\documentclass[11pt]{amsart}

\usepackage[T1]{fontenc}
\usepackage[utf8]{inputenc}
\usepackage{lmodern}
\usepackage{amsmath,amssymb,amsthm,mathtools}
\usepackage{microtype}
\usepackage[margin=1.1in]{geometry}
\usepackage{enumitem}
\usepackage[hidelinks]{hyperref}
\usepackage{cleveref}

\theoremstyle{plain}
\newtheorem{theorem}{Theorem}[section]
\newtheorem{proposition}[theorem]{Proposition}
\newtheorem{lemma}[theorem]{Lemma}
\newtheorem{corollary}[theorem]{Corollary}
\theoremstyle{definition}
\newtheorem{definition}[theorem]{Definition}
\newtheorem{remark}[theorem]{Remark}
\newtheorem{example}[theorem]{Example}

\usepackage[dvipsnames,svgnames,table]{xcolor}
\definecolor{NiceBlue}{rgb}{0.2, 0.25, 0.8}
\newcommand{\struc}[1]{{\relax\color{NiceBlue}{\textit{#1}}}}

\newcommand{\M}{\mathcal{M}}

\newcommand{\im}{\operatorname{im}}
\newcommand{\rank}{\operatorname{rank}}
\newcommand{\Hess}{\operatorname{Hess}}
\newcommand{\Log}{\operatorname{Log}}
\newcommand{\Exp}{\operatorname{Exp}}

\newcommand{\dd}{\mathrm d}

\newcommand{\ip}[2]{\left\langle #1,#2\right\rangle}
\newcommand{\norm}[1]{\left\lVert #1\right\rVert}
\newcommand{\Isom}{\textsc{Isom}}
\newcommand{\R}{\mathbb{R}}
\newcommand{\Sph}{\mathbb{S}}
\newcommand{\cK}{\mathcal{K}}
\newcommand{\cO}{\mathcal{O}}

\title[Intrinsic Rigidity on Riemannian Manifolds]{Intrinsic Rigidity of Bar-and-Joint Frameworks on Riemannian Manifolds}
\author[Matthias Adrian-Himmelmann]{Matthias Adrian-Himmelmann$^\dagger$}
\thanks{$^\dagger$Institut für Analysis und Algebra, Universität Braunschweig, Germany.\\ Email: \href{mailto:matthias.himmelmann@tu-braunschweig.de}{matthias.himmelmann@tu-braunschweig.de}}
\date{}

\begin{document}

\begin{abstract}
We develop an intrinsic rigidity theory for bar-and-joint frameworks whose vertices are constrained to a Riemannian manifold and whose edges are given by minimizing geodesic arcs. Such frameworks are called rigid if they cannot be deformed beyond the ambient isometries, while preserving the geodesic edge lengths. For admissible placements, for which adjacent vertices are joined by unique minimizing geodesics, we define a coordinate-free rigidity operator as the differential of the squared geodesic distance of the edges. By the first-variation formula, its edge rows are expressed in terms of the Riemannian logarithm, providing an intrinsic analogue of the classical Euclidean rigidity matrix. Trivial motions are characterized by orbits of the diagonal action of the isometry group, leading to notions of infinitesimal and local rigidity. We further develop a covariant second-order rigidity theory using the Riemannian Hessians of the edge constraints and characterize second-order rigidity through self-stresses and associated quadratic forms. Finally, we show that infinitesimal and second-order rigidity can be used as sufficient conditions for the local rigidity of an intrinsic framework. Our overarching goal is to provide a unified differential-geometric formulation of intrinsic framework rigidity on smooth manifolds.
\end{abstract}
\maketitle

\noindent\textbf{Keywords.} intrinsic bar-and-joint frameworks, Riemannian manifold, geodesic distance, rigidity matrix, rigidity theory, second-order rigidity.

\medskip
\noindent\textbf{2020 Mathematics Subject Classification.} 52C25, 53B20, 58D19.

\section{Introduction}

Classical bar-and-joint rigidity studies a finite graph $G=(V,E)$ together with a placement of its vertices in Euclidean space.  Each edge represents a fixed-distance constraint. A framework is called rigid if these distance constraints do not permit deformations beyond the ambient isometries. This problem has many applications, ranging from soft-matter physics, across biochemistry to structural engineering. 
However, determining the rigidity of a bar-and-joint framework is computationally challenging \cite{abbott}, which in the past has promoted the development of efficiently verifiable sufficient conditions.

Beyond Euclidean space, the rigidity of bar-and-joint frameworks is studied in various other settings. For instance, frameworks whose vertices are constrained to surfaces in Euclidean space, while the bars retain ambient Euclidean lengths, are treated in \cite{NixonOwenPower2012}. First-order rigidity concepts in special Euclidean, spherical, and hyperbolic geometries are understood \cite{SaliolaWhiteley2007}. In \cite{dewarnormed,dewarnormed2}, the local, infinitesimal and even second-order rigidity theory are developed in normed spaces over the real numbers using Lie groups. There also exists a formulation of continuous and infinitesimal rigidity in metric and multiple-distance settings which includes several normed spaces \cite{cylindricalnormspaces,normedspaces} and geodesic distance constraints on specific surfaces \cite{NixonPower2020}. Moreover, with the help of hyperbolic symmetry groups, the rigidity on compact Riemann surfaces has been studied \cite{dewar2026rigiditycompactsurfaceshyperbolic}. In a more general setting, the rigidity for special types of polynomial maps is investigated. This relates to geometric constraint systems \cite{geometric_constraint_system} and $g$-rigidity \cite{g-rigidity}. 

With this article we strive to create a general theory which unifies these cases. By placing the framework's vertices on a Riemannian manifold and assuming that the edges are given by minimizing geodesic arcs, we are able to study framework rigidity in an intrinsic and natural setting. We formulate the intrinsic Riemannian differential-geometric theory in a coordinate-free form, which is valid on an arbitrary Riemannian manifold. 

This setting is far from artificial. Not only can we understand and generalize the rigidity of frameworks in contexts that have been previously described, but this approach also allows us to study the broader class of geometric constraint systems whose vertices are constrained to a smooth manifold and whose edges come with a smoothly varying measurement. This is, for instance, the case for 3-polytopes with edge length and facet planarity constraints, where the polytopal realization space forms a smooth manifold and the edges are equipped with a Euclidean distance constraint \cite{polytoperigidity1,polytoperigidity2}. 

In order to define rigidity for frameworks on Riemannian manifolds, the trivial deformations need to be factored out. For intrinsic bar-and-joint frameworks, the trivial motions are given by orbits of the connected component of the isometry Lie group that contains the identity. If beyond these orbits, there are no other sufficiently close realizations of the framework, then we call the framework rigid and otherwise flexible. The distinction between local and global isometries is important on general Riemannian manifolds, since local isometries need not extend to global ones. Consequently, deformations that locally preserve all pairwise distances may still be considered nontrivial if they are not induced by global isometries. Our choice provides a proper Lie-group action, which we extensively use throughout our theoretical results.

In the Euclidean setting, the infinitesimal theory is developed systematically by Asimow and Roth \cite{AsimowRoth1978,AsimowRoth1979}. The authors define the rigidity matrix as the differential of the map of squared edge lengths. The rigidity operator given by the differential of the squared geodesic edge lengths is therefore the natural analogue to the rigidity matrix on Riemannian manifolds. We can study its kernel to develop an infinitesimal rigidity theory for intrinsic frameworks on Riemannian manifolds that serves as a simple sufficient condition for the framework's rigidity. 

However, infinitesimal rigidity is not a necessary condition. For that reason, we develop a second-order theory (e.g.\ \cite{connelly1996}) for the rigidity of intrinsic frameworks. In total, we obtain the following implications for the rigidity of intrinsic bar-and-joint frameworks, which are completely analogous to the Euclidean setting.
\[
\boxed{
	\text{infinitesimally rigid}
	\Longrightarrow
	\text{prestress stable}
	\Longrightarrow
	\text{second-order rigid}
	\Longrightarrow
	\text{locally rigid}.
}
\]

\subsection{Contributions}
In this article, we develop a coordinate-free formulation of intrinsic bar-and-joint rigidity on smooth Riemannian manifolds. Unlike frameworks constrained to embedded surfaces with ambient Euclidean edge lengths \cite{NixonOwenPower2012} or special surfaces with induced geodesic distances that require an individual treatment \cite{NixonPower2020,SaliolaWhiteley2007}, our framework constraints are given by intrinsic Riemannian distances on arbitrary Riemannian manifolds. This allows us to treat manifolds with general Riemannian metrics without requiring an ambient Euclidean embedding or a global linear structure, as is available in the normed-space settings of \cite{dewarnormed,dewarnormed2}. Our main contribution is to establish an intrinsic differential-geometric notion of classical first- and second-order rigidity theory on Riemannian manifolds, with particular emphasis on the explicit dependence of the rigidity operator and second-order stress forms on the existence of geodesics.

First, we construct an intrinsic rigidity operator as the differential of the squared geodesic-distance constraint map (Definition \ref{def:rigidity-operator}). Using the first-variation formula, we obtain an explicit representation of its edge rows in terms of the Riemannian logarithm (Proposition \ref{prop:edge-row}). This provides a coordinate-independent analogue of the Euclidean rigidity matrix and permits us to characterize infinitesimal rigidity modulo the infinitesimal action of the isometry group (Definition \ref{def:inf_rigidity}). We derive a Maxwell-type necessary count (Corollary \ref{cor:maxwell}) and establish the relationship between infinitesimal and local rigidity: the two notions are equivalent at regular placements (Proposition \ref{prop:regular-equivalence}), while infinitesimal rigidity implies local rigidity without a regularity assumption (Theorem \ref{thm:inf-rigid}). We further investigate the relationship between flexibility and continuous motions, proving the existence of nontrivial smooth motions at regular placements (Theorem \ref{thm:existenceofcontmotion}) and nontrivial real-analytic motions on real-analytic Riemannian manifolds (Theorem \ref{thm:existencecontinuousmotionsrealanalytic}).

Second, we develop a covariant second-order rigidity theory that accounts for the variation of tangent spaces and the Riemannian metric. The second-order edge constraints are expressed through Riemannian Hessians and covariant accelerations (Proposition \ref{prop:curve-second-derivative}). In particular, we derive an intrinsic representation of the edge Hessians using Jacobi fields along the minimizing geodesics (Proposition \ref{prop:riemannian-edge-hessian}), making the dependence of the second-order theory on the Riemannian geometry explicit. We then introduce self-stresses and their associated quadratic forms (Definition \ref{def:self-stress}), prove that these forms descend to the space of infinitesimal flexes modulo trivial motions (Lemma \ref{lem:radical}), and obtain an exact characterization of second-order rigidity through self-stresses (Theorem 5.8 and Corollary 5.9). This yields intrinsic notions of prestress stability and second-order rigidity, together with the implication hierarchy from infinitesimal rigidity to local rigidity (Theorems \ref{thm:inf-prestress}--\ref{thm:so-local}). Finally, we recover the familiar Euclidean, spherical, and hyperbolic rigidity operators as special cases and illustrate the second-order theory through frameworks on manifolds equipped with both standard and nonstandard Riemannian metrics (Sections \ref{section:specialization-examples}).

Our approach builds on the differential approach from classical rigidity theory and its extensions to normed spaces \cite{dewarnormed,dewarnormed2}. Several of the theoretical implications established in this article, particularly the relationships between infinitesimal, second-order, and local rigidity, reflect general principles for smooth constraint maps invariant under proper Lie-group actions. The main novelty therefore lies not in these implications themselves, but in their intrinsic realization for geodesic-distance constraints on arbitrary Riemannian manifolds. More precisely, we identify the differential of the squared geodesic-distance map through the Riemannian logarithm and express its second-order derivatives in terms of the Riemannian Hessian and Jacobi fields along minimizing geodesics. These formulas make the dependence of rigidity and prestress stability on the underlying Riemannian geometry explicit, without requiring an ambient Euclidean embedding or a global linear structure. Furthermore, by defining trivial motions through the global isometry group, our formulation accounts for the distinction between local and global isometries, which becomes relevant on general Riemannian manifolds. The resulting framework thus provides a unified geometric setting in which classical rigidity criteria can be formulated intrinsically and applied to manifolds with general Riemannian metrics.

\section{Riemannian Manifolds}\label{section:rie_man}

In this section, we provide a readable overview of the concepts from Riemannian geometry that are relevant for this article, without giving too many details. For technical derivations and results, see the Appendix Section \ref{section:smooth-manifolds} on smooth manifolds and Section \ref{app:riemannian manifolds} on Riemannian manifolds, which are supposed to provide sufficient background knowledge for this paper. Alternatively, standard textbook references are \cite{boumal2020intromanifolds, JostRieGeo, kobayashinomizu, Lee2018, Lee-manifolds}.

 Given a smooth manifold $\M$, we can associate the \struc{tangent space} $T_p\M$ to each point $p\in \M$, which is given by equivalence classes of smooth curves on $\M$ through $p$ (Definition \ref{def:tangent-space-and-local-frame}). We can select smooth tangent vector fields along $\M$ that define a smoothly varying basis for each tangent space $T_p\M$, which we call a \struc{local frame} (Definition \ref{def:local_frame}). Given a smooth function $f:\M\rightarrow \mathbb{R}$, the \struc{differential} $df_p:T_p\M\rightarrow \mathbb{R}$ of $f$ at $p$ provides a natural differentiable structure on smooth manifolds (Definition \ref{def:differential}).

Riemannian geometry provides the framework for studying intrinsic distances, angles and areas on smooth manifolds. A pair $(\M,g)$ consisting of a smooth manifold $\M$ and a smoothly-varying bilinear form $g$ is called a \struc{Riemannian manifold} (Definition \ref{def:riemannian-manifold}). The \struc{Riemannian metric} $g$ is defined at every point $p\in\M$ as an inner product $g_p:T_p\M\times T_p\M\rightarrow \mathbb{R}$ on the tangent space $T_p\M$ of $\M$. We can associate a \struc{Riemannian norm} $\|x\|_p=\sqrt{g_p(x,x)}$ with it. Every smooth manifold admits a Riemannian metric \cite[Prop.\ 2.4]{Lee2018}, and embedded submanifolds inherit a Riemannian metric through the pullback of the ambient metric (Definiton \ref{def:riemannian-submanifold}).

We consider smooth curves $\gamma:[a,b]\rightarrow \M$ on Riemannian manifolds, which come with the canonical \struc{velocity vector field} $\gamma'(t)\in T_{\gamma(t)}\M$. This construction provides an alternative characterization of the tangent space at $p$ in terms of tangents of smooth curves through $p$ (Proposition \ref{prop:tangent_space}). The metric uniquely determines the \struc{Levi-Civita connection} $\nabla$, which allows for covariant differentiation of vector fields. The covariant derivative also lets us define \struc{geodesics} as curves $\gamma$ which have zero instrinsic acceleration: $\nabla_{\gamma'}\gamma'=0$ (Definition \ref{def:geodesics}). The \struc{Riemannian distance} (Definition \ref{def:riemannian-distance}) between two points is given by the infimum of the lengths of admissible curves $\gamma$:
\[
\ell_g(\gamma(a),\,\gamma(b))=\inf_{\gamma}\int_a^b\|\gamma'(t)\|_{\gamma(t)} \,dt.
\]
Length-minimizing curves are geodesics, although geodesics need not be globally minimizing \cite[Thm.\ 6.4 and 6.15]{Lee2018}. 

The exponential map $\mathrm{Exp}_p:T_p\M\rightarrow \M$ associates tangent vectors in $T_p\M$ with endpoints of geodesics starting at the point $p$ (Definition \ref{def:exponential-map}). For tangent vectors within the \struc{injectivity radius} $\mathrm{inj}(p)$ (Definition \ref{def:injectivity-radius}), this map is invertible, giving rise to the \struc{Riemannian logarithm} $\mathrm{Log}_p=\left(\Exp_p\right)^{-1}:\mathcal{U}\rightarrow T_p\M$ for a sufficiently small neighborhood $\mathcal{U}\subset \M$ of $p$ (Definition \ref{def:riemannian-logarithm}) and the relation $\ell_g(p,q)=\|\mathrm{Log}_p(q)\|_p$ \cite[Prop.\ 10.22]{boumal2020intromanifolds}. \struc{Parallel transport} provides metric-preserving identifications of tangent spaces along curves (Definition \ref{def:parallel-transport}), while \struc{Jacobi fields} describe geodesic variations and their dependence on curvature (Definition \ref{def:jacobi-field}). These objects will prove useful when also considering second-order information on Riemannian manifolds, such as the \struc{Riemannian Hessian} (Definition \ref{def:hessian}).

Since they define the trivial modes of geometric objects on Riemannian manifolds, \struc{isometries} are particularly important for the rigidity of instrinsic frameworks on Riemannian manifolds. Namely, isometries are given by diffeomorphisms $\varphi:(\mathcal{M},g)\to(\mathcal{M},g)$ satisfying $\varphi^*g=g$ (Definition \ref{def:isometries}). Such maps preserve intrinsic distances and therefore provide a natural notion of congruence on Riemannian manifolds. The \struc{isometry group} $\mathrm{Isom}(\mathcal{M},g)$ is a Lie group \cite{MyersSteenrod1939}, whose Lie algebra $\mathfrak{isom}(\mathcal{M},g)$ describes infinitesimal isometries and has dimension at most $n(n+1)/2$ for $n=\dim\mathcal{M}$ \cite[Thm.\ 3.3]{kobayashinomizu}. Its elements $A\in\mathfrak{isom}(\M,g)$ generate smooth one-parameter families of isometries $\Phi_t^A$, providing the geometric foundation for distinguishing genuine deformations from motions induced by the ambient isometry group. For a connected Riemannian manifold, the group action of the Lie group of isometries on $\M$ is proper, which is a property that we rely heavily on throughout this article. For this reason, we will from now on assume that any Riemannian manifold that we encounter is connected (or that the vertices of a framework exclusively lie on a connected component of $\M$).

\section{Rigidity of intrinsic frameworks on Riemannian manifolds}\label{section:inf_theory}

Let $G=(V,E)$ be a finite simple graph with vertices $V=\{1,\dots,n\}$. We embed this graph on a Riemannian manifold $(\M,g)$ so that the vertices are constrained to $\M$ and the edges form minimizing geodesic arcs subject to the Riemannian metric $g$.

\begin{definition}[Intrinsic Riemannian framework]
An \struc{intrinsic Riemannian framework} is a pair $(G,p)$, where
\[
 p=(p_1,\dots,p_n)\in \M^n
\]
is a placement of the vertices, which we call a \struc{realization}.  We call the graph's edges the \struc{bars} of the framework. For each edge $ij\in E$, the bar length is given by the intrinsic Riemannian distance $\ell_g(p_i,p_j)$. 
\end{definition}

Thus, the bars constrain intrinsic distance, rather than the distance of an ambient embedding, as is the case in \cite{NixonOwenPower2012}.  An edge need not be regarded as an additional geometric object.  When a unique minimizing geodesic joins $p_i$ to $p_j$, that geodesic provides the intrinsic representative of the bar and is the object that appears in the first-variation formula.

Recall the set $\mathcal{U}_\mathrm{Log}$ consisting of points on $\M\times \M$ whose Riemannian distance is bounded above by the injectivity radius (see Equation \eqref{eq:U_log}).
On $\mathcal U_{ \mathrm{Log}}$, the Riemannian logarithm $\Log_x(y)$ is the initial velocity at $x$ of the unique minimizing geodesic from $x$ to $y$, parametrized on $[0,1]$. Hence,
\[
 \|\Log_x(y)\|_x=\ell_g(x,y) \quad \text{ and }\quad
 \Exp_x(\Log_x(y))=y
\]
(see \cite[Prop.\ 10.22]{boumal2020intromanifolds}). For that reason, the squared distance function $\ell_g(x,y)^2$ is smooth, since it is always attained by the unique minimizing geodesic between $x$ and $y$, which varies smoothly according to the exponential map.

In order to ensure that there exists a unique minimizing geodesic arc between any two vertices of the framework, we use this observation to introduce an additional assumption on the realization.

\begin{definition}[Admissible placement]\label{def:admissibility}
A placement $p\in \M^n$ is \struc{admissible for $G$} if
\[
 (p_i,p_j)\in\mathcal{U}_{\mathrm{Log}}\quad\text{ and }\quad (p_j,p_i)\in\mathcal{U}_{\mathrm{Log}}
 \qquad\text{for every }ij\in E.
\]
\end{definition}
Note that we require that both edge directions lie in the domain $\mathcal{U}_{\mathrm{Log}}$ where the Riemannian logarithm is defined, since we intend to compute both, $\Log_{p_i}(p_j)$ and $\Log_{p_j}(p_i)$. Alternatively, we could require the end points of edges in admissible placements to lie in the complement of the \struc{cut locus} of each other \cite[p.\ 310f.]{Lee2018}. The preimage of this set under the exponential map exactly captures the maximum region in the tangent bundle, where the exponential map is a diffeomorphism. However, its definition is quite technical, so we use the more natural injectivity radius instead. 

For an admissible placement define the \struc{constraint map}
\begin{equation*}
 F_G:\mathcal C_G\longrightarrow \R^{E},
 \qquad
 (F_G(q))_{ij}=\frac12 \ell_g(q_i,q_j)^2,
\end{equation*}
where $\mathcal C_G\subset \M^n$ is the open set of admissible placements $p$ for $G$. The \struc{realization space} of a framework $(G,p)$ then is the collection of all admissible placements that have the same edge lengths, i.e.\ ${F_G}^{-1}({F_G}(p))$.

\begin{remark}
One may instead use the unsquared edge-distance map $q\mapsto \ell_g(q_i,q_j)$ whenever adjacent vertices are distinct.  At such a placement, the two differentials differ edgewise by multiplication by the nonzero factor $\ell_g(p_i,p_j)$.  Therefore the two choices have the same kernel and the same rank.  The squared-distance normalization is more convenient because its gradient is the logarithm vector itself.
\end{remark}

\subsection{The Riemannian rigidity operator}

The tangent space of the configuration manifold is
\[
 T_p\M^n=\bigoplus_{i=1}^n T_{p_i}\M.
\]
In other words, any tangent vector of $\M^n$ decomposes as follows:
\[
 u=(u_1,\dots,u_n),
 \qquad u_i\in T_{p_i}\M.
\]
To investigate the rigidity of an intrinsic bar-and-joint framework, we take a look at the differential of the constraint map. It gives rise to an analogue of the rigidity matrix.

\begin{definition}[Intrinsic Riemannian rigidity operator]\label{def:rigidity-operator}
For an admissible framework $(G,p)$, the \struc{intrinsic Riemannian rigidity operator} is
\begin{equation*}\label{eq:rigidity-operator}
 R_\M(G,p):=d(F_G)_p:
 \bigoplus_{i=1}^n T_{p_i}\M\longrightarrow \R^E.
\end{equation*}
\end{definition}

The following formula provides a linear and edge-wise representation of the rigidity operator. 

\begin{proposition}[Edge row formula]\label{prop:edge-row}
Let $(G,p)$ be admissible.  For $u=(u_1,\dots,u_n)\in T_p\M^n$ and $ij\in E$,
\begin{eqnarray}\label{eq:row-formula}
 \bigl(R_\M(G,p)u\bigr)_{ij}
 &=&-\bigl\langle \Log_{p_i}(p_j),u_i\bigr\rangle_{p_i}
  -\bigl\langle \Log_{p_j}(p_i),u_j\bigr\rangle_{p_j}.
\end{eqnarray}
\end{proposition}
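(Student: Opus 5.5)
Since $(F_G)_{ij}(q)=\tfrac12\ell_g(q_i,q_j)^2$ depends only on $(q_i,q_j)$, the $ij$-th component of $d(F_G)_p u$ equals the differential of $f(x,y)=\tfrac12\ell_g(x,y)^2$ at $(p_i,p_j)$ applied to $(u_i,u_j)$. I will split this by linearity into $d_xf(u_i)+d_yf(u_j)$, the partial differentials in each slot, which is legitimate on the product manifold $\M\times\M$. By symmetry $f(x,y)=f(y,x)$, and since admissibility puts both $(p_i,p_j)$ and $(p_j,p_i)$ in $\mathcal U_{\mathrm{Log}}$, it suffices to show that for $(x,y)\in\mathcal U_{\mathrm{Log}}$ the Riemannian gradient of $x\mapsto\tfrac12\ell_g(x,y)^2$ is $-\Log_x(y)$. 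Applying this once with $(x,y)=(p_i,p_j)$ and once with the roles swapped gives \eqref{eq:row-formula}.

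To compute this gradient, fix $y$ and take a smooth curve $c(s)$ with $c(0)=x$, $c'(0)=v$. For small $s$ the pair $(c(s),y)$ remains in the open set $\mathcal U_{\mathrm{Log}}$. Hence $\gamma_s(t)=\Exp_{c(s)}\bigl(t\Log_{c(s)}(y)\bigr)$, $t\in[0,1]$, is the unique minimizing geodesic from $c(s)$ to $y$, and it depends smoothly on $s$ by smoothness of $\Log$ on $\mathcal U_{\mathrm{Log}}$. Moreover $\tfrac12\ell_g(c(s),y)^2=E(\gamma_s)$, where $E(\gamma)=\tfrac12\int_0^1\|\gamma'\|^2dt$ is the energy, because a geodesic on $[0,1]$ has constant speed equal to its length. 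The first variation formula for energy, with variation field $V(t)=\partial_s\gamma_s(t)|_{s=0}$ satisfying $V(0)=v$ and $V(1)=0$, gives
\[
\frac{d}{ds}\Big|_{s=0}E(\gamma_s)=\bigl[\langle V,\gamma_0'\rangle\bigr]_0^1-\int_0^1\langle V,\nabla_{\gamma_0'}\gamma_0'\rangle\,dt.
\]
The integral vanishes since $\gamma_0$ is a geodesic, and the boundary term equals $-\langle v,\gamma_0'(0)\rangle_x=-\langle v,\Log_x(y)\rangle_x$. This is the desired formula.

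The main point needing care is the justification that the variation $\gamma_s$ is smooth in $(s,t)$ and genuinely realizes the distance for each $s$. This rests on $\mathcal U_{\mathrm{Log}}$ being open and on $\Log$ being smooth there, which the paper takes from the appendix and \cite{boumal2020intromanifolds}. The remaining steps are the standard first-variation computation, so I do not expect further obstacles.
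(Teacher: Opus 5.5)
Your proof is correct and follows the same route as the paper: split $d(F_G)_p u$ into the two partial differentials of $\tfrac12\ell_g(x,y)^2$, and use $\mathrm{grad}_x\tfrac12\ell_g(x,y)^2=-\Log_x(y)$ together with symmetry and admissibility for the second slot. The only difference is that the paper cites this gradient identity from \cite{Karcher1977}, while you derive it yourself from the first variation of energy along the smooth family $\gamma_s(t)=\Exp_{c(s)}\bigl(t\Log_{c(s)}(y)\bigr)$, which is valid on the open set $\mathcal{U}_{\mathrm{Log}}$.
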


\begin{proof}
For fixed $y\in\M$ with $(x,y)\in \mathcal{U}_\mathrm{Log}$, the corresponding Riemannian logarithm is well-defined, smooth and the first variation of squared distance gives
\[
 \mathrm{grad}_x\frac12 \ell_g(x,y)^2=-\Log_x(y)
\]
(see \cite[Thm.\ 1.2]{Karcher1977}). Applying this identity to both variables of $\frac12\ell_g(q_i,q_j)^2$ and evaluating on $(u_i,u_j)$ yields Equation \eqref{eq:row-formula}.
\end{proof}

\begin{remark}
A particularly convenient representation of the rigidity operator is in the form of a matrix. For that, let $d=\dim \M$ and choose an orthonormal basis $e_{i1},\dots,e_{id}$
of $T_{p_i}\M$ for each vertex $i$. We express
\[
 \Log_{p_i}(p_j)=\sum_{k=1}^d \lambda_{ij}^{(k)}e_{ik}
\]
in terms of the chosen orthonormal basis. In the product basis of $T_p\M^n$, the operator $R_\M(G,p)$ is represented by an $|E|\times dn$ matrix.  The row indexed by $ij$ has only two nonzero $d$-blocks:
\begin{eqnarray*}
 \begin{pmatrix}
  0&\cdots&-(\lambda_{ij})^{\mathsf T}&\cdots
  &-(\lambda_{ji})^{\mathsf T}&\cdots&0
 \end{pmatrix}.
\end{eqnarray*}
Changing the orthonormal frame at a vertex right-multiplies the corresponding block column by an orthogonal matrix.  Thus the kernel dimension and rank are independent of these choices.
\end{remark}

\subsection{Infinitesimal flexes and trivial motions}\label{section:inf_flexes}
In classical rigidity theory, investigating the kernel of the rigidity operator gives rise to a computationally-tractable sufficient condition for the rigidity of the corresponding bar-and-joint framework. A similar construction also works in the intrinsic Riemannian setting. 

\begin{definition}[Infinitesimal flex]
An \struc{infinitesimal flex} of $(G,p)$ is a vector
\[
 u\in\ker R_\M(G,p).
\]
Equivalently, an infinitesimal flex $u$ satisfies the following equation for all edges $ij\in E$:
\begin{equation}\label{eq:edge-equation}
	\bigl\langle \Log_{p_i}(p_j),u_i\bigr\rangle_{p_i}
	+\bigl\langle \Log_{p_j}(p_i),u_j\bigr\rangle_{p_j}=0.
\end{equation}
\end{definition}

There is a useful equivalent expression in terms of parallel transport.  Let $\mathcal{T}_{i\to j}:T_{p_i}\M\to T_{p_j}\M$ denote the parallel transport along the unique minimizing geodesic from $p_i$ to $p_j$ (see \cite[p.\ 105f.]{Lee2018}).  Reversing that geodesic gives
\begin{eqnarray*}
	\Log_{p_j}(p_i)&=&-\mathcal{T}_{i\to j}\Log_{p_i}(p_j).
\end{eqnarray*}
Hence, Equation \eqref{eq:edge-equation} is equivalent to
\begin{eqnarray*}
	\left\langle
	\Log_{p_i}(p_j),~
	u_i-\mathcal{T}_{j\to i}u_j
	\right\rangle_{p_i}&=&0.
\end{eqnarray*}
This equation is the intrinsic analogue of the Euclidean statement that the relative velocity of the endpoints of an edge is orthogonal to the bar's direction.

At every point of $\M^n$, there are natural infinitesimal flexes which are generated by ambient isometry of $\M$, which is why they are called ``trivial''. Let $\mathfrak{isom}(\M,g)$ denote the Lie algebra of $\Isom(\M,g)$. For any element $A\in \mathfrak{isom}(\M,g)$, there exists a unique one-parameter subgroup $\Phi^A_t$ of $\textsc{Isom}(\M,g)$ that is given by the exponential map. The evaluation at the framework vertices defines
\[
 \operatorname{ev}_p:\mathfrak{isom}(\M,g)
 \longrightarrow T_p\M^n,
 \qquad
 A\longmapsto \left(A(p_1),\,\dots,\,A(p_n)\right).
\]
We define the set of \struc{trivial infinitesimal flexes $\cK_p$} as $\operatorname{im}(\operatorname{ev}_p)$. This lets us formulate the following proposition.

\begin{proposition}\label{prop:killing-kernel}
For every admissible framework $(G,p)$,
\[
 \cK_p\subseteq \ker R_\M(G,p).
\]
\end{proposition}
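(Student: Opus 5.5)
The plan is to show that every trivial flex $u=\operatorname{ev}_p(A)$ is the velocity at $t=0$ of a curve in $\mathcal C_G$ along which $F_G$ is constant; then $R_\M(G,p)u=d(F_G)_p u$ is the derivative of a constant function and must vanish. The curve is the diagonal orbit of the one-parameter subgroup generated by $A$.

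\textbf{Step 1: the orbit curve.} Fix $A\in\mathfrak{isom}(\M,g)$ with one-parameter subgroup $\Phi^A_t$, and set $c(t)=(\Phi^A_t(p_1),\dots,\Phi^A_t(p_n))\in\M^n$. The action map $\Isom(\M,g)\times\M\to\M$ is smooth by the Myers--Steenrod theorem, and $A(p_i)=\frac{d}{dt}\big|_{t=0}\Phi^A_t(p_i)$ by definition of the vector field induced by $A$. Hence $c$ is smooth, $c(0)=p$, and $c'(0)=\operatorname{ev}_p(A)=u$.

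\textbf{Step 2: the curve stays admissible.} Each $\Phi^A_t$ is an isometry, so it preserves the distance $\ell_g$. It also preserves the injectivity radius, since $\mathrm{inj}(\varphi(x))=\mathrm{inj}(x)$ for any isometry $\varphi$, because $\varphi$ intertwines the exponential maps. Consequently, $(p_i,p_j)\in\mathcal U_{\mathrm{Log}}$ implies $(\Phi^A_t p_i,\Phi^A_t p_j)\in\mathcal U_{\mathrm{Log}}$, and the same holds for the reversed pair. So $c(t)\in\mathcal C_G$ for all $t$. Even without invariance of $\mathrm{inj}$, $\mathcal C_G$ is open and $c$ is continuous, so $c(t)\in\mathcal C_G$ for small $|t|$, which suffices.

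\textbf{Step 3: differentiate.} For each edge $ij$, $(F_G(c(t)))_{ij}=\frac12\ell_g(\Phi^A_t p_i,\Phi^A_t p_j)^2=\frac12\ell_g(p_i,p_j)^2$, which is constant in $t$. By the chain rule, $R_\M(G,p)u=\frac{d}{dt}\big|_{t=0}F_G(c(t))=0$, so $u\in\ker R_\M(G,p)$ and $\cK_p\subseteq\ker R_\M(G,p)$.

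\textbf{Alternative check via the edge row formula.} One can also verify \eqref{eq:edge-equation} directly with Proposition~\ref{prop:edge-row}. The vector field $A$ is Killing, and along the minimizing geodesic $\gamma$ from $p_i$ to $p_j$ the function $\langle A,\gamma'\rangle$ is constant, since $\frac{d}{dt}\langle A,\gamma'\rangle=\langle\nabla_{\gamma'}A,\gamma'\rangle=0$. With $\gamma'(0)=\Log_{p_i}(p_j)$ and $\gamma'(1)=-\Log_{p_j}(p_i)$, this gives exactly \eqref{eq:edge-equation}.

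\textbf{Main obstacle.} The only delicate point is justifying $\frac{d}{dt}\Phi^A_t(p_i)|_{0}=A(p_i)$, i.e.\ the identification of the Lie algebra with complete Killing fields and the smoothness of the action. I will take this from Myers--Steenrod as recalled in Section~\ref{section:rie_man}.
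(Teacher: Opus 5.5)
Your proposal is correct and follows the paper's own route: push $p$ along the diagonal orbit of the one-parameter subgroup $\Phi^A_t$, note that $F_G$ is constant along it because isometries preserve $\ell_g$, and differentiate at $t=0$ using $\frac{d}{dt}\Phi^A_t\big|_{t=0}=A$. Your Step~2 (the orbit curve stays in $\mathcal C_G$) and the Killing-field check are correct additions that make explicit a point the paper's proof leaves implicit.
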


\begin{proof}
Consider the unique one-parameter family of isometries $\Phi^A_t\in \textsc{Isom}(\M,g)$ induced by some $A\in \mathfrak{isom}(\M,g)$ (see \Cref{section:rie_man}).  Since $\Phi^A_t$ is an isometry for each $t$, it preserves Riemannian distance, so
\[
 \ell_g(\Phi^A_t(p_i),\,\Phi^A_t(p_j))=\ell_g(p_i,\,p_j)
\]
for every pair of edges $i,j$.  Differentiating at $t=0$ shows that the vector $\mathrm{ev}_p(A)$
lies in the kernel of the rigidity operator, since $\frac{\partial}{\partial t} \Phi^A_t\big\lvert_{t=0}=A$. In other words, the tangent vectors obtained by differentiating ambient isometries always lie in the kernel of the intrinsic rigidity matrix.
\end{proof}

\begin{definition}[Infinitesimal rigidity]\label{def:inf_rigidity}
An admissible framework $(G,p)$ is \struc{infinitesimally rigid} if
\[
 \ker R_\M(G,p)=\cK_p.
\]
It is \struc{infinitesimally flexible} otherwise.
\end{definition}

Consequently, if $t_p:=\dim\cK_p$, then infinitesimal rigidity is equivalent to
\begin{equation}\label{eq:rank-criterion}
 \rank R_\M(G,p)=dn-t_p.
\end{equation}
The quantity $t_p$ is the dimension of the isometry orbit through the placement. Denote by $\textsc{Isom}_{\mathrm{id}}(\M,g)\subset \textsc{Isom}(\M,g)$ the connected component of $\textsc{Isom}(\M,g)$ containing the identity element $\mathrm{id}\in \textsc{Isom}(\M,g)$ and call it the \struc{identity component}. If the identity component of $\Isom(\M,g)$ has dimension $k$ and the stabilizer of the entire placement is discrete, then $t_p=k$.

\begin{corollary}[Maxwell-type necessary count]\label{cor:maxwell}
If $(G,p)$ is infinitesimally rigid, then
\begin{eqnarray*}
 |E|\ge dn-t_p.
\end{eqnarray*}
In particular, if $\dim\Isom_{\mathrm{id}}(\M,g)=k$ and the placement has discrete stabilizer, then
\begin{eqnarray*}
 |E|\ge dn-k.
\end{eqnarray*}
\end{corollary}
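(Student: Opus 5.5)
The first claim follows by rank–nullity together with the trivial bound on the rank of a linear map into $\R^E$. By Definition~\ref{def:rigidity-operator}, $R_\M(G,p)$ is a linear map from $T_p\M^n=\bigoplus_{i=1}^n T_{p_i}\M$, of dimension $dn$, into $\R^E$. Hence
\[
\rank R_\M(G,p)\le \min\{dn,\;|E|\}\le |E|.
\]
If $(G,p)$ is infinitesimally rigid, then $\ker R_\M(G,p)=\cK_p$. Rank–nullity then gives $\rank R_\M(G,p)=dn-\dim\cK_p=dn-t_p$, which is exactly \eqref{eq:rank-criterion}. Combining this with the bound above yields $|E|\ge dn-t_p$.

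For the second claim it suffices to show that $t_p=k$ when $\dim\Isom_{\mathrm{id}}(\M,g)=k$ and the stabilizer of $p$ is discrete. Here $\cK_p=\im(\operatorname{ev}_p)$, where $\operatorname{ev}_p$ is the evaluation map on $\mathfrak{isom}(\M,g)$, a Lie algebra of dimension $k$ since the Lie algebra of a Lie group coincides with that of its identity component.

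The plan is to identify $\ker\operatorname{ev}_p$ with the Lie algebra of the stabilizer of $p$ under the diagonal action of $\Isom(\M,g)$ on $\M^n$. Concretely, $\operatorname{ev}_p$ is the differential at the identity of the orbit map $\phi\mapsto(\phi(p_1),\dots,\phi(p_n))$. Suppose $A(p_i)=0$ for all $i$. Then each $p_i$ is a zero of the complete vector field $A$ generating $\Phi^A_t$, so $\Phi^A_t(p_i)=p_i$ for all $t$, by uniqueness of integral curves. Thus $\Phi^A_t$ lies in the stabilizer for every $t$. If the stabilizer is discrete, the continuous curve $t\mapsto\Phi^A_t$ through the identity must be constant, so $\Phi^A_t=\mathrm{id}$ and hence $A=0$.

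It follows that $\operatorname{ev}_p$ is injective, so $t_p=\dim\im(\operatorname{ev}_p)=k$, and substituting into the first claim gives $|E|\ge dn-k$.

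The only step requiring care is this injectivity argument: one must use that elements of $\mathfrak{isom}(\M,g)$ act as complete Killing fields whose flows are the one-parameter subgroups $\Phi^A_t$. Everything else is linear algebra.
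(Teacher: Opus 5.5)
Your proof is correct and follows the paper's argument: $\rank R_\M(G,p)\le|E|$, and infinitesimal rigidity forces $\rank R_\M(G,p)=dn-t_p$ by rank--nullity, which is \eqref{eq:rank-criterion}. For the second claim the paper only asserts $t_p=k$ in the text preceding the corollary, and your argument supplies the missing justification: a discrete stabilizer forces each flow $\Phi^A_t$ fixing all $p_i$ to be trivial, so $\operatorname{ev}_p$ is injective.
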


\begin{proof}
The rank of an $|E|\times dn$ matrix is at most $|E|$.  Then, apply \eqref{eq:rank-criterion}.
\end{proof}

As the final part of this chapter, let us consider an example to illustrate how infinitesimal rigidity behaves on Riemannian manifolds. 

\begin{example}\label{ex:hyperbolictriang}
	Consider a 3-cycle graph in the upper half-plane model $\mathbb{H}\subset \mathbb{R}^2$ of the hyperbolic plane, so that $p_1$ is placed at $(0,1)$, $p_2$ is placed at $(1,1)$ and $p_3$ is placed at $(\frac{1}{2},\frac{1}{2})$. This triangle is depicted in \Cref{fig:triangandsaddle}(l.). Using the Riemannian metric tensor $ds^2=\frac{d{x_1}^2+d{x_2}^2}{{x_2}^2}$, $\mathbb{H}$ becomes a Riemannian manifold. 
	
	The geodesics are given by circular arcs and straight lines that are perpendicular to the line $\mathbb{R}\cdot (1,0)$ and the geodesic distance is characterized by the distance formula (cf.\ \cite[p.\ 81f.]{geometrie})
	\[\cosh d_\mathbb{H}(p,q)\,=\,1+\frac{\|p-q\|^2}{2\cdot p_2\cdot q_2}.\] 
	The hyperbolic triangle constructed above is therefore equilateral and has the edge length $\mathrm{arccosh}\left(\frac{3}{2}\right)$. Now, the group of isometries consists of horizontal translations, dilations and inversions. This group is isomorphic to $\mathrm{PGL}(2,\mathbb{R})$, which has dimension 3. Therefore, Corollary \ref{cor:maxwell} predicts that the necessary Maxwell count is exactly satisfied and the intrinsic bar-and-joint framework has the potential to be infinitesimally rigid.
	
	Using the tangential information (see Proposition \ref{prop:edge-row}) of the geodesic arcs that make up the edges of the intrinsic 3-cycle framework, i.e.\ segments of the three circles $\|p-(m_1,0)\|^2=\frac{5}{4}$ for $m_1\in \{-\frac{1}{2},\,\frac{1}{2},\,\frac{3}{2}\}$, we obtain the following matrix representation of the rigidity operator:
	\[\frac{\ell}{\sinh(\ell)}\cdot \begin{pmatrix}
		-1 & -\frac{1}{2} & 1 & -\frac{1}{2} & 0 & 0\\
		-1 & \frac{1}{2} & 0 & 0 & 1 & -2\\
		0 & 0 & 1 & \frac{1}{2} & -1 & -2
	\end{pmatrix}.\]
	for $\ell=\mathrm{acosh}(3/2)$. Since this matrix has rank 3, its kernel is 3-dimensional, which implies that all infinitesimal flexes corresponding to the 3-cycle are trivial. Therefore, the 3-cycle is infinitesimally rigid according to Definition \ref{def:inf_rigidity}.
\end{example}

\begin{figure}[h!]
	\centering
	\begin{minipage}[c]{0.275\linewidth}
		\includegraphics[width=\linewidth]{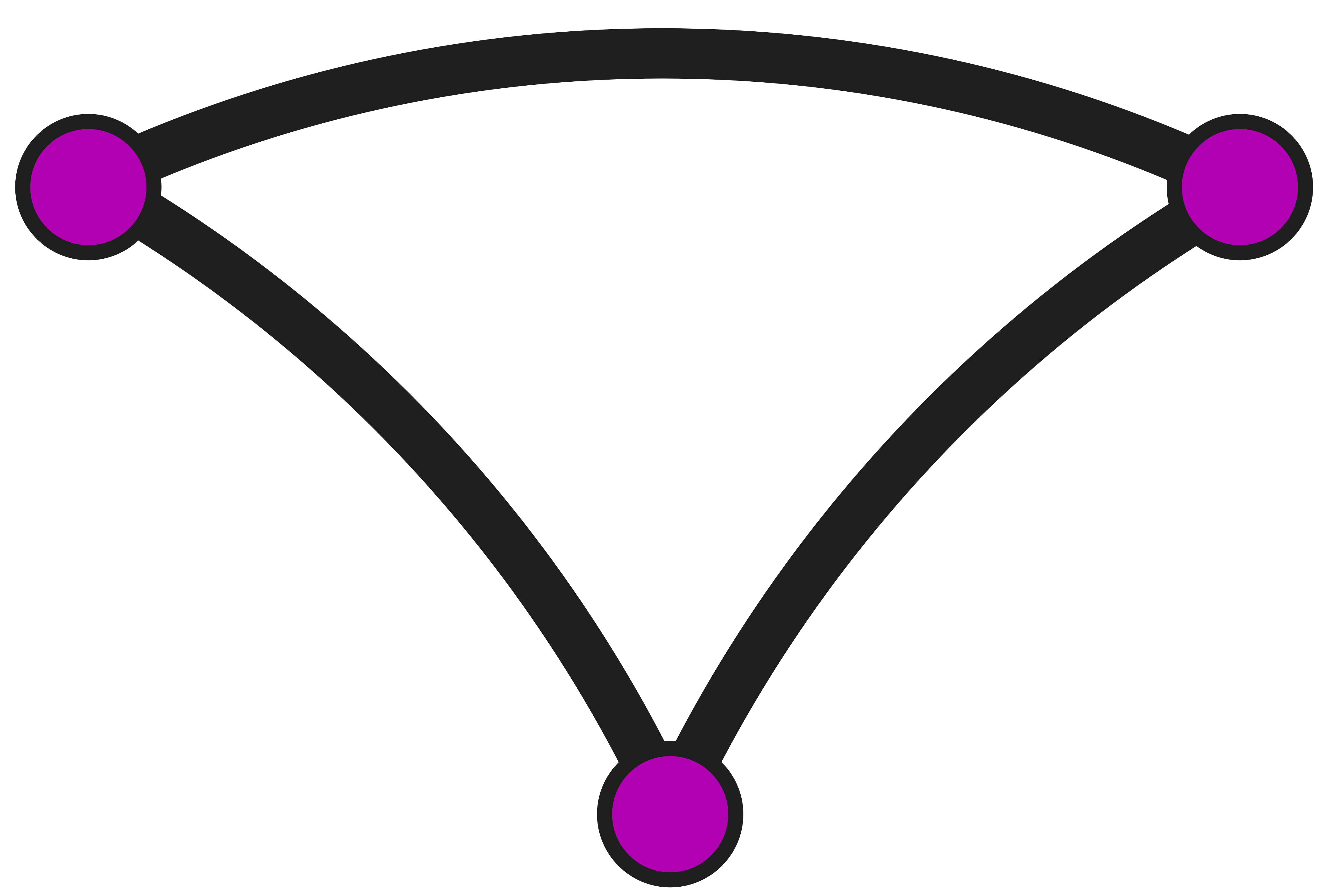}
	\end{minipage}\hspace*{26mm}
	\begin{minipage}[c]{0.375\linewidth}
		\includegraphics[width=\linewidth]{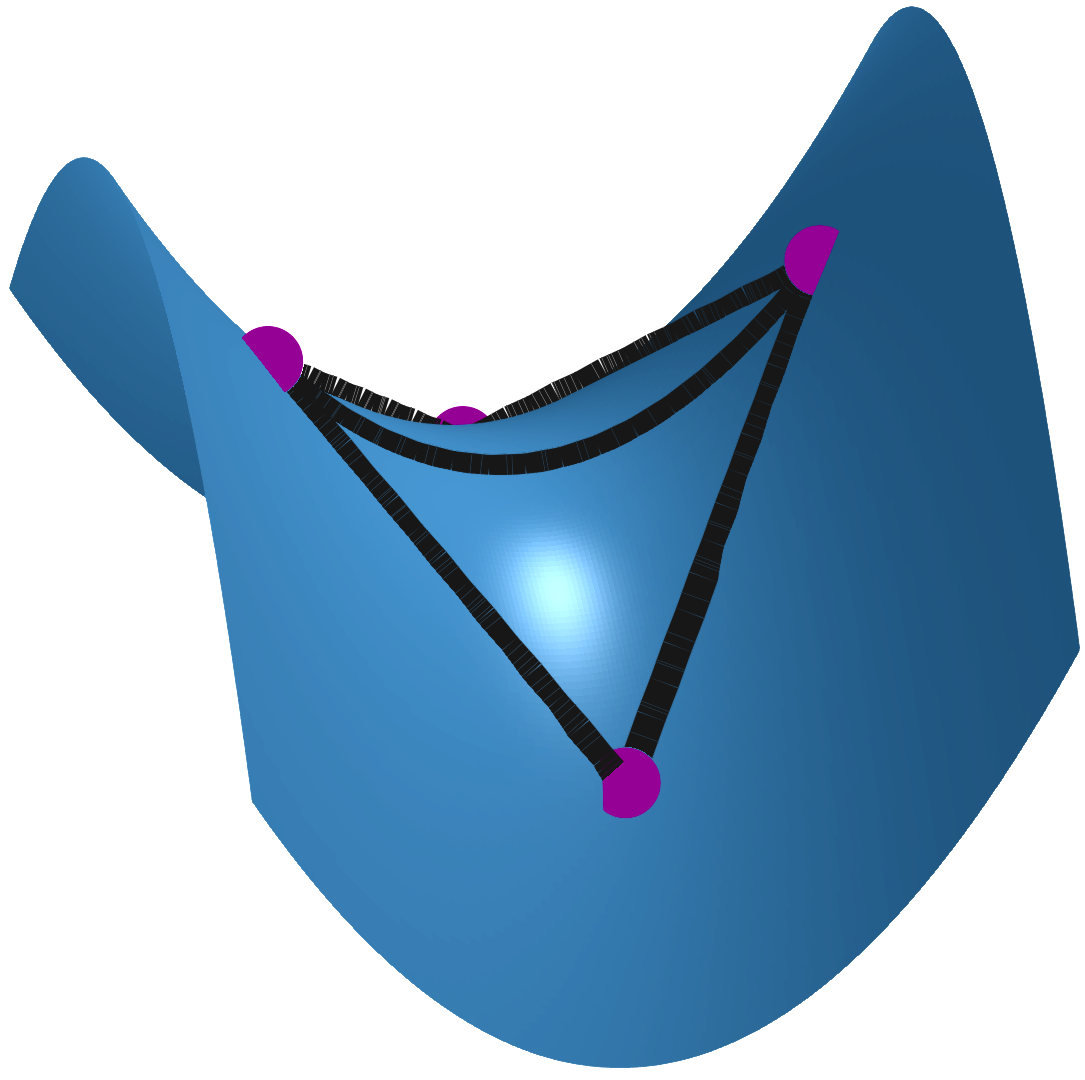}
	\end{minipage}
	\caption{\textbf{(l.)} An equilateral hyperbolic triangle in the upper half-plane model and \textbf{(r.)} a diamond graph on the saddle surface.}
	\label{fig:triangandsaddle}
\end{figure}

Next, we consider an example that is not infinitesimally rigid.

\begin{example}\label{ex:saddlediamond}
	Consider the saddle surface $\mathcal{S}$ cut out by the implicit equation $z=x^2-y^2$ together with the induced Euclidean metric. We take the diamond graph whose vertices are placed at $p_1=(1,0,1)$, $p_2=(0,1,-1)$, $p_3=(-1,0,1)$ and $p_4=(0,-1,-1)$. This intrinsic framework is depicted in Figure \ref{fig:triangandsaddle}(r.). Since $\mathcal{S}$ is a ruled surface, the geodesic arcs on the diamond graph's edges $(1,2)$, $(2,3)$, $(3,4)$ and $(1,4)$ are given by straight lines, while the geodesic arc on the edge $(1,3)$ is given by the intersection of $\mathcal{S}$ with a linear subspace. 
	
	The isometry group has dimension $0$; the surface only has discrete Euclidean symmetries such as reflections. However, the diamond graph only has 5 edges, which is smaller than $2\cdot 4=8$, implying that the necessary Maxwell condition from Corollary \ref{cor:maxwell} is not satisfied and the intrinsic framework is not infinitesimally rigid.
\end{example}

\section{Local rigidity and regular placements}\label{section:local_rigidity}

Recall that the identity component $\textsc{Isom}_{\mathrm{id}}$ is the connected component of $\textsc{Isom}(\M,g)$ that contains the identity $\mathrm{id}$. We define the orbit of the realization $p$ under the identity component as the orbit
\begin{eqnarray}\label{eqn:isometry-orbit}
 \cO_p=\Isom_{\mathrm{id}}(\M,g)\cdot p \subset \M^n.
\end{eqnarray}
We call $\mathcal{O}_p$ the \struc{trivial motions} of the framework. Since the group action preserves all pairwise Riemannian distances, it holds that
\[
 \cO_p\subseteq F_G^{-1}(F_G(p)).
\]

\begin{definition}[Local rigidity]\label{def:local_rigidity}
An admissible framework $(G,p)$ is \struc{(locally) rigid} if there exists a neighborhood $U$ of $p$ in $\M^n$ such that
\[
 F_G^{-1}(F_G(p))\cap U
 \subseteq \cO_p.
\]
It is called \struc{flexible} otherwise.
\end{definition}

This definition formally uses the ambient isometries as the trivial motions.  In a general metric-space formulation one may instead declare a motion trivial when it preserves all pairwise distances of the given finite configuration; these notions need not coincide on every Riemannian manifold.  See \cite{NixonPower2020} for the corresponding distinction in general metric contexts.

\begin{remark}\label{rem:local_isometry_group}
	 In Definition \ref{def:local_rigidity}, trivial motions are induced by the identity component of the global isometry group. This differs from local isometries, which are metric-preserving diffeomorphisms between open subsets of $\M$ and need not extend globally.
	
	For example, consider the interval $\M=(0,1)$ with the Euclidean metric. Translating all vertices of a framework by a sufficiently small amount preserves every intrinsic edge length. However, since the isometry group $\operatorname{Isom}_{\mathrm{id}}(M,g)$ is trivial, these motions are nontrivial according to Definition \ref{def:local_rigidity}. This phenomenon can occur even for complete graphs at regular placements.
	
	One could alternatively define trivial motions using local isometries \cite[p.\ 13]{Lee2018}. However, these generally form a pseudogroup rather than a global Lie-group action. Since we extensively use this property throughout this article, we therefore retain global isometries, whose proper action permits the application of the Slice Theorem in Sections \ref{section:local_rigidity} and \ref{section:second-order-theory}. Consequently, all rigidity notions in this article are to be understood modulo global ambient isometries of $\M$.
\end{remark}

Let us now consider an additional assumption on the placement $p$ that helps us to prove the coming results.

\begin{definition}[Regular placement]
An admissible placement $p$ is \struc{regular for $G$} if $\rank d(F_G)_q$ is constant for $q$ in some neighborhood of $p$.
\end{definition}

\begin{proposition}[Local equivalence]\label{prop:regular-equivalence}
Let $(G,p)$ be admissible and regular. Then $(G,p)$ is rigid if and only if it is infinitesimally rigid.
\end{proposition}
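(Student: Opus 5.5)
The plan is to follow the Asimow--Roth argument. The constant rank theorem makes the realization space a smooth submanifold near $p$, and the proper isometry action makes the orbit $\cO_p$ a closed embedded submanifold. Both rigidity notions then reduce to comparing these two submanifolds through their tangent spaces at $p$.

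\textbf{Step 1 (realization space).} Let $r=\rank d(F_G)_p$. By regularity the rank equals $r$ on a neighborhood of $p$ in the open set $\mathcal C_G$. The constant rank theorem then gives a connected open neighborhood $W$ of $p$ with the following properties:
\begin{itemize}
\item $F_G^{-1}(F_G(p))\cap W$ is a connected embedded submanifold $Z$ of dimension $dn-r$;
\item its tangent space is $T_pZ=\ker R_\M(G,p)$.
\end{itemize}
To get connectedness, take $W$ to be a cubical chart in constant-rank coordinates, so that $Z$ becomes a linear slice.

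\textbf{Step 2 (orbit).} The group $\Isom(\M,g)$ acts properly on $\M$ because $\M$ is connected, so its diagonal action on $\M^n$ is also proper. The closed subgroup $\Isom_{\mathrm{id}}(\M,g)$ acts properly as well. For a proper smooth action, every orbit is a closed embedded submanifold. Its tangent space at $p$ is the image of the infinitesimal action of the Lie algebra. Since $\Isom_{\mathrm{id}}$ and $\Isom$ share the Lie algebra $\mathfrak{isom}(\M,g)$, this image is exactly $\operatorname{im}(\operatorname{ev}_p)=\cK_p$. Hence $\dim\cO_p=t_p$ and $T_p\cO_p=\cK_p$. Recall also that $\cO_p\subseteq F_G^{-1}(F_G(p))$ and $\cK_p\subseteq\ker R_\M(G,p)$ by Proposition~\ref{prop:killing-kernel}. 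I expect this step to be the main obstacle, since it is where properness (and hence connectedness of $\M$) is really used. In particular, without properness the orbit could be an immersed but non-embedded submanifold, and Step 4 would fail.

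\textbf{Step 3 (infinitesimally rigid $\Rightarrow$ rigid).} If $\ker R_\M(G,p)=\cK_p$, then $\dim Z=t_p=\dim\cO_p$. After shrinking $W$ we may assume $W\subseteq\mathcal C_G$, so that $\cO_p\cap W\subseteq Z$. Then $\cO_p\cap W$ is an embedded submanifold of $Z$ of full dimension, hence open in $Z$ by invariance of domain. It is also closed in $Z$, because $\cO_p$ is closed in $\M^n$. It is nonempty since it contains $p$, and $Z$ is connected, so $\cO_p\cap W=Z$. Taking $U=W$ gives $F_G^{-1}(F_G(p))\cap U\subseteq\cO_p$, which is local rigidity.

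\textbf{Step 4 (rigid $\Rightarrow$ infinitesimally rigid).} Conversely, suppose $F_G^{-1}(F_G(p))\cap U\subseteq\cO_p$. Shrink $W\subseteq U$, so that $Z\subseteq\cO_p$. The inclusion $Z\hookrightarrow\M^n$ is smooth and lands in the embedded submanifold $\cO_p$, so it is smooth as a map into $\cO_p$. Therefore
\[
\ker R_\M(G,p)=T_pZ\subseteq T_p\cO_p=\cK_p.
\]
Combined with Proposition~\ref{prop:killing-kernel}, this gives equality, which is infinitesimal rigidity.
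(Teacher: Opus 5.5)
Your proof is correct and follows the paper's route. Both use the constant rank theorem to make $F_G^{-1}(F_G(p))$ a submanifold near $p$ with tangent space $\ker R_\M(G,p)$, treat $\cO_p$ as an embedded submanifold with $T_p\cO_p=\cK_p$, and compare tangent spaces in both directions. The only difference is in the forward direction: the paper applies the inverse function theorem to the inclusion $\cO_p\hookrightarrow\mathcal{L}$, while you use invariance of domain plus an open--closed argument on the connected $Z$; openness of $\cO_p\cap W$ in $Z$ near $p$ already yields rigidity, so closedness of the orbit and connectedness of $Z$ are not actually needed.
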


\begin{proof}
Set $\mathcal{L}:={F_G}^{-1}(F_G(p))$. The regularity of the realization $p$ implies that $\mathcal{L}$ is a smooth manifold near $p$ with tangent space $T_p\mathcal{L}=\ker R_\M(G,p)$ according to the Constant Rank Theorem. The trivial motions form a smooth immersed submanifold of this level set, with tangent space $T_p\cO_p=\cK_p$. In fact, $\mathcal{O}_p$ is an embedded submanifold of $\M^n$ and hence, $\mathcal{O}_p$ is locally embedded in $\mathcal{L}$.
Assume now that $(G,p)$ is infinitesimally rigid, implying that 
\[T_p\mathcal{O}_p=\mathcal{K}_p=\ker R_\M(G,p)=T_p\mathcal{L}.\] 
Since $\mathcal{O}_p\subseteq \mathcal{L}$, the inclusion $\iota:\mathcal{O}_p\hookrightarrow\mathcal{L}$ is smooth and the differential $d\iota_p:T_p\mathcal{O}_p\rightarrow T_p\mathcal{L}$ is an isomorphism. The Inverse Function Theorem therefore says that $\iota$ maps a neighborhood of $p$ in $\mathcal{O}_p$ diffeomorphically into an open neighborhood of $p$ in $\mathcal{L}$. Thus, after sufficiently shrinking the ambient neighborhood $U\subset \mathcal{L}$ it holds that $\mathcal{L}\cap U = \mathcal{O}_p\cap U$,
so $(G,p)$ is rigid.

Conversely, suppose that $(G,p)$ is rigid, so that \[{F_G}^{-1}(F_G(p))\cap U = \mathcal{O}_p\cap U\] for a sufficiently small open neighborhood $U$ of $p$. The manifolds $\mathcal{O}_p$ and $\mathcal{L}={F_G}^{-1}(F_G(p))$ therefore have the same tangent space at $p$ and hence
\[\ker R_\M (G,p)=T_p\mathcal{L}=T_p\mathcal{O}_p=\mathcal{K}_p.\]
Thus, $(G,p)$ is infinitesimally rigid.
\end{proof}

The regularity hypothesis is essential for the converse between finite and infinitesimal rigidity.  At singular placements a framework may be locally rigid while admitting nontrivial infinitesimal flexes, as in the classical Euclidean theory.

\begin{theorem}\label{thm:inf-rigid}
	The infinitesimal rigidity of a bar-and-joint framework $(G,p)$ in the intrinsic metric of a Riemannian manifold $(\mathcal{M},g)$ implies its rigidity.
	\begin{proof}
		Assume for the contraposition that $(G,p)$ is flexible. The identity component of the isometry group $\textsc{Isom}_{\mathrm{id}}(\M,g)$ acts properly and diagonally on $\M^n$, since $\M$ is connected by assumption. Therefore, by the Slice Theorem \cite[Prop.\ 2.3.1]{palaisslice} there exists a local slice $\mathcal{S}\subset\M^n$ through $p$ such that 
		\[T_p\M^n\,=\,T_p\mathcal{O}_p\oplus T_p\mathcal{S}\,=\,\mathcal{K}_p\oplus T_p\mathcal{S}.\]
		After shrinking $\mathcal{S}$ if necessary, every realization $q\in\mathcal{M}^n$ in a sufficiently small neighborhood $U\subset \mathcal{M}^n$ of $p$ can be written as $q=h\cdot s$ for some $h\in \textsc{Isom}_{\mathrm{id}}(\M,g)$ and $s\in \mathcal{S}$.

		Assume now that $(G,p)$ was infinitesimally rigid. In other words, $\ker R_\mathcal{M}(G,p)=\mathcal{K}_p$. Since $R_\M(G,p)=d(F_G)_p$, the restricted differential $d(F_G\big\lvert_\mathcal{S})$ satisfies
		\[\ker d(F_G\big\lvert_\mathcal{S})_p=\ker d(F_G)_p\cap T_p\mathcal{S}=\mathcal{K}_p\cap T_p\mathcal{S}=\{0\},\]
		which implies that $d(F_G\big\lvert_\mathcal{S}):T_p\mathcal{S}\rightarrow \mathbb{R}^E$ is injective. Thus, there exists a linear $\pi:\mathbb{R}^E\rightarrow \mathbb{R}^{\dim \mathcal{S}}$ such that 
		\[d(\pi\circ F_G\big\lvert_\mathcal{S})_p:T_p\mathcal{S}\rightarrow \mathbb{R}^{\dim \mathcal{S}}\]
		is an isomorphism. 
		Hence, by the inverse function theorem $\pi\circ F_G\big\lvert_S$ is injective after shrinking $\mathcal{S}$ around $p$. In particular, the equation $F_G(s)=F_G(p)$ implies $s=p$ for any $s\in \mathcal{S}$. 
		
		Now, let $q\in U$ satisfy $F_G(q)=F_G(p)$. Since $F_G$ is invariant under isometries, we obtain
		\[F_G(s)=F_G(h\cdot s)=F_G(q)=F_G(p)\]
		for some $s\in \mathcal{S}$. By the local injectivity established above, it follows that $s=p$. Therefore, $q=h\cdot s\in \mathcal{O}_p$ for all $q\in U$ and consequently, ${F_G}^{-1}(F_G(p))\cap U \subseteq \mathcal{O}_p$, so $(G,p)$ is locally rigid. This contradicts the assumption that the framework is flexible. Therefore, $\ker R_\mathcal{M}(G,p)\neq \mathcal{K}_p$ and the framework is infinitesimally flexible.
	\end{proof}
\end{theorem}

The converse of this implication is generally not true. For that, let us consider the following example.

\begin{example}\label{ex:hyperboloid}
	Consider a 3-cycle graph realized in a special position on the one-sheeted hyperboloid $\mathcal{H}$ in $\mathbb{R}^3$ cut out by the equation $x^2+y^2-z^2=1$ and equipped with the induced Riemannian metric $g$ from the ambient Euclidean space. The vertices of the corresponding framework are placed in the points $p_1=(1,0,0)$, $p_2=(\cos(\frac{2\pi}{3}), \,\sin(\frac{2\pi}{3}),\,0)$ and $p_3=(\cos(\frac{4\pi}{3}), \,\sin(\frac{4\pi}{3}),\,0)$ so that all vertices are adjacent. This framework is depicted in Figure \ref{fig:hyperboloid_square}
	
	\begin{figure}[h!]
		\centering
		\includegraphics[width=0.35\linewidth]{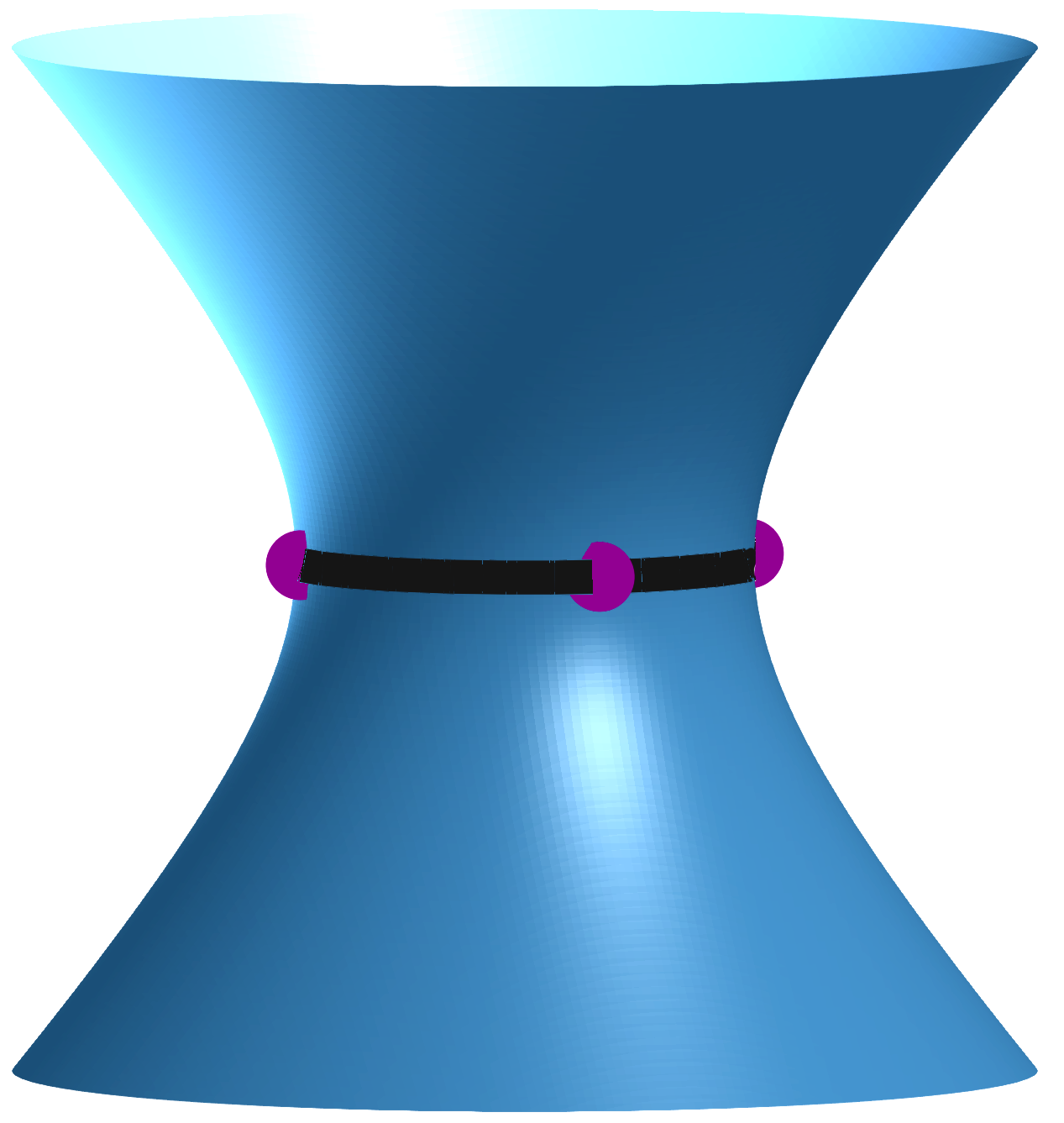}
		\caption{A 3-cycle graph on the one-sheeted hyperboloid. }
		\label{fig:hyperboloid_square}
	\end{figure}
	
	As a surface of revolution, the dimension of the hyperboloid's isometry group is 1 \cite{NixonOwenPower2012}. Therefore, the Maxwell count (cf.\ Corollary \ref{cor:maxwell}) is deficient and the framework is infinitesimally flexible. In fact, the kernel of the rigidity operator is generated by 4 vectors, leaving 3 nontrivial infinitesimal flexes. For instance, one such flex is given by applying the infinitesimal translation vector $(0,0,1)$ to each vertex. 
	
	Still, the framework is rigid. To demonstrate the claim, let us choose angular coordinates: $r=\sqrt{1+z^2}$ and $\theta\in [0,2\pi)$ so that the one-sheeted hyperboloid is given by 
	\[\mathcal{H}=\left\{\left(\sqrt{1+z^2}\cdot \cos(\theta),\, \sqrt{1+z^2}\cdot \sin(\theta),\,z\right)~ \bigg\lvert ~z\in\mathbb{R}\text{ and } \theta\in [0,2\pi)\right\}.\]
	Let us now consider any continuously differentiable curve \[\gamma:[0,1]\rightarrow \mathcal{H},\quad\gamma(t)=\left(\sqrt{1+z(t)^2}\cdot\cos(\theta(t)),\, \sqrt{1+z(t)^2}\cdot\sin(\theta(t)),\,z(t)\right)\] 
	Its Riemannian curve length satisfies
	\begin{eqnarray*}
	L_g(\gamma)&=&\int_{0}^{1} \sqrt{\frac{(z(t)\cdot z'(t))^2}{1+z(t)^2}+(1+z(t)^2)\cdot \theta'(t)^2+z'(t)^2}~dt \\
	&\geq & \int_{0}^{1} \sqrt{1+z(t)^2}\cdot \lvert \theta'(t)\lvert \,dt~\geq ~\int_{0}^{1} \lvert \theta'(t)\lvert \,dt
	\end{eqnarray*}
	In other words, the Riemannian distance between two points $p,q$ is bounded below by the absolute angular difference between $p$ and $q$. Returning to this chain of inequalities, we realize that for the length of geodesic arcs, equality to the absolute angular difference can only occur when $z'(t)=z(t)=0$. This is only the case when the curve lies on the hyperboloid's equator. 
	
	Now, the three intrinsic edges of the 3-cycle each have arc length $\frac{2\pi}{3}$. Suppose now that $q_1,q_2,q_3$ is a sufficiently small perturbation that preserves the edge lengths, i.e.\ $\ell_g(q_1,q_2)=\ell_g(q_2,q_3)=\ell_g(q_1,q_3)=\frac{2\pi}{3}$, and let 
	\[\theta_{i,j}=\arccos\left(\frac{\big\langle (q_{i,1},q_{i,2}),~(q_{j,1},q_{j,2})\big\rangle}{\sqrt{q_{i,1}^2+q_{i,2}^2}\cdot \sqrt{q_{j,1}^2+q_{j,2}^2}}\right)\] 
	be the angular separation between the vertices $q_i$ and $q_j$. By the admissibility of the realization $p$, it satisfies $\theta_{i,j}<\pi$ for all sufficiently close realizations $q$. By a previous argument, it holds that $\theta_{i,j}\leq \ell_g(q_i,q_j)$ for each $1\leq i < j \leq 3$. Since the cyclic order of the 3-cycle is unchanged by sufficiently small perturbations, it further holds that $\theta_{1,2}+\theta_{2,3}+\theta_{1,3}=2\pi$. Together, these observations imply that $\theta_{1,2}=\theta_{2,3}=\theta_{1,3}=\frac{2\pi}{3}$.
	
	However, this forces $q_1$, $q_2$ and $q_3$ to lie on the equator. Since the edge lengths are preserved by this perturbation, this implies that the deformed framework can only be obtained by a rotation of the original framework, which is an isometry of $\mathcal{H}$. Therefore, the 3-cycle is rigid, but not infinitesimally rigid, on $\mathcal{H}$.
\end{example}

\subsection{Flexibility and continuous motions} 
Dual to a frameworks rigidity is its flexibility: determining either of these concepts automatically gives a negative answer to the other. In this article, a framework's flexibility is defined in the usual sense by using accumulation points. With the Curve Selection Lemma (cf.\ \cite{curveselection}), the classical literature is able to infer the existence of continuous curves that do not come from isometry orbits (e.g.\ \cite{AsimowRoth1978}). However, this correspondence is generally not true in our setting, which we outline in this section.

First, we define the concept of a continuous motion, which are exactly continuous curves in the framework's realization space.

\begin{definition}
	A \struc{continuous/smooth/analytic motion} of $(G,p)$ is a continuous/smooth/analytic curve $\gamma:[0,1]\rightarrow {F_G}^{-1}(F_G(p))$ so that $\gamma(0)=p$. It is called \struc{trivial} if $\gamma(t)\in \mathcal{O}_p$ for all $t\in[0,1]$ and \struc{nontrivial} otherwise. 
\end{definition}

\begin{theorem}[Existence of motions in regular points]
	\label{thm:existenceofcontmotion}
	Let $(G,p)$ be admissible and regular. If $(G,p)$ is flexible, then there exists a smooth nontrivial motion in ${F_G}^{-1}(F_G(p))$ through $p$.
\end{theorem}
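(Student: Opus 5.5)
Since $p$ is regular, the constant rank theorem applies to $F_G$ near $p$, as in the proof of Proposition \ref{prop:regular-equivalence}. We fix a neighborhood $W$ of $p$ in $\mathcal C_G$ on which $\rank d(F_G)_q=r$ is constant. Then $\mathcal L\cap W$, with $\mathcal L={F_G}^{-1}(F_G(p))$, is an embedded submanifold of dimension $m=dn-r$, after shrinking $W$. Its tangent space is $T_q\mathcal L=\ker R_\M(G,q)$. We may also take $\mathcal L\cap W$ connected, by choosing $W$ to be the image of a coordinate ball in a constant-rank chart. The orbit $\cO_p$ is an embedded submanifold contained in $\mathcal L$, because the action of $\Isom_{\mathrm{id}}(\M,g)$ on $\M^n$ is proper. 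Its dimension is $t_p=\dim\cK_p$.

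Next, flexibility gives infinitesimal flexibility. By Theorem \ref{thm:inf-rigid}, or by Proposition \ref{prop:regular-equivalence}, flexibility of $(G,p)$ forces $\ker R_\M(G,p)\supsetneq\cK_p$. Hence $m>t_p$. So near $p$ the orbit $\cO_p\cap W$ is an embedded submanifold of $\mathcal L\cap W$ of strictly smaller dimension.

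To construct the motion, we choose $u\in\ker R_\M(G,p)\setminus\cK_p=T_p\mathcal L\setminus T_p\cO_p$. In a constant-rank chart $\phi$ around $p$, the set $\mathcal L\cap W$ corresponds to a linear slice $\R^m\times\{0\}$. We take the straight line $\gamma(t)=\phi^{-1}(\phi(p)+t\,\tilde u)$, where $\tilde u=d\phi_p(u)$, and rescale time so that $t\in[0,1]$ stays inside the chart. This is a smooth curve in ${F_G}^{-1}(F_G(p))$ with $\gamma(0)=p$ and $\gamma'(0)=u$. It is nontrivial: if $\gamma(t)\in\cO_p$ for all $t$, then $\gamma$ would be a smooth curve in the embedded submanifold $\cO_p$. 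Its velocity $\gamma'(0)$ would then lie in $T_p\cO_p=\cK_p$, which contradicts the choice of $u$.

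The main obstacle is the claim that $\cO_p$ is an embedded submanifold with $T_p\cO_p=\cK_p$. This is needed to conclude from $\gamma\subset\cO_p$ that $\gamma'(0)\in\cK_p$. For an immersed but non-embedded orbit, a smooth curve in $\M^n$ lying in $\cO_p$ might not be smooth into the orbit. I would handle this with properness of the action: orbits of proper actions are closed embedded submanifolds diffeomorphic to $\Isom_{\mathrm{id}}/\mathrm{Stab}_p$, with tangent space the image of the infinitesimal action, which is exactly $\operatorname{ev}_p(\mathfrak{isom})=\cK_p$. Alternatively, the Slice Theorem as used in Theorem \ref{thm:inf-rigid} gives a local product structure $\cO_p\times\mathcal S$ near $p$. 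Projecting $\gamma$ onto the slice then shows that triviality forces the slice component to be constant, so $\gamma'(0)\in\cK_p$.
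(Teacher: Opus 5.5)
Your proof is correct and follows essentially the same route as the paper. The Constant Rank Theorem makes $\mathcal L={F_G}^{-1}(F_G(p))$ a submanifold near $p$ with $T_p\mathcal L=\ker R_\M(G,p)$; flexibility yields some $u\in\ker R_\M(G,p)\setminus\cK_p$; and a curve in $\mathcal L$ with velocity $u$ cannot stay in $\cO_p$, since $u\notin T_p\cO_p=\cK_p$. Your explicit constant-rank-chart curve and your appeal to properness make rigorous the nontriviality step that the paper states in one line: orbits of a proper action are embedded, so a curve lying in $\cO_p$ has velocity in $\cK_p$.
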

\begin{proof}
	By the Constant Rank Theorem, $\mathcal{L}:={F_G}^{-1}(F_G(p))$ is a smooth submanifold near $p$ and $T_p\mathcal{L}=\ker R_\M(G,p)$. According to Proposition \ref{prop:regular-equivalence}, the flexibility of $(G,p)$ implies that the framework is infinitesimally flexible. Therefore, we can choose $u\in \ker R_\M (G,p)\setminus \mathcal{K}_p = T_p\mathcal{L}\setminus \mathcal{K}_p$. By the definition of the tangent space, there exists an $\varepsilon>0$ and a smooth curve $\gamma:(-\varepsilon, \varepsilon)\rightarrow \mathcal{L}$ so that $\gamma(0)=p$ and $\gamma'(0)=u$. Since $\gamma(t)\in \mathcal{L}$, it further holds that $F_G(\gamma(t))=F_G(p)$ for all $t\in (-\varepsilon, \varepsilon)$, so that $\gamma$, after reparametrization, is a smooth motion of $(G,p)$. It is nontrivial, because $u\notin \mathcal{K}_p=T_p\mathcal{O}_p$, so $\gamma((-\varepsilon,\varepsilon))\not\subset \mathcal{O}_p$.
\end{proof}

Unlike in the classical algebraic setting, we cannot simply use the Curve Selection Lemma \cite{curveselection} to show that any flexible framework admits an analytic motion. The property that a flexible framework $(G,p)$ has accumulation points arbitrarily close to $p$ does not necessarily imply that we can find a curve through these accumulation points.

Consider for example the smooth curve
\[f:\mathbb{R}\rightarrow \mathbb{R},~~~~\begin{cases}
	e^{-\frac{1}{x^2}}\cdot \sin(\pi/x)& x\neq 0\\
	0& x=0
\end{cases}.\]
It holds that $f^{-1}(f(0))=f^{-1}(0)=\{0\}\cup\{\frac{1}{n}\,\lvert\, n\in \mathbb{Z}\setminus\{0\}\}$. Therefore, $0$ is not an isolated point of $f^{-1}(0)$. Nevertheless, there exists no nonconstant continuous curve $\gamma:[0,1]\rightarrow f^{-1}(0)$ with $\gamma(0)=0$, since the image of a connected set under a continuous map must stay connected, whereas every connected subset of $f^{-1}(0)$ consists of a single point. 

This example illustrates, why the assumption that $(G,p)$ is regular is essential in Theorem \ref{thm:existenceofcontmotion}. Nevertheless, by assuming that the Riemannian manifold $(\M,g)$ is analytic, we can use a more general version of the Curve Selection Lemma to obtain a similar result.

\begin{theorem}[Existence of motions on analytic manifolds]\label{thm:existencecontinuousmotionsrealanalytic}
	Assume that $(\M,g)$ is a real-analytic Riemannian manifold.
		If $(G,p)$ is flexible, then there exists a
	 real-analytic nontrivial motion through $p$.
	\end{theorem}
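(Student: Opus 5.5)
The plan is to reduce the statement to the Curve Selection Lemma for real-analytic (or subanalytic) sets. The trivial motions are removed by working on a slice, exactly as in the proof of Theorem \ref{thm:inf-rigid}.

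\textbf{Step 1: analyticity of the constraint map.} Since $(\M,g)$ is real-analytic, the geodesic equation has real-analytic coefficients. Hence $\Exp$ is real-analytic, and by the analytic inverse function theorem so is $\Log$ on $\mathcal{U}_{\mathrm{Log}}$. Consequently $F_G(q)_{ij}=\frac12\|\Log_{q_i}(q_j)\|^2_{q_i}$ is real-analytic on the open set $\mathcal C_G$, and $\mathcal{L}=F_G^{-1}(F_G(p))$ is a real-analytic subset of $\mathcal C_G$.

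\textbf{Step 2: analyticity of the isometry group and choice of slice.} By Myers--Steenrod, $\Isom(\M,g)$ is a Lie group. For an analytic metric its action on $\M$ is real-analytic: isometries are determined by a point and a frame, and they are obtained from $\Exp$. The action is proper, so by the Slice Theorem there is a slice $\mathcal S$ through $p$ with $T_p\M^n=\cK_p\oplus T_p\mathcal S$. Every $q$ near $p$ can be written as $q=h\cdot s$. I would take $\mathcal S$ to be a real-analytic submanifold, for instance $\mathcal S=\{\Exp_p(v): v\in W\}$ for a small ball in a complement $W$ of $\cK_p$, using the product exponential on $\M^n$. Transversality to the orbit gives the local decomposition; properness and smallness then rule out spurious returns of the orbit.

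\textbf{Step 3: flexibility transfers to the slice.} I claim that $p$ is not an isolated point of $\mathcal L\cap\mathcal S$. Flexibility gives $q_k\to p$ with $q_k\in\mathcal L\setminus\cO_p$. Write $q_k=h_k\cdot s_k$ with $s_k\in\mathcal S$, and choose $h_k$ near the identity; this is possible because the decomposition map $\Isom_{\mathrm{id}}\times\mathcal S\to\M^n$ is a local submersion near $(\mathrm{id},p)$. Then $s_k\to p$ and $F_G(s_k)=F_G(q_k)=F_G(p)$ by invariance. Moreover $s_k\neq p$, since otherwise $q_k\in\cO_p$. Thus $p$ lies in the closure of $Z:=(\mathcal L\cap\mathcal S)\setminus\{p\}$. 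The set $Z$ is real-analytic in the analytic manifold $\mathcal S$, minus a point, hence semianalytic in an analytic chart.

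\textbf{Step 4: apply curve selection and project to the original framework.} The Curve Selection Lemma for semianalytic sets gives a real-analytic $\sigma:[0,1)\to\mathcal S$ with $\sigma(0)=p$ and $\sigma(t)\in Z$ for $t>0$. After reparametrization this is an analytic curve in $\mathcal L$, so it is a motion. It remains to see that it is nontrivial. If $\sigma(t)\in\cO_p\cap\mathcal S$ for small $t$, this would contradict the slice property: near $p$, the set $\cO_p\cap\mathcal S$ reduces to the stabilizer-orbit of $p$, which is $\{p\}$ because $\mathcal S$ is transverse and $\cO_p$ is embedded. Since $\sigma(t)\neq p$, the curve leaves $\cO_p$.

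\textbf{Main obstacle.} I expect the hardest step to be Step 2: ensuring that the slice and the local structure of $\cO_p\cap\mathcal S$ are compatible with the analytic category. The key fact to establish is that $\cO_p\cap\mathcal S=\{p\}$ locally. I would obtain this from the embeddedness of the orbit (proper action) together with the transversality $\cK_p\cap T_p\mathcal S=\{0\}$, using an injectivity argument analogous to the one in Theorem \ref{thm:inf-rigid}.
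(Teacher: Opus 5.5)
Your argument is correct, but it removes the trivial motions at a different stage than the paper does.

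\textbf{The paper's route.} The paper stays in a neighborhood $U$ of $p$ in $\M^n$. It uses real-analyticity of the isometry action to conclude that $\cO_p$ is locally an embedded real-analytic submanifold, so that $\mathcal{A}=\mathcal C\setminus\cO_p$ is semianalytic. Flexibility is then literally the condition $p\in\overline{\mathcal A}$. Curve selection applied to $\mathcal A$ yields an analytic curve that avoids $\cO_p$ by construction, and no slice is needed.

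\textbf{Your route.} You restrict to the analytic slice $\mathcal S=\Exp_p(W)$ and transfer the flexing sequence into $\mathcal S$ through the local submersion $(h,s)\mapsto h\cdot s$ at $(\mathrm{id},p)$. You then delete only the point $p$, so semianalyticity is immediate. Nontriviality is recovered afterwards: the embedded orbit and $\mathcal S$ meet transversally in complementary dimensions, so $p$ is isolated in $\cO_p\cap\mathcal S$.

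\textbf{Trade-offs.} Your route never actually uses analyticity of the $\Isom_{\mathrm{id}}(\M,g)$-action. The claim in your Step 2 is superfluous: only $\Exp$ needs to be analytic, and every statement involving the group (the decomposition $q_k=h_k\cdot s_k$ with $h_k\to\mathrm{id}$, and the isolation of $p$ in $\cO_p\cap\mathcal S$) is purely smooth. Your proof also mirrors the slice reductions in the proofs of Theorems~\ref{thm:inf-rigid} and~\ref{thm:so-local}.

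The paper's proof is shorter, but it relies on the cited analyticity of the orbit. Yours costs the extra decomposition and transversality arguments, and in return yields a motion lying in a slice transverse to $\cO_p$.
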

	
	\begin{proof}
		Since $p$ is admissible, every pair of vertices incident to an edge lies in $\mathcal{U}_\mathrm{Log}$.
		For a real-analytic Riemannian metric, the exponential map is real
		analytic, and therefore the logarithm map is locally real analytic
		wherever it is defined. While the Riemannian distance $\ell_g$ generally fails to be differentiable at $x=y$, the squared Riemannian distance ${\ell_g}^2$, and thus $F_G$, is real analytic
		in a neighborhood $U$ of $p$.
		Set
		\[
		\mathcal C=F_G^{-1}(F_G(p))\cap U.
		\]
		Then $\mathcal C$ is a real-analytic set. The isometry group acts diagonally and properly, as well as real-analytically on $\M^n$ (cf.\ \cite{baouendietal}), so the orbit $\mathcal O_p$ is
		locally an embedded real-analytic submanifold. In particular, $\mathcal{O}_p$ is locally defined by analytic coordinate equations. Its complement
		\[
		\mathcal{A}:=\mathcal C\setminus\mathcal O_p
		\]
		is therefore semi-analytic (cf.\ \cite{bierstone-millman}).
		
		Flexibility is characterized by the property that every neighborhood of $p$ contains a realization
		with the same edge lengths as $p$ which is not contained in $\mathcal O_p$ or, equivalently,
		$p\in\overline{\mathcal{A}}$, where $\overline{\mathcal{A}}$ denotes the closure of $\mathcal{A}$. We are now going to use the following technical lemma:
		\begin{lemma}[Real-analytic curve selection Lemma ({cf.\ \cite[Lem.\ 6.6]{realanalyticcurveselection}})]\label{lem:curve-selection-lemma}
			Let $\M$ be a real-analytic manifold and $X\subset\M$ a semi-analytic subset. If $p\in \overline{X\setminus\{p\}}$, then there exists $\varepsilon>0$ and a real-analytic curve $\gamma:[0,\varepsilon)\rightarrow \M$ such that $\gamma(0)=p$ and $\gamma((0,\varepsilon))\subset X\setminus\{p\}$. 
		\end{lemma}
		
		By Lemma \ref{lem:curve-selection-lemma}, there exist
		$\varepsilon>0$ and a real-analytic curve
		$\gamma:[0,\varepsilon)\to \M^n$
		such that
		\[
		\gamma(0)=p,
		\qquad
		\gamma(t)\in \mathcal{A}
		\quad\text{for }0<t<\varepsilon.
		\]
		Therefore,
		$
		F_G(\gamma(t))=F_G(p)
		$
		for all sufficiently small $t$ by construction, while
		$
		\gamma(t)\notin\mathcal O_p
		$ for $t>0$.
		In other words, $\gamma$ is a nontrivial real-analytic motion.
	\end{proof}

In fact, we can show that the assumption that the manifold is real-analytic is necessary for the existence of nontrivial motions.

\begin{proposition}
	There exist a connected, smooth Riemannian manifold $(\M,g)$ and an admissible
	intrinsic framework $(G,p)$ such that $(G,p)$ is locally flexible modulo the
	identity component of $\Isom(\M,g)$, but every continuous motion of $(G,p)$ is
	trivial.
\end{proposition}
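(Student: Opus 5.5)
The idea is to realize, inside an honest intrinsic framework, the one-variable phenomenon $f^{-1}(0)=\{0\}\cup\{1/n\}$ from the discussion after Theorem \ref{thm:existenceofcontmotion}. I would work on a surface $\M\cong\R^2$ whose metric has trivial identity component $\Isom_{\mathrm{id}}(\M,g)=\{\mathrm{id}\}$. Then $\cO_p=\{p\}$, so a continuous motion is trivial exactly when it is constant. Likewise, local flexibility exactly means that $p$ is a non-isolated point of $F_G^{-1}(F_G(p))$. It therefore suffices to build $(G,p)$ such that, near $p$, the realization space is homeomorphic to $\{0\}\cup\{1/n\mid n\in\mathbb{Z}\setminus\{0\}\}$. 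Every connected subset of that set is a point, so every continuous motion is then constant, as in the earlier example.

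\textbf{Pinning the vertices.} Start from the flat metric on a large region. Take a rigid ``anchor'' subframework on vertices $p_2,\dots,p_k$, for instance a triangulated patch, together with a generic placement. By Theorem \ref{thm:inf-rigid} and the count in Corollary \ref{cor:maxwell}, I would arrange that this subframework is infinitesimally rigid. I would then give each anchor vertex enough extra edges, for example to points near boundary-fixed features, so that, once the metric is perturbed to have trivial isometry group, the anchor placement is isolated in its own realization space.

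Add one more vertex $p_1$ joined only to $p_2$ and $p_3$, with lengths $a$ and $b$. Then the realizations near $p$ are, up to the isolated anchor, exactly the points $p_1'$ near $p_1$ lying in
\[
C_a\cap S_b,\qquad C_a=\{x:\ell_g(x,p_2)=a\},\quad S_b=\{x:\ell_g(x,p_3)=b\}.
\]

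\textbf{Engineering the distance sphere.} Keep the metric flat near $p_2$ and along the minimizing segments from $p_2$, so that $C_a$ is a Euclidean circle arc near $p_1$. Near $p_3$ and in the sector containing $p_1$, but away from the $p_2$–geodesics, I would modify the metric in geodesic polar form $g=dr^2+G(r,\theta)^2d\theta^2$, pushed forward by a diffeomorphism $\Psi$ of the plane. Distance spheres about $p_3$ are then $\Psi(\{r=b\})$. Choosing $\Psi$ to be a compactly supported perturbation of the identity, the curve $S_b$ becomes, in arclength coordinate $s$ along $C_a$ at $p_1$, the graph of the normal offset
\[
e^{-1/s^2}\sin(\pi/s).
\]
This offset is flat at $s=0$, so $\Psi$ can be taken $C^\infty$, and then
\[
C_a\cap S_b=\{p_1\}\cup\{\text{points at }s=1/n\}
\]
locally. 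Finally, I would add a small generic bump away from all vertices and edges to kill any remaining continuous isometries. A metric with such a bump has $\Isom_{\mathrm{id}}$ trivial, since Killing fields are determined by their $1$-jet at a point and generic curvature admits none.

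\textbf{Main obstacle.} The hard step is making the modification near $p_3$ genuinely compatible with everything else. Three things must hold:
\begin{enumerate}[label=(\roman*)]
\item $r$ must remain the true distance from $p_3$ on the relevant region, which requires no conjugate or cut points there. I would ensure this by keeping $\Psi$ $C^2$-close to the identity, so that injectivity radius bounds persist.
\item The perturbation must not alter $\ell_g(\cdot,p_2)$ near $p_1$, nor the anchor edge lengths. This requires the support of $\Psi$ to avoid the $p_2$–geodesics and the anchor.
\item The placement must stay admissible in the sense of Definition \ref{def:admissibility}.
\end{enumerate}
If localization against the $p_2$–geodesics proves awkward, the fallback is to deform both distance functions simultaneously and to prescribe only the relative offset between $C_a$ and $S_b$.
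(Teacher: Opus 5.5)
Your reduction is sound: with $\Isom_{\mathrm{id}}(\M,g)$ trivial you have $\cO_p=\{p\}$, so it suffices to make the realization space near $p$ homeomorphic to $\{0\}\cup\{1/n\}$. But neither step that is supposed to produce this is established, and as described both fail.

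\textbf{The anchor.} You place the anchor in the flat region and put the isometry-killing bump away from all vertices and bars. Then every anchor bar stays a straight segment in flat space. Its rigidity operator only sees these segments, so $\ker R_\M$ of the anchor contains the three-dimensional space of Euclidean infinitesimal motions, whereas in your final metric $\cK_p=\{0\}$. The anchor is therefore infinitesimally flexible, and indeed flexible under small Euclidean motions. Killing global isometries shrinks the trivial motions without shrinking the realization space, which is exactly the phenomenon of Remark~\ref{rem:local_isometry_group}. There are no fixed points to attach extra bars to, since every vertex is free. The count in Corollary~\ref{cor:maxwell} is only necessary. Once the anchor can move, the reduction to $C_a\cap S_b$ is lost. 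Generically these motions change the relative offset of $C_a$ and $S_b$ at $p_1$ to first order, and the implicit function theorem in the anchor-motion parameters then gives a nontrivial smooth motion of the whole framework. Pinning the anchor would need curvature along its bars and an actual injectivity proof for its rigidity operator. You supply neither.

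\textbf{The distance spheres.} This step, which you flag as the main obstacle, is the entire construction, and it is not carried out. The freedom you actually invoke, the choice of $\Psi$, cannot by itself do anything. If $g=\Psi_*g_0$ with $\Psi$ fixing $p_2,p_3$, then $\ell_g(\cdot,p_k)=\ell_{g_0}(\Psi^{-1}(\cdot),p_k)$ for both $k$. So $C_a$ and $S_b$ are both replaced by their $\Psi$-images and still meet like two Euclidean circles. Your support condition in (ii) also contradicts the need for $\Psi\neq\mathrm{id}$ near $p_1$, which is exactly where the $p_2$-geodesics end. A genuine relative offset needs three things, none of which you provide:
\begin{enumerate}[label=(\alph*)]
\item curvature placed on the $p_3$-geodesics but off the $p_2$-geodesics, with the metric still flat near $p_1$;
\item a design of this ``lens'' whose emerging wavefront equals the prescribed curve exactly to infinite order;
\item a global argument that no competing path becomes shorter.
\end{enumerate}
Your fallback merely restates the goal.

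The missing idea, which the paper uses, is to replace pinning by a global length inequality. On $\R\times S^1$ take $g_\varepsilon=\dd r^2+(1+\varepsilon h(r))^2\dd\theta^2$, where $h\ge0$ vanishes exactly on $Z=\{0\}\cup\{1/n\}$. Then $L_{g_\varepsilon}\ge\int|\dot\theta|$. Combined with winding number $1$ of the closed curve formed by the three bars of total length $2\pi$, this forces every continuously reachable realization onto a single circle $r\in Z$. Total disconnectedness of $Z$ then forces $r=0$. Meanwhile the circles $r=1/n$ carry the accumulating realizations $q^{(n)}$. No anchor or wavefront design is needed, and the trivial motions are just rotations.
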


\begin{proof}
	Let $\M=\mathbb R\times S^1$ be a cylinder with coordinates $(r,\theta)$ and angle $\theta\in[0,2\pi)$. Define the smooth function
	\begin{equation*}\label{eq:h}
		h(r)=\begin{cases}
			e^{-1/r^2}\sin(\pi/r)^2,&r>0,\\
			0,&r=0,\\
			e^{-1/r^2},&r<0.
		\end{cases}
	\end{equation*}
	Its zero set is the totally disconnected set $
	Z=h^{-1}(0)=\{0\}\cup\{1/n:n\in\mathbb N\}.$
	For $\varepsilon>0$, we define the Riemannian metric
	\begin{equation*}\label{eq:metric}
		g_\varepsilon=d r^2+(1+\varepsilon h(r))^2\,d\theta^2
	\end{equation*}
	on $\M$. Let $G$ be the 3-cycle with vertices $1,2,3$, and the realizations
	\begin{equation*}
	p_i=\left(0,~2\pi\cdot\frac{i-1}{3}\right)\qquad \text{and}\qquad
	q_i^{(n)}=\left(\frac{1}{n},~2\pi\cdot\frac{i-1}{3}\right).
	\end{equation*}
	For a piecewise $C^1$ curve $\gamma=(r,\theta)$, using any lift of its
	angular coordinate to $\mathbb R$, it holds that
	\begin{equation*}\label{eq:lengthbound}
		L_{g_\varepsilon}(\gamma)
		=\int\sqrt{\dot r^{\,2}+(1+\varepsilon h(r))^2\dot\theta^{\,2}}\,\dd t
		\geq\int|\dot\theta|\,\dd t.
	\end{equation*}
	If two points have the same height $r\in Z$ and angular separation less
	than $\pi$, the shorter horizontal circular arc attains this lower bound.
	It is the unique minimizing geodesic: equality forces $\dot r=0$ along
	any minimizing curve, and the shorter angular arc is unique.
	Consequently, all three edges of $p$ and of every $q^{(n)}$ have length
	$2\pi/3$, so
	\begin{equation}\label{eq:sequence}
		F_{G}(q^{(n)})=F_{G}(p),\qquad q^{(n)}\longrightarrow p.
	\end{equation}
	
	We first prove that there is no nontrivial continuous motion through $p$.
	Let \mbox{$q:[0,1]\to F_{G}^{-1}(F_{G}(p))$} be such a motion, where the
	realization space is restricted to admissible placements. Join the three
	successive vertices of $q(t)$ by their minimizing geodesic arcs, obtaining
	a closed curve $\Gamma_t$. Admissibility makes these arcs continuous in
	their endpoints. The winding number of $\Gamma_t$ around the cylinder is
	therefore constant and equals $1$, which is exactly its value at $t=0$.
	
	Each edge has length $2\pi/3$. A lift of $\Gamma_t$ to the universal cover
	thus satisfies
	\begin{equation}\label{eq:equality}
		2\pi=L_{g_\varepsilon}(\Gamma_t)
		\geq \int_{\Gamma_t}|d\theta|
		\geq \left|\int_{\Gamma_t}d\theta\right|=2\pi.
	\end{equation}
	This implies that $r$ is constant along every edge,
	hence all three vertices have one common height $r(t)$. It also implies
	$h(r(t))=0$, since the winding number of $\Gamma_t$ is exactly 1, so $r(t)\in Z$. Since $r(t)$ is continuous, $r(0)=0$, and since $Z$ is
	totally disconnected, it necessarily holds that $r(t)=0$ for all $t$.
	Moreover, equality in \eqref{eq:equality} forces all angular increments
	to have the same sign; as each edge has length $2\pi/3$ and the total
	increment is $2\pi$, each increment equals $2\pi/3$. Hence $q(t)$ differs
	from $p$ only by a common angular rotation. Such rotations are global
	isometries in the metric $g_\varepsilon$. Every continuous motion from $p$ is thus
	trivial.
	
	It follows that none of the distinct realizations $q^{(n)}$ belong to the
	$\Isom_{\mathrm{id}}(\M,g_\varepsilon)$-orbit of $p$:
	otherwise a one-parameter family of isometries $\Phi_t$ would yield a continuous
	motion from $p$ to $q^{(n)}$. Together with the convergence property from \eqref{eq:sequence}, this proves the
	local flexibility of $(G,p)$ in the sense of Definition \ref{def:local_rigidity}
	
	Finally, choose $\varepsilon>0$ sufficiently small to ensure admissibility
	in the sense of Definition \ref{def:admissibility}. Indeed, $h$ and its first two derivatives
	are bounded, so $g_\varepsilon$ converges uniformly in $C^2$ to the flat
	cylinder metric as $\varepsilon\to0$. On the universal cover, the
	exponential maps on tangent balls of any fixed radius $R<\pi$ then converge
	uniformly in $C^1$ to the induced Euclidean exponential maps. For sufficiently
	small $\varepsilon$ they remain injective on these balls, and distinct
	lifts cannot differ by a deck translation of angular size $2\pi$.
	Taking $2\pi/3<R$ gives an injectivity radius greater than $2\pi/3$
	at all the displayed vertices. The framework and nearby realizations
	are therefore admissible.
\end{proof}

\begin{remark}
	The metric $g_\varepsilon$ is smooth but not real analytic. Hence the
	example does not contradict the real-analytic curve-selection result from \Cref{thm:existencecontinuousmotionsrealanalytic}. It also shows that the regularity assumption in the
	smooth motion-existence theorem cannot simply be omitted: if $p$ were a
	regular placement, that theorem would produce a nontrivial smooth motion.
\end{remark}

\section{Intrinsic second-order rigidity theory}\label{section:second-order-theory}

Even in the Euclidean case, the infinitesimal theory does not completely characterize the rigidity of bar-and-joint frameworks.  In Example \ref{ex:hyperboloid}, we show that this assertions remains true in the Riemannian setting: a curve may preserve all edge lengths up to first order, but may fail to do so in higher orders.  This is the geometric mechanism behind a framework that is locally rigid, but infinitesimally flexible. Instead, second-order criteria (cf.\ \cite{connelly1996}) ask a broader question:

\begin{quote}
	Can an infinitesimal flex $u\in\ker R_\M(G,p)$ be consistently extended to a smooth curve, along which the Riemannian edge length is preserved up to second order?
\end{quote}

On a manifold, the answer has to be formulated covariantly. The second derivative of a curve in $\M^n$ is not an ordinary coordinate acceleration; it is its covariant acceleration with respect to the product Levi-Civita connection (see Definition \ref{def:geodesics}).

\subsection{The covariant second derivative of the constraint map}

For each edge $e\in E$, we denote the corresponding component of the constraint map by $f_e(p):=(F_G(p))_{e}$. We consider the Riemannian Hessian $\Hess f_e(p)$ of this scalar map on $\mathcal{M}^n$ and collect these edge Hessians into the symmetric bilinear map
\[
\mathcal H_p:T_p\mathcal{M}^n\times T_p\mathcal{M}^n\longrightarrow\mathbb R^E,
\qquad
\bigl(\mathcal H_p(u,v)\bigr)_e=\Hess f_e(p)[u,v].
\]
Using this notation, we can make the objects from the question above more precise:

\begin{proposition}[Second derivative along a curve]\label{prop:curve-second-derivative}
	Let $q(t)$ be a $C^2$ curve in $\mathcal{M}^n$ with
	\[
	q(0)=p,
	\qquad
	u:= q'(0),
	\qquad
	a:=\nabla^{\mathcal{M}^n}_{q'}q'\big|_{t=0}.
	\]
	Then,
	\begin{equation}\label{eq:second-derivative-F}
		\frac{\partial^2}{\partial t^2}F_G(q(t))\Big|_{t=0}
		=R_\M(G,p)a+\mathcal H_p(u,u).
	\end{equation}
\end{proposition}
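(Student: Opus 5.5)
This is an edgewise identity, so the plan is to fix an edge $e\in E$ and prove the scalar version
\[
\frac{\dd^2}{\dd t^2} f_e(q(t))\Big|_{t=0}=\dd(f_e)_p[a]+\Hess f_e(p)[u,u].
\]
Collecting the components over all edges then gives \eqref{eq:second-derivative-F}, because $\dd(f_e)_p$ is exactly the $e$-th row of $R_\M(G,p)=d(F_G)_p$. For small $t$ the curve $q(t)$ stays in the open set $\mathcal C_G$ of admissible placements, so $f_e\circ q$ is $C^2$ there.

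The key tool is the standard characterization of the Riemannian Hessian on the product manifold $\mathcal M^n$. This manifold carries the product metric, and its Levi-Civita connection $\nabla^{\mathcal M^n}$ acts componentwise. The Hessian (Definition \ref{def:hessian}) is $\Hess f[u,v]=\langle \nabla_u \mathrm{grad} f, v\rangle$, or equivalently $\Hess f = \nabla \dd f$.

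First I would write $h(t):=f_e(q(t))$ and compute
\[
h'(t)=\dd(f_e)_{q(t)}[q'(t)]=\langle \mathrm{grad} f_e(q(t)),q'(t)\rangle_{q(t)}.
\]
Next I would differentiate once more, using metric compatibility of $\nabla^{\mathcal M^n}$ along the curve:
\[
h''(t)=\Bigl\langle \tfrac{D}{\dd t}\,\mathrm{grad} f_e(q(t)),\,q'(t)\Bigr\rangle+\Bigl\langle \mathrm{grad} f_e(q(t)),\,\tfrac{D}{\dd t}q'(t)\Bigr\rangle .
\]
The covariant derivative along the curve of the restricted vector field $\mathrm{grad} f_e$ equals $\nabla_{q'(t)}\mathrm{grad} f_e$, since the Levi-Civita connection is tensorial in the direction argument. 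At $t=0$ the first term is therefore $\Hess f_e(p)[u,u]$. The second term is $\dd(f_e)_p[a]$, because $a=\frac{D}{\dd t}q'\big|_{t=0}=\nabla_{q'}q'\big|_{t=0}$.

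The step that needs care is regularity. Both $f_e$ and $\mathrm{grad} f_e$ must be smooth near $p$, and this holds because admissibility makes $\frac12\ell_g^2$ smooth on $\mathcal U_{\mathrm{Log}}$, as noted before Definition \ref{def:admissibility}. I also need $q\in C^2$ so that $q'$ is $C^1$ and $\frac{D}{\dd t}q'$ exists. With these in place, the derivative of $\mathrm{grad} f_e$ along the curve is legitimate.

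I would add a remark that this expression does not depend on extending $u$ to a vector field, since it only uses $\mathrm{grad} f_e$ evaluated along $q$. I would also note that the edge-row formula from Proposition \ref{prop:edge-row} gives the explicit form $\mathrm{grad} f_e=(-\Log_{p_i}(p_j),-\Log_{p_j}(p_i))$ in the $i$ and $j$ slots and zero elsewhere. That form is not needed for this proof, though.
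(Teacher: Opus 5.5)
Your proposal is correct and follows the same route as the paper. The paper quotes the identity $\frac{d^2}{dt^2}f(q(t))\big|_{t=0}=df_p(\nabla_{q'}q')+\Hess f(p)[q',q']$ and applies it to each edge component $f_e$, whereas you derive that identity by differentiating $\langle \mathrm{grad} f_e(q(t)),q'(t)\rangle$ using metric compatibility (the step $D_t(\mathrm{grad} f_e\circ q)=\nabla_{q'}\mathrm{grad} f_e$ is really the extendibility property of the covariant derivative along curves, not tensoriality alone, but it is standard).
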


\begin{proof}
	For a scalar function $f$ on a Riemannian manifold $\M$,
	\[
	\frac{\partial^2}{\partial t^2}f(q(t))\big\lvert_{t=0}
	=df_p\bigl(\nabla_{\dot q}\dot q\bigr)+\Hess f(p)[ q',q'].
	\]
	Then, we apply this identity componentwise to $F_G$ to obtain Equation \eqref{eq:second-derivative-F}.
\end{proof}

Formula \eqref{eq:second-derivative-F} is the intrinsic analogue of the familiar Euclidean expression
\[
F''(p)[u,u]+F'(p)a.
\]
The use of covariant acceleration makes the formula coordinate-independent.

Using Jacobi fields, we can find a particularly nice expression for the edge Hessian.

\begin{proposition}[Explicit description of the Riemannian Hessian]\label{prop:riemannian-edge-hessian}
	Given an intrinsic bar-and-joint framework $(G,p)$ on a Riemannian manifold $(\M,g)$ that has an admissible placement $p$, let $\gamma(t)=\Exp_{p_i}(t\cdot v_{ij})$ be the unique minimizing geodesic between $p_{i}$ and $p_j$ for an edge $ij\in E$ with $v_{ij}=\Log_{p_i}(p_j)$ so that $\gamma(0)=p_i$ and $\gamma(1)=p_j$. For $u,v\in T_p\M^n$, let $J_u$ be the Jacobi field along $\gamma$ satisfying $J_u(0)=u_i$ and $J_u(1)=u_j$. Then, for any edge $e\in E(G)$ the edge Hessian has the form
	\[(\mathcal{H}_p(u,v))_e=-\big\langle D_tJ_u(0),\,v_i\big\rangle_{p_i}+\big\langle D_tJ_u(1),\,v_j\big\rangle_{p_j}\]
	\begin{proof}
		First of all, the variation of the Riemannian logarithms is given by
		\[\nabla_u \Log_{p_i}(p_j)=D_tJ_u(0)\quad\text{ and }\quad \nabla_u \Log_{p_j}(p_i)=-D_tJ_u(1),\]
		where $\nabla_u \Log_{p_i}(p_j)$ means varying both points $p_i$ and $p_j$ according to $u_i$ and $u_j$ (cf.\ \cite{pennec}).
		Since the Riemannian gradient of $f_e$ satisfies $\mathrm{grad}\,f_e(p)=(0 \dots 0 ~ -\Log_{p_i}(p_j) ~ 0 \dots 0  ~ -\Log_{p_j}(p_i) ~ 0 \dots 0)$ (see Proposition \ref{prop:edge-row}), we can compute
		\begin{eqnarray*}
			(\mathcal{H}_p(u,v))_e &=& \mathrm{Hess}\, f_e(p)[u,v] \\
			&\overset{\text{Def. }\ref{def:hessian}}{=}& \big\langle \nabla_u\mathrm{grad}\,f_e(p),\,v\rangle_p\\
			&\overset{\text{Prop. }\ref{prop:edge-row}}{=}& -\big\langle \nabla_u \Log_{p_i}(p_j),\,v_i\big\rangle_{p_i} - \big\langle \nabla_u\Log_{p_j}(p_i),\,v_j\big\rangle_{p_j}\\
			&=& -\big\langle D_tJ_u(0),\,v_i\big\rangle_{p_i}+\big\langle D_tJ_u(1),\,v_j\big\rangle_{p_j}
		\end{eqnarray*}
	because the Levi-Civita connection is compatible with the Riemannian metric $g$, which concludes the proof.
	\end{proof}
\end{proposition}

\subsection{Second-order flexes and second-order rigidity}

\begin{definition}[Second-order flex]\label{def:second-order-flex}
	A pair $(u,a)\in T_p\mathcal{M}^n\times T_p\mathcal{M}^n$ is a \struc{second-order flex} of $(G,p)$ if
	\begin{align}
		R_\M(G,p)u&=0,\nonumber\\
		R_\M(G,p)a+\mathcal H_p(u,u)&=0.\label{eq:so2}
	\end{align}
	The vector $u$ is the first-order velocity and $a$ is a compatible covariant acceleration.
\end{definition}

Every genuine $C^2$ motion $q(t)$ with $F_G(q(t))=F_G(p)$ yields a second-order flex by taking $u=q'(0)$ and $a=\nabla_{q'}q'|_{t=0}$.

\begin{definition}[Second-order rigidity]\label{def:second-order-rigid}
	The framework $(G,p)$ is \struc{second-order rigid} if every second-order flex $(u,a)$ has $
	u\in \cK_p$.
	Equivalently, no nontrivial infinitesimal flex admits a compatible second-order acceleration.
\end{definition}

This definition sits strictly between first-order information and local rigidity: a vector may satisfy the linearized edge equations but fail the quadratic compatibility equations.

\subsection{Self-stresses and the exact second-order stress test}

Equip $\mathbb R^E$ with its standard inner product.  The adjoint of the rigidity operator is
\[
R_\M(G,p)^*:\mathbb R^E\longrightarrow T_p\mathcal{M}^n.
\]

\begin{definition}[Self-stress]\label{def:self-stress}
	A vector $\omega=(\omega_e)_{e\in E}\in\mathbb R^E$
	is a \struc{self-stress} at $p$ if $R_\M(G,p)^*\omega=0$
	or equivalently,
	\[
	\omega\in\ker R_\M(G,p)^*=(\im R_\M(G,p))^\perp.
	\]
	To a self-stress $\omega$, we associate the \struc{stress potential} $\Psi_\omega: \mathcal{C}_G\rightarrow \mathbb{R}$
	\[
	\Psi_\omega(q):=\sum_{e\in E}\omega_e f_e(q)
	\]
	which comes with the \struc{stress Hessian} $Q_\omega:T_p\M^n\times T_p\M^n\rightarrow \mathbb{R}$ given by
	\begin{equation*}
		Q_\omega(u,v)
		:=\ip{\omega}{\mathcal H_p(u,v)}_{\mathbb R^E}
		=\sum_{e\in E}\omega_e\,\Hess f_e(p)[u,v].
	\end{equation*}
\end{definition}

Because $\omega$ is a stress, it holds that
\begin{equation}\label{eq:stresspotential}
d(\Psi_\omega)_p=\omega^\top R_\M(G,p)=0,\quad \text{ so }\quad \Hess \Psi_\omega(p)=Q_\omega.
\end{equation}

\begin{lemma}[Trivial motions lie in the radical]\label{lem:radical}
	For every self-stress $\omega$,
	\[
	Q_\omega(k,v)=0
	\]
	for all $k\in \mathcal{K}_p$ and all $v\in T_p\mathcal{M}^n$.
	Consequently $Q_\omega$ descends to a well-defined quadratic form on the quotient $\ker R_\M(G,p)/\mathcal{K}_p.$
\end{lemma}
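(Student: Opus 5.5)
Write $k=\mathrm{ev}_p(A)=(A(p_1),\dots,A(p_n))$ for some $A\in\mathfrak{isom}(\M,g)$, and let $\Phi_t:=\Phi^A_t$ be the associated one-parameter group of isometries acting diagonally on $\M^n$. The plan is to differentiate the invariance of the stress potential $\Psi_\omega$ along this orbit and to use that $p$ is a critical point of $\Psi_\omega$ by \eqref{eq:stresspotential}.

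First, every $f_e$ is invariant under isometries, so $\Psi_\omega\circ\Phi_t=\Psi_\omega$ on a neighborhood of $p$ in $\mathcal C_G$ for small $|t|$. Admissibility is an open condition and is preserved by isometries, so this holds on an open set. Differentiating at $t=0$ gives $d(\Psi_\omega)_q(\hat A(q))=0$ for all $q$ near $p$, where $\hat A(q)=(A(q_1),\dots,A(q_n))$ is the diagonal Killing field on $\M^n$. Thus the smooth function $q\mapsto \langle \mathrm{grad}\,\Psi_\omega(q),\hat A(q)\rangle_q$ vanishes identically near $p$.

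Next, differentiate this identity covariantly in a direction $v\in T_p\M^n$. Metric compatibility of the product Levi-Civita connection gives
\[
0=\big\langle \nabla_v\mathrm{grad}\,\Psi_\omega,\hat A(p)\big\rangle_p+\big\langle \mathrm{grad}\,\Psi_\omega(p),\nabla_v\hat A\big\rangle_p .
\]
The second term vanishes because $\mathrm{grad}\,\Psi_\omega(p)=0$, which holds since $\omega$ is a self-stress (equation \eqref{eq:stresspotential}). The first term is $\Hess\Psi_\omega(p)[v,k]=Q_\omega(v,k)$ by Definition \ref{def:hessian} together with \eqref{eq:stresspotential}. Symmetry of the Hessian then yields $Q_\omega(k,v)=0$.

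For the descent statement, $\mathcal K_p\subseteq\ker R_\M(G,p)$ by Proposition \ref{prop:killing-kernel}, so the quotient makes sense. Since $\mathcal K_p$ lies in the radical of the symmetric form $Q_\omega$ restricted to $\ker R_\M(G,p)$, the value $Q_\omega(u+k,u'+k')$ equals $Q_\omega(u,u')$ by bilinearity. Hence $Q_\omega$ is well defined on the quotient.

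The only delicate point is the first step: one must justify differentiating $\Psi_\omega$ along the orbit. This requires that the invariance holds on a genuine neighborhood of $p$ (so that $\Psi_\omega$ is smooth and invariant there) rather than only at the point $p$. Openness of $\mathcal C_G$ and smoothness of the flow $\Phi_t$ handle this. An alternative route is to differentiate $\Psi_\omega(\Phi_t(\gamma(s)))$ with $\gamma$ a curve in direction $v$, but the gradient identity above avoids computing second derivatives of the flow.
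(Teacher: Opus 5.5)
Your proof is correct and is essentially the paper's argument: invariance of $\Psi_\omega$ under the isometry flow gives $\widetilde X\Psi_\omega\equiv 0$ near $p$, and differentiating this in direction $v$ at the critical point $p$ leaves only $\Hess\Psi_\omega(p)[v,k]=0$. Your metric-compatibility expansion of $v\langle \mathrm{grad}\,\Psi_\omega,\hat A\rangle$ is the paper's identity $\Hess_p\Psi_\omega(v,\widetilde X_p)=v(\widetilde X\Psi_\omega)-d(\Psi_\omega)_p(\nabla_v\widetilde X)$ written out, and you also make explicit the symmetry and bilinearity steps that the paper leaves implicit.
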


\begin{proof}
	The constraint map $F_G$ is invariant under the group action of $\Isom_\mathrm{id}(M,g)$, hence so is $\Psi_\omega$.  Let $\widetilde X$ be the vector field on $\mathcal{M}^n$ induced by an element of $\mathfrak{isom}(\M,g)$ (see \Cref{section:inf_flexes}).  Then $\widetilde X(\Psi_\omega)=0$ identically.  At the critical point $p$ of $\Psi_\omega$,
	\[
	\Hess_p\Psi_\omega(v,\widetilde X_p)
	=v\bigl(\widetilde X\Psi_\omega\bigr)
	-d(\Psi_\omega)_p(\nabla_v\widetilde X)
	=0.
	\]
	Since $\widetilde X_p\in \cK_p$, the claim follows.
\end{proof}

Since $\mathcal{K}_p$ and $\ker R_\M(G,p)$ are linear spaces with $\mathcal{K}_p\subset\ker R_\M(G,p)$ (see Proposition \ref{prop:killing-kernel}), Lemma \ref{lem:radical} guarantees that it suffices to consider any space of infinitesimal flexes $\mathcal{I}_p\subset \ker R_\M(G,p)$ which is complementary to the space of trivial infinitesimal flexes $\mathcal{K}_p$. In other words, it suffices to consider any subspace $\mathcal{I}_p\subset \ker R_\M(G,p)$ with the property that $\mathcal{I}_p\oplus \mathcal{K}_p=\ker R_\M(G,p)$ instead of having to deal with the nonlinear object $\ker R_\M(G,p)\,\setminus\,\mathcal{K}_p$.

\begin{proposition}\label{prop:stress-obstruction}
	If $(u,a)$ is a second-order flex, then $Q_\omega(u,u)=0$
	for every self-stress $\omega$.
\end{proposition}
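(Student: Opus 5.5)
The plan is to pair the second-order flex equation \eqref{eq:so2} with the self-stress $\omega$, using the standard inner product on $\R^E$. Since the conclusion is linear in $\omega$ and involves only the quadratic part of \eqref{eq:so2}, the whole argument should reduce to one application of adjointness.

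First, I take the inner product of \eqref{eq:so2} with $\omega$:
\[
0=\ip{\omega}{R_\M(G,p)a}_{\R^E}+\ip{\omega}{\mathcal H_p(u,u)}_{\R^E}.
\]
Second, I rewrite the first term via the adjoint as $\ip{R_\M(G,p)^*\omega}{a}_p$, where $\ip{\cdot}{\cdot}_p$ denotes the product metric on $T_p\M^n=\bigoplus_i T_{p_i}\M$. This term vanishes because $R_\M(G,p)^*\omega=0$ by Definition \ref{def:self-stress}; equivalently, $\omega\perp\im R_\M(G,p)$. Third, the remaining term is exactly $Q_\omega(u,u)$, again by Definition \ref{def:self-stress}. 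Hence $Q_\omega(u,u)=0$.

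I do not expect a genuine obstacle here. The only point needing care is that the adjoint is taken with respect to the product Riemannian metric on $T_p\M^n$ and the standard inner product on $\R^E$, so that $\omega\in\ker R_\M(G,p)^*$ really is equivalent to $\omega\perp\im R_\M(G,p)$. This is finite-dimensional linear algebra. Alternatively, one can avoid adjoints altogether by using \eqref{eq:stresspotential}: $\omega^\top R_\M(G,p)=d(\Psi_\omega)_p=0$, so $\omega^\top R_\M(G,p)a=0$ directly.

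The first-order condition $R_\M(G,p)u=0$ is not used in this argument. It becomes relevant only when one combines this statement with Lemma \ref{lem:radical}, which lets $Q_\omega(u,u)$ be read on the quotient $\ker R_\M(G,p)/\cK_p$.
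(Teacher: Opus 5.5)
Your proof is correct and matches the paper's argument: pair \eqref{eq:so2} with $\omega$, move $R_\M(G,p)$ to its adjoint so that the acceleration term vanishes, and identify the remaining term as $Q_\omega(u,u)$. Your side remarks are also accurate: the first-order condition is not needed, and $d(\Psi_\omega)_p=0$ gives an alternative route that avoids adjoints.
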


\begin{proof}
	We pair Equation \eqref{eq:so2} with $\omega$ to obtain
	\[
	0
	=\big\langle\omega, \,R_\M(G,p)a+\mathcal H_p(u,u)\big\rangle
	=\big\langle\underbrace{R_\M(G,p)^*\omega}_{=0},\,a\big\rangle+Q_\omega(u,u)
	=Q_\omega(u,u).\qedhere
	\]
\end{proof}

The converse is just as important and is a consequence of the Fredholm Alternatives Theorem.

\begin{theorem}[Exact stress test]\label{thm:stress-test}
	Let $u\in\ker R_\M(G,p)$.  The following are equivalent:
	\begin{enumerate}[label=(\roman*)]
		\item there exists $a\in T_p\mathcal{M}^n$ such that $(u,a)$ is a second-order flex;
		\item $\mathcal H_p(u,u)\in\im R_\M(G,p)$;
		\item $Q_\omega(u,u)=0$ for every self-stress $\omega\in\ker R_\M(G,p)^*$.
	\end{enumerate}
\end{theorem}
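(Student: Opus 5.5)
The plan is to prove the cycle (i)$\Rightarrow$(ii)$\Rightarrow$(iii)$\Rightarrow$(i), or more economically (i)$\Leftrightarrow$(ii) and (ii)$\Leftrightarrow$(iii). Everything happens at the fixed point $p$, so the problem reduces to finite-dimensional linear algebra for the single linear map $R:=R_\M(G,p):T_p\M^n\to\R^E$. Here $T_p\M^n$ carries the product inner product $\bigoplus_i g_{p_i}$, and $\R^E$ carries its standard inner product. The nonlinearity in $u$ plays no role: for fixed $u\in\ker R$ the vector $b:=\mathcal H_p(u,u)\in\R^E$ is just a fixed right-hand side.

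For (i)$\Leftrightarrow$(ii), I will unwind Definition \ref{def:second-order-flex}. Since $u\in\ker R$ is assumed, the first condition holds automatically. A second-order flex $(u,a)$ therefore exists if and only if the equation $Ra=-b$ is solvable in $a$. This happens if and only if $-b\in\im R$, which, because $\im R$ is a linear subspace, is equivalent to $b\in\im R$. This gives (ii).

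For (ii)$\Leftrightarrow$(iii), I will invoke the finite-dimensional Fredholm alternative in the form $\im R=\bigl(\ker R^*\bigr)^{\perp}$ in $\R^E$. This holds because $(\im R)^\perp=\ker R^*$ (indeed $\langle Ra,\omega\rangle=\langle a,R^*\omega\rangle$ for all $a$), and in finite dimensions $(W^\perp)^\perp=W$. Consequently, $b\in\im R$ if and only if $\langle\omega,b\rangle_{\R^E}=0$ for every $\omega\in\ker R^*$, that is, for every self-stress in the sense of Definition \ref{def:self-stress}. By that definition, $\langle\omega,\mathcal H_p(u,u)\rangle=Q_\omega(u,u)$, which yields (iii). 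The direction (i)$\Rightarrow$(iii) is also Proposition \ref{prop:stress-obstruction}, so this step only adds the converse.

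I do not expect a genuine obstacle here. The one point requiring care is that the adjoint $R^*$, and hence the notion of self-stress, depends on the chosen inner products, so I should make explicit that $\im R=(\ker R^*)^\perp$ is taken with respect to the standard inner product on $\R^E$. The metric on $T_p\M^n$ only enters through the definition of $R^*$. Any other choice of inner product on the domain leaves $\ker R^*$ unchanged, since $(\im R)^\perp$ is computed in $\R^E$ alone. Finite-dimensionality, with $\dim T_p\M^n=dn<\infty$ and $|E|<\infty$, ensures $\im R$ is closed, so no analytic subtleties arise.
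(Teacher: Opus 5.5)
Your proposal is correct and follows essentially the same route as the paper: (i)$\Leftrightarrow$(ii) by unwinding Equation \eqref{eq:so2} with $u\in\ker R_\M(G,p)$, and (ii)$\Leftrightarrow$(iii) via the finite-dimensional identity $\im R_\M(G,p)=\bigl(\ker R_\M(G,p)^*\bigr)^\perp$ together with $\langle\omega,\mathcal H_p(u,u)\rangle=Q_\omega(u,u)$. Your additional remark that $\ker R_\M(G,p)^*=(\im R_\M(G,p))^\perp$ depends only on the inner product on $\R^E$, not on the metric on $T_p\M^n$, is a correct clarification that the paper leaves implicit.
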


\begin{proof}
	The equivalence of (i) and (ii) is given by Equation \eqref{eq:so2}.  In finite dimensions,
	\[
	\im R_\M(G,p)=\bigl(\ker R_\M(G,p)^*\bigr)^\perp.
	\]
	Therefore (ii) holds exactly when $\mathcal H_p(u,u)$ is orthogonal to every self-stress, which is (iii).
\end{proof}

\begin{corollary}[Stress characterization of second-order rigidity]\label{cor:stress-characterization}
	The framework is second-order rigid if and only if for every nonzero $
	u\in\ker R_\M(G,p)/ \cK_p$
	there exists a self-stress $\omega$ for which
	\[
	Q_\omega(u,u)\neq0.
	\]
\end{corollary}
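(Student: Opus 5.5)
The corollary follows by combining Definition \ref{def:second-order-rigid} with Theorem \ref{thm:stress-test}, using Lemma \ref{lem:radical} to make the quotient statement meaningful.

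First I make the statement well posed. By Lemma \ref{lem:radical}, for every self-stress $\omega$ the form $Q_\omega$ vanishes whenever one argument lies in $\cK_p$. Hence for $u\in\ker R_\M(G,p)$ and $k\in\cK_p$,
\[
Q_\omega(u+k,u+k)=Q_\omega(u,u).
\]
So the condition ``$Q_\omega(u,u)\neq 0$'' depends only on the class $[u]\in\ker R_\M(G,p)/\cK_p$. A class is nonzero exactly when its representatives satisfy $u\notin\cK_p$.

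For the forward direction, assume $(G,p)$ is second-order rigid and let $u\in\ker R_\M(G,p)\setminus\cK_p$. Suppose, for contradiction, that $Q_\omega(u,u)=0$ for every self-stress $\omega$. The implication (iii)$\Rightarrow$(i) of Theorem \ref{thm:stress-test} then gives some $a\in T_p\M^n$ such that $(u,a)$ is a second-order flex. Since $u\notin\cK_p$, this contradicts Definition \ref{def:second-order-rigid}. Therefore some self-stress $\omega$ satisfies $Q_\omega(u,u)\neq0$.

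For the converse, assume the stress condition holds and let $(u,a)$ be any second-order flex. Then $u\in\ker R_\M(G,p)$, and Proposition \ref{prop:stress-obstruction} gives $Q_\omega(u,u)=0$ for every self-stress $\omega$. If $u$ were not in $\cK_p$, its class would be nonzero, and the hypothesis would supply an $\omega$ with $Q_\omega(u,u)\neq 0$, a contradiction. Hence $u\in\cK_p$, which is exactly second-order rigidity.

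There is no real analytic obstacle; everything is already contained in the exact stress test. The only step needing care is the well-definedness on the quotient. Since $Q_\omega(u,u)$ is quadratic rather than linear, invariance under adding trivial flexes is not automatic. It requires that $\cK_p$ lie in the full radical of $Q_\omega$, which is precisely what Lemma \ref{lem:radical} provides through the expansion displayed above.
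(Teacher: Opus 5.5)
Your proof is correct and follows the route the paper intends, though the paper gives no written proof: it states the corollary as an immediate consequence of Theorem~\ref{thm:stress-test} (with Proposition~\ref{prop:stress-obstruction} supplying the converse direction) and Definition~\ref{def:second-order-rigid}, with Lemma~\ref{lem:radical} making the condition meaningful on $\ker R_\M(G,p)/\cK_p$. Your explicit check that $Q_\omega(u+k,u+k)=Q_\omega(u,u)$ supplies the well-definedness step the paper leaves implicit.
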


This is the intrinsic Riemannian version of the classical second-order stress test.

\begin{remark}[Surjective rigidity operator]\label{rem:surjective}
	If $R_\M(G,p)$ is surjective, then $\ker R_\M(G,p)^*=\{0\}$.  Hence for every infinitesimal flex $u$ the equation
	\[
	R_\M(G,p)a=-\mathcal H_p(u,u)
	\]
	has a solution.  Therefore a framework with a surjective rigidity operator cannot be second-order rigid unless it is already infinitesimally rigid.  In an underbraced framework this is a useful diagnostic: the absence of self-stresses means that the second order criterion alone cannot obstruct any infinitesimal flexes.
\end{remark}

\subsection{Prestress stability}

Second-order rigidity allows the obstructing stress to depend on the infinitesimal flex.  A stronger and often easier-to-check condition asks for a single stress on the entire space of nontrivial infinitesimal flexes.

\begin{definition}[Riemannian prestress stability]\label{def:prestress}
	An admissible framework $(G,p)$ is \struc{prestress stable} if there exists a self-stress $\omega$ such that the induced quadratic form on $
	\ker R_\M(G,p)\,/ \,\cK_p$
	is positive definite.  
	
	Equivalently, $
	Q_\omega(u,u)>0$
	for every nonzero $u\in\ker R_\M(G,p)\,/\, \mathcal{K}_p$ after identifying vectors differing by a trivial infinitesimal motion.  According to Lemma~\ref{lem:radical}, this condition is independent of the representative of the quotient class.
\end{definition}

Let us now consider how prestress stability relates to second-order rigidity.
\begin{theorem}\label{thm:inf-prestress}
	Every infinitesimally rigid framework is prestress stable.
\end{theorem}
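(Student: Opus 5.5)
The plan is to observe that the statement is essentially vacuous at the level of the quotient. If $(G,p)$ is infinitesimally rigid, then by Definition~\ref{def:inf_rigidity} we have $\ker R_\M(G,p)=\cK_p$. Hence the quotient space $\ker R_\M(G,p)/\cK_p$ is the zero vector space. Prestress stability (Definition~\ref{def:prestress}) asks for a self-stress $\omega$ whose induced quadratic form on this quotient is positive definite. On the zero space, every quadratic form is positive definite, because the condition ``$Q_\omega(u,u)>0$ for every nonzero $u$'' quantifies over the empty set.

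So the key steps, in order, are as follows.
\begin{enumerate}[label=(\arabic*)]
\item Invoke Definition~\ref{def:inf_rigidity} to identify the quotient as $\{0\}$.
\item Exhibit a self-stress. The trivial stress $\omega=0$ always satisfies $R_\M(G,p)^*\omega=0$, so $\ker R_\M(G,p)^*$ is nonempty regardless of whether $R_\M(G,p)$ is surjective.
\item Use Lemma~\ref{lem:radical} only to note that $Q_\omega$ is well defined on the quotient. Then conclude positive definiteness vacuously.
\end{enumerate}

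I would also remark that any self-stress works, not just $\omega=0$. This matters in the case where $R_\M(G,p)$ is not surjective, so that nonzero stresses exist. It shows the conclusion does not rely on a degenerate choice. Concretely, if one prefers a stress with full support, any $\omega\in\ker R_\M(G,p)^*$ gives a form on $\{0\}$, and that form is trivially positive definite.

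The only genuine obstacle is a conventions issue rather than a mathematical one. One must confirm that the paper's notion of positive definiteness on the zero-dimensional quotient is the vacuous one, and that the zero vector counts as an admissible self-stress. I would state this explicitly in the proof, ideally with a short sentence noting that the same convention makes ``prestress stable $\Rightarrow$ second-order rigid'' consistent with Corollary~\ref{cor:stress-characterization}. In that corollary the condition over nonzero $u$ in the quotient is likewise vacuous for infinitesimally rigid frameworks. Beyond this, no analytic estimate or use of the Hessian formula of Proposition~\ref{prop:riemannian-edge-hessian} is needed.
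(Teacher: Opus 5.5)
Your proposal is correct and matches the paper's proof: both note that infinitesimal rigidity gives $\ker R_\M(G,p)/\cK_p=\{0\}$, so the positivity condition holds vacuously. Your explicit observation that $\omega=0$ is always a self-stress supplies the existence of a stress, which the paper leaves implicit.
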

\begin{proof}
	For an infinitesimally rigid framework it holds that $\ker R_\mathcal{M}(G,p)=\mathcal{K}_p$. Therefore, $\ker R_\mathcal{M}(G,p)\,/\,\mathcal{K}_p=\{0\}$ and $Q_\omega(u,u)>0$ trivially holds for every nonzero $u\in \ker R_\mathcal{M}(G,p)\,/ \,\mathcal{K}_p$.
\end{proof}

\begin{theorem}\label{thm:prestress-so}
	Every prestress-stable framework is second-order rigid.
\end{theorem}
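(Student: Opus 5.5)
The argument is a direct contrapositive that combines the stress obstruction of Proposition~\ref{prop:stress-obstruction} with the descent of $Q_\omega$ to the quotient from Lemma~\ref{lem:radical}. Assume $(G,p)$ is prestress stable, and fix a self-stress $\omega$ whose induced form on $\ker R_\M(G,p)/\cK_p$ is positive definite. To show second-order rigidity, we must check Definition~\ref{def:second-order-rigid}: every second-order flex $(u,a)$ has $u\in\cK_p$.

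Let $(u,a)$ be a second-order flex. The first equation of Definition~\ref{def:second-order-flex} gives $u\in\ker R_\M(G,p)$, so $u$ has a class $[u]$ in the quotient $\ker R_\M(G,p)/\cK_p$. By Proposition~\ref{prop:stress-obstruction}, applied to the chosen $\omega$, we get $Q_\omega(u,u)=0$. By Lemma~\ref{lem:radical}, $Q_\omega$ depends only on classes modulo $\cK_p$. Concretely, writing $u'=u+k$ with $k\in\cK_p$,
\[
Q_\omega(u',u')=Q_\omega(u,u)+2Q_\omega(k,u)+Q_\omega(k,k)=Q_\omega(u,u).
\]
This holds because $Q_\omega$ is symmetric, as a sum of Hessians, and $k$ lies in its radical. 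Hence the induced quadratic form vanishes on $[u]$.

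Positive definiteness then forces $[u]=0$, that is, $u\in\cK_p$. So $(G,p)$ is second-order rigid. Equivalently, we could invoke Corollary~\ref{cor:stress-characterization}: for every nonzero class $[u]$, the single stress $\omega$ gives $Q_\omega(u,u)>0\neq 0$.

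There is no real obstacle here. The only point needing care is that the quotient form is well defined, and that is exactly Lemma~\ref{lem:radical} together with the symmetry of the edge Hessians, which Proposition~\ref{prop:riemannian-edge-hessian} presents via the Riemannian Hessian of each $f_e$. In the degenerate case where the quotient is $\{0\}$, the conclusion $u\in\cK_p$ is immediate.
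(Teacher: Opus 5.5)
Your proof is correct and matches the paper's argument: Proposition~\ref{prop:stress-obstruction}, applied to the stabilizing stress, gives $Q_\omega(u,u)=0$, and positive definiteness on $\ker R_\M(G,p)/\cK_p$ then forces $u\in\cK_p$. You also check explicitly, via Lemma~\ref{lem:radical} and the symmetry of $Q_\omega$, that the quotient form is well defined, which the paper leaves implicit.
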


\begin{proof}
	If $(u,a)$ were a second-order flex with $u\notin \mathcal{K}_p$, Proposition~\ref{prop:stress-obstruction} would give $Q_\omega(u,u)=0$ for the stabilizing stress, contradicting positive definiteness on $\ker R_\M(G,p)\,/\,\mathcal{K}_p$.
\end{proof}

\subsection{Second-order rigidity implies local rigidity}

The following argument is the intrinsic analogue of the classical fact that a genuine finite flex produces a nontrivial second-order flex after taking the first nonzero asymptotic direction.  The proof becomes especially clear when using the language of stresses.

Recall that $\mathcal{O}_p$ denotes the trivial motions (see Equation \eqref{eqn:isometry-orbit}). Since $\M$ is connected, the diagonal group action of the isometry group on $\M^n$ is proper, so we can locally choose a smooth transverse slice $\mathcal{S}\subset\mathcal{M}^n$ through $p$ such that
\[
T_p\mathcal{M}^n=\cK_p\oplus T_p\mathcal{S}
\]
using the Slice Theorem \cite[Prop.\ 2.3.1]{palaisslice}. As in the first-order local-rigidity argument, every sufficiently close configuration can be moved by a nearby isometry into $\mathcal{S}$.

\begin{theorem}\label{thm:so-local}
 If $(G,p)$ is second-order rigid, then it is locally rigid.
\end{theorem}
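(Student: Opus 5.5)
I will argue by contraposition: assuming $(G,p)$ is flexible, I will produce a second-order flex $(u,a)$ with $u\notin\cK_p$. Flexibility gives a sequence $q^{(m)}\to p$ with $F_G(q^{(m)})=F_G(p)$ and $q^{(m)}\notin\cO_p$. Using the slice $\mathcal{S}$ fixed just before the theorem, I write $q^{(m)}=h_m\cdot s^{(m)}$ with $s^{(m)}\in\mathcal{S}$. By isometry invariance $F_G(s^{(m)})=F_G(p)$, and $s^{(m)}\neq p$ because $q^{(m)}\notin\cO_p$. I also have $s^{(m)}\to p$ once $\mathcal{S}$ is shrunk, since the slice map is a local homeomorphism onto a neighborhood. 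This reduces the problem to sequences inside $\mathcal{S}$, whose tangent space $T_p\mathcal{S}$ is complementary to $\cK_p$.

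Next I pass to normal coordinates. Let $w_m:=(\Log_{p_1}s^{(m)}_1,\dots,\Log_{p_n}s^{(m)}_n)\in T_p\M^n$, which is defined for large $m$. Set $\varepsilon_m:=\|w_m\|\to0$ and $\hat w_m:=w_m/\varepsilon_m$. After passing to a subsequence, $\hat w_m\to u$ with $\|u\|=1$, and $u\in T_p\mathcal{S}$ because $\mathcal{S}$ is a submanifold through $p$ (its image under $\Log$ is tangent to $T_p\mathcal{S}$). Hence $u\notin\cK_p$. Put $\tilde F:=F_G\circ\Exp_p$, where $\Exp_p$ is the product exponential. Since geodesics through $p$ have zero covariant acceleration, $d\tilde F_0=R_\M(G,p)$, and Proposition~\ref{prop:curve-second-derivative} gives $d^2\tilde F_0[v,v]=\mathcal H_p(v,v)$. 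Taylor's theorem then yields
\[
0=\tilde F(w_m)-\tilde F(0)=\varepsilon_m R_\M(G,p)\hat w_m+\tfrac{\varepsilon_m^2}{2}\mathcal H_p(\hat w_m,\hat w_m)+O(\varepsilon_m^3).
\]
Dividing by $\varepsilon_m$ and letting $m\to\infty$ gives $R_\M(G,p)u=0$, so $u$ is a nontrivial infinitesimal flex.

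The main obstacle is producing the acceleration $a$. A naive rescaling does not work, because $R_\M(G,p)\hat w_m$ is only $O(\varepsilon_m)$, and its direction is uncontrolled. I will avoid constructing $a$ by using Theorem~\ref{thm:stress-test} instead: it suffices to show $Q_\omega(u,u)=0$ for every self-stress $\omega$. Pairing the Taylor expansion with $\omega$ kills the first-order term, since $\omega^\top R_\M(G,p)=0$. This leaves $0=\tfrac{\varepsilon_m^2}{2}Q_\omega(\hat w_m,\hat w_m)+O(\varepsilon_m^3)$. Dividing by $\varepsilon_m^2$ and taking the limit gives $Q_\omega(u,u)=0$. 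This is the step where stresses make the argument clean, and the only care needed is checking that the $O(\varepsilon_m^3)$ remainder is uniform, which follows from the smoothness of $\tilde F$ near $0$.

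Theorem~\ref{thm:stress-test} then provides $a$ such that $(u,a)$ is a second-order flex. Since $u\in T_p\mathcal{S}\setminus\{0\}$ and $T_p\mathcal{S}\cap\cK_p=\{0\}$, we get $u\notin\cK_p$. This contradicts second-order rigidity, so $(G,p)$ is locally rigid.
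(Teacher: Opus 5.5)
Your proposal is correct and follows the paper's proof: contraposition via the slice, a normalized sequence in $\mathcal S$ whose limit $u$ is a nontrivial infinitesimal flex, pairing the second-order Taylor expansion with an arbitrary self-stress to get $Q_\omega(u,u)=0$, and then Theorem~\ref{thm:stress-test}. The only difference is the choice of coordinates. You use the product exponential $\Exp_p$ on all of $\M^n$, which makes the coordinate Hessian equal to $\mathcal H_p$ before any stress is paired, but requires your secant argument to place $u$ in $T_p\mathcal S$. The paper instead uses a chart $\chi$ on the slice with $d\chi_0=\mathrm{id}$ and relies on $d(\Psi_\omega)_p=0$ to identify the coordinate second derivative with $Q_\omega$.
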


\begin{proof}
	We prove the statement's contraposition: Suppose the framework is flexible. Since the identity component of $\textsc{Isom}(\M,g)$ acts properly on $\M^n$, there exists a local slice $\mathcal{S}\subset\M^n$ through $p$ for which $T_p\M^n=\mathcal{K}_p\oplus T_p\mathcal{S}$, so that $\mathcal{S}\cap \mathcal{O}_p=\{p\}$ and so that every sufficiently close realization to $p$ can be moved into $\mathcal{S}$ by an isometry. 
	After applying such isometries, there exists a sequence
	$(q_k)_{k\in\mathbb{N}}\subset\mathcal{S}$, for which $q_k\to p$ and $q_k\neq p$ for $k\in \mathbb{N}$ so that $F_G(q_k)=F_G(p)$.

	Choose a smooth chart $\chi:U\subset T_p\mathcal{S}\rightarrow \mathcal{S}$ with $\chi(0)=p$ and $d\chi_0=\mathrm{id}_{T_p\mathcal{S}}$ and write
	\[
	q_k=\chi(\xi_k)\quad\text{for some}\quad \xi_k\in T_p\mathcal{S}\quad\text{so that}
	\quad r_k:=\norm{\xi_k}\to0.
	\]
	After passing to a subsequence, we can assume that
	\[
	u_k:=\frac{\xi_k}{r_k}\longrightarrow u
	\]
	for some $u\in T_p\mathcal{S}$ with $\norm{u}=1$. Since the set of admissible placements $\mathcal{C}_G$ is an open set, $F_G$ is smooth in a neighborhood of $p$ and due to the convergence of $(q_k)_{k\in\mathbb{N}}$ it holds that $q_k\in \mathcal{C}_G$ for all sufficiently large $k\in \mathbb{N}$. The first-order Taylor expansion of the constraint map then yields
	\[F_G(\chi(\xi))=F_G(p)+R_\M(G,p)\xi+o(\|\xi\|).\]
	When applying this equation to the sequence $(\xi_k)_{k\in\mathbb{N}}$, we obtain
	\[0=R_\mathcal{M}(G,p)\xi_k+o(r_k).\]
	Dividing by $r_k$ and passing to the limit yields $
	R_\M(G,p)u=0$.
	Since $u\in T_p\mathcal{S}$ and $T_p\mathcal{S}\cap \mathcal{K}_p=\{0\}$, $u$ therefore is a nontrivial infinitesimal flex.
	
	Now let $\omega$ be any self-stress and consider the scalar stress potential $\Psi_\omega(q)=\langle \omega,\,F_G(q)\rangle$, which has the property that $d(\Psi_\omega)_p=0$ and $\mathrm{Hess}\,\Psi_\omega(p)=Q_\omega$ (see Equation \eqref{eq:stresspotential}). Therefore, the second derivative of $\Psi_\omega \circ \chi$ at $0$ agrees with the Riemannian Hessian:
	\[d^2(\Psi_\omega\circ \chi)_0[u,u]=\mathrm{Hess}\,\Psi_\omega(p) [u,u] = Q_\omega(u,u).\]
	Putting all of these observations together, we find that the second-order Taylor expansion of $\Psi\circ\chi$ at $\chi(0)=p$ has the form
	\[\Psi_\omega(\chi(\xi))=\Psi_\omega(p)+\frac{1}{2}Q_\omega(\xi,\xi)+o(\|\xi\|^2).\]
	Since $F_G(q_k)=F_G(p)$, it also holds that $\Psi_\omega(q_k)=\Psi_\omega(p)$. Therefore,
	\[
	0
	=\frac12 {r_k}^2 Q_\omega(u_k,u_k)+o({r_k}^2).
	\]
	After dividing by ${r_k}^2$ and passing to the limit, it holds that $Q_\omega(u,u)=0$
	for every self-stress $\omega$.  By Theorem~\ref{thm:stress-test}, there exists $a\in T_p\mathcal{M}^n$ such that $(u,a)$ is a second-order flex.  Since $u\notin \cK_p$, this implies that $(G,p)$ is second-order flexible.
\end{proof}

In this proof, an important step is to directly pair a self-stress $\omega$ with the second-order Taylor expansion. Otherwise, one generally has extra terms in arbitrary coordinates known as the Christoffel symbols associated with the Riemannian metric. However, these terms are multiplied by $d(\Psi_\omega)_p$ and can therefore safely be ignored after introducing the self-stress.

\begin{corollary}\label{cor:prestress-local}
	Prestress stability implies local rigidity.
\end{corollary}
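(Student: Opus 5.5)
The corollary follows by chaining two implications already established. First, Theorem~\ref{thm:prestress-so} shows that a prestress stable framework is second-order rigid. Second, Theorem~\ref{thm:so-local} shows that a second-order rigid framework is locally rigid. Composing them, a prestress stable admissible framework $(G,p)$ is locally rigid in the sense of Definition~\ref{def:local_rigidity}. The plan is simply to state this composition and to check that the hypotheses line up.

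The checks are as follows. Both theorems are stated for admissible frameworks on a connected Riemannian manifold, which is our standing assumption, so the slice argument in Theorem~\ref{thm:so-local} applies without change. Theorem~\ref{thm:prestress-so} relies on Proposition~\ref{prop:stress-obstruction} together with the well-definedness of $Q_\omega$ on $\ker R_\M(G,p)/\cK_p$, which is Lemma~\ref{lem:radical}. Theorem~\ref{thm:so-local} uses only the definition of second-order rigidity and Theorem~\ref{thm:stress-test}. Neither step needs regularity of $p$ or analyticity of $g$, so the corollary holds in full generality.

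One could also argue directly, without passing through second-order rigidity. Suppose $(G,p)$ is flexible. The proof of Theorem~\ref{thm:so-local} produces a unit vector $u\in T_p\mathcal S\cap\ker R_\M(G,p)$ with $Q_\omega(u,u)=0$ for every self-stress $\omega$. This includes the stabilizing stress, which contradicts positive definiteness, since $u\notin\cK_p$ and so $u$ is nonzero in the quotient. I would mention this only as a remark. There is no real obstacle here; the only point to watch is that the quotient-class identification is consistent, and Lemma~\ref{lem:radical} already guarantees this.
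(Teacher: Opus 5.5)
Your proposal is correct and matches the paper: the corollary is given without a separate proof because it is exactly the composition of Theorem~\ref{thm:prestress-so} (prestress stable $\Rightarrow$ second-order rigid) and Theorem~\ref{thm:so-local} (second-order rigid $\Rightarrow$ locally rigid). Your optional direct argument applies the stabilizing stress to the limit vector $u$ produced in the proof of Theorem~\ref{thm:so-local}, so it is the same chain with the intermediate notion inlined.
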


Thus, the hierarchy relevant to the intrinsic Riemannian setting therefore is
\[
\boxed{
		\text{infinitesimally rigid}
	\Longrightarrow
	\text{prestress stable}
	\Longrightarrow
	\text{second-order rigid}
	\Longrightarrow
	\text{locally rigid}.
}
\]

Already in the Euclidean setting, none of these implications are reversible (cf.\ \cite{connelly1996}). Recent work has generalized these observations, particularly to frameworks constrained to a certain smooth manifold \cite{polytoperigidity2}. This makes it likely that none of these implications are reversible for intrinsic bar-and-joint frameworks on general Riemannian manifolds.

\begin{remark}
	Deciding second-order rigidity or prestress stability makes the computation of the bilinear form $Q_\omega$ necessary which, in turn, demands the computation of all edge Hessians. Typically, this is significantly harder than in Euclidean space, as it requires the evaluation of covariant derivatives at the realization $p$. Analytically finding the Riemannian Hessian is often out of reach. However, we can use numerical differentiation schemes and ODE-solvers to approximate the coefficients of $Q_\omega$. If the matrix associated with this bilinear form is positive or negative definite and if the corresponding eigenvalues are bounded away from $0$, this approach provides reasonable numerical evidence.
\end{remark}

Let us now return to Example \ref{ex:hyperboloid} and use the tools developed in this section to show that the corresponding intrinsic framework is prestress stable. 

\begin{example}\label{ex:second-order-comp}
	Again consider the 3-cycle embedded at height $z=0$ on the one-sheeted hyperboloid $\mathcal{H}$ with the induced Euclidean metric. We show that this framework is prestress stable using different techniques than before. Using the parametrization of $\mathcal{H}$ in terms of $\theta\in[0,2\pi)$ and $z\in \mathbb{R}$ from before with the induced local frame $(\partial_\theta,\partial_z)$. Therefore, every infinitesimal flex has the form $u_i=a_i\partial_\theta+b_i\partial_z$. By the first variation formula for geodesic length, the bar constraints imply that $a_2-a_1=0$, $a_3-a_2=0$ and $a_1-a_3=0$, so that $a_1=a_2=a_3$. Since $\mathcal{H}$ is a surface of revolution, $a_i\neq 0$ and $b_i=0$ corresponds to a trivial infinitesimal rotation, so we can set $a_1=a_2=a_3=0$ to factor out the trivial infinitesimal flexes. In this local frame, the nontrivial infinitesimal flexes are therefore generated by linear combinations of the transverse motions $u_i=b_i(\partial_z)\lvert_{p_i}$ with $u_j=0$ for $j\neq i$. We set $u=(b_1(\partial_z)\lvert_{p_1},\,b_2(\partial_z)\lvert_{p_2},\,b_3(\partial_z)\lvert_{p_3})$ for $b_i\in\mathbb{R}$. This already generates a three-dimensional space of infinitesimal motions. By the Maxwell condition from Corollary \ref{cor:maxwell}, there is one more infinitesimal flex than expected. Since at any vertex, the two geodesics directions are opposite and have equal length, there exists an equilibrium stress of the form $\omega=(1,1,1)$. 
	
	Let us now consider the edge $ij$ of length $\ell>0$ with normal endpoints velocities $b_i$ and $b_j$ as before. Since the Gaussian curvature of the one-sheeted hyperboloid is $K=-1$ in the points on the geodesic $\gamma(\theta)=(\cos(\theta),\sin(\theta),0)$ at height $z=0$ with unit-speed parametrization, its normal Jacobi field $J(s)=j(s)\partial_z$ satisfies
	\[j''(s)-j(s)=0,\quad j(0)=b_i\quad\text{ and }\quad j(\ell)=b_j\]
	We can analytically solve this differential equation and find that
	\[j(s)=\frac{\sinh(\ell-s)}{\sinh(\ell)}b_i+\frac{\sinh(s)}{\sinh(\ell)}b_j.\]
	Therefore, the edge Hessians of the constraint map's components are given by (cf.\ \cite[Thm.\ 4.1.1]{JostRieGeo})
	\begin{eqnarray*}
	\left(\mathcal{H}_p(u,u)\right)_{ij}&=&\ell\cdot\int_{0}^\ell \left(j'(s)^2+j(s)^2\right) \,ds\\&=&\ell\cdot \frac{\cosh(\ell)\cdot({b_i}^2+{b_j}^2)-2b_ib_j}{\sinh(\ell)}\\&=&\ell\cdot \frac{(b_i-b_j)^2+(\cosh(\ell)-1)\cdot({b_i}^2+{b_j}^2)}{\sinh(\ell)}
	\end{eqnarray*}
	The integral comes from so-called \struc{variation fields} of the geodesic $\gamma$ (see also \cite[p.\ 153f.]{Lee2018}). Since $\ell>0$, it holds that $\sinh(\ell)>0$ and $\cosh(\ell)>1$. This implies that $\left(\mathcal{H}_p(u,u)\right)_{ij}>0$ as long as $b_i\neq 0\neq b_j$. Therefore, 
	\[Q_\omega(u,u)~=~\sum_{ij\in E}\omega_{ij}\cdot\left(\mathcal{H}_p(u,u)\right)_{ij}~=~\sum_{ij\in E}\left(\mathcal{H}_p(u,u)\right)_{ij}.\]
	Every summand is nonnegative and the sum is zero if and only if $b_1=b_2=b_3=0$ by a previous argument. However, this exactly represents the condition that $u$ is trivial. Thus, $Q_\omega(u,u)>0$ for every nontrivial infinitesimal flex $u$ and the bar-joint framework is prestress stable according to Definition \ref{def:prestress}.
\end{example}

\section{Specialization to known examples}\label{section:specialization-examples}
Let us now consider three established settings in which bar-and-joint frameworks have been considered, as well as one new setting arising from the theory developed in this article. The established settings that we consider are Euclidean space, the unit sphere and the two-sheeted hyperboloid, while the new result is on the statistical probability simplex. 

\subsection{Recovery of the Euclidean rigidity matrix}

Take $\M=\R^d$ with the standard Euclidean metric. Then,
\[
 \Log_{p_i}(p_j)=p_j-p_i,
 \qquad
 \Log_{p_j}(p_i)=p_i-p_j.
\]
The constraint map $F_G(p)$ is then given by the squared edge lengths, so it holds that $F_G(p)=\left(\frac{1}{2}\|p_i-p_j\|^2\right)_{ij\in E}$. The edge Equation \eqref{eq:edge-equation} therefore becomes
\[
 0\,=\,(p_j-p_i)\cdot u_i+(p_i-p_j)\cdot u_j\,=\, (p_i-p_j)\cdot(u_j-u_i).
\]
Up to multiplication by $(-1)$, this is a row of the classical Euclidean rigidity matrix $R_G(p)$.  Thus, Definition~\ref{def:rigidity-operator} is a direct intrinsic extension of the Euclidean infinitesimal rigidity theory.

Let us now fix a curve $q(t)=p+t\cdot u + \frac{1}{2}t^2\cdot a + o(t^2)$ with $q'(0)=u$ and $q''(0)=a$. The second-order condition \eqref{eq:so2} then becomes 
\[\frac{\partial^2}{\partial t^2}F_G(q(t))\big\lvert_{t=0}=R_G(p)\cdot a + R_G(u)\cdot u=0,\]
which recovers the standard second-order rigidity criterion (e.g.\ \cite{connelly1996}).

\subsection{The unit sphere}

Let $\M=\Sph^d\subset\R^{d+1}$ with the induced metric from the ambient Euclidean space.  For $p,q\in\Sph^d$ with $q\neq -p$, set
\[
 \theta=d_{\Sph^d}(p,q)=\arccos(\langle p,\, q\rangle ),
 \qquad 0\le\theta<\pi.
\]
For $p\neq  q$ and $\theta=\arccos(\langle p,\,q\rangle)$ the Riemannian logarithm is given by
\begin{equation*}\label{eq:sphere-log}
 \Log_p(q)=\frac{\theta}{\sin\theta}
 \bigl(q-(\cos\theta)p\bigr).
\end{equation*}
The expression extends continuously to $0$ at $q=p$.
Accordingly, for an edge $ij$ with
\[
 \theta_{ij}=\arccos(\langle p_i,\, p_j\rangle)<\pi,
\]
which is exactly the case when $p_i\neq -p_j$, the intrinsic spherical infinitesimal constraint is
\begin{align}\label{eq:sphere-row}
 0={}&
 \frac{\theta_{ij}}{\sin\theta_{ij}}\cdot \left(
 \bigl\langle p_j-(\cos\theta_{ij})p_i,\,u_i\bigr\rangle \,+ \,
 \bigl\langle p_i-(\cos\theta_{ij})p_j,\,u_j\bigr\rangle\right),
\end{align}
where $u_i\perp p_i$ and $u_j\perp p_j$. Since the scalar factor $\theta_{ij}/\sin\theta_{ij}$ is nonzero for $0<\theta_{ij}<\pi$, it may be ignored without changing the infinitesimal flex space. Therefore, \eqref{eq:sphere-row} can be rewritten as 
\begin{align}
	0={}&
	\bigl\langle p_j-(\cos\theta_{ij})p_i,\,u_i\bigr\rangle \,+ \,
	\bigl\langle p_i-(\cos\theta_{ij})p_j,\,u_j\bigr\rangle,
	\nonumber\\
		={}&\bigl\langle p_j,\,u_i\rangle-\langle p_i,p_j\rangle \cdot \underbrace{\langle p_i,\,u_i\bigr\rangle}_{=0} \,+ \,
	\bigl\langle p_i,\, u_j\rangle -\langle p_i,p_j\rangle \cdot \underbrace{\langle p_j,\,u_j\bigr\rangle}_{=0},
	\nonumber\\
	={}&\bigl\langle p_j,\,u_i\bigr\rangle \,+ \,
	\bigl\langle p_i,\,u_j\bigr\rangle,
	\label{eq:spherical_condition}
\end{align}
which recovers the spherical rigidity matrix (cf.\ \cite{sphericalrigiditymatrix}). 
The identity component of the isometry group is $\mathrm{SO}(d+1)$, which has dimension
 $\frac{d(d+1)}2$.
For a placement with discrete stabilizer, an infinitesimally rigid framework on $\Sph^d$ therefore satisfies the necessary count
\begin{equation}\label{eq:sphere-count}
 |E|\ge dn-\frac{d(d+1)}2,\nonumber
\end{equation}
which is consistent with the existing combinatorial classification on the $d$-sphere (cf.\ \cite{NixonOwenPower2012,SaliolaWhiteley2007}).

\subsubsection{The regular cube}\label{section:regular-cube}
Let us now consider the cube graph together with the regular embedding on the sphere, for which the vertices are given by $\frac{1}{\sqrt{3}}(\pm 1, \pm 1,\pm 1)$ (see \Cref{fig:regcube}). For the explicit computations on the rigidity operator, see Appendix Section \ref{section:regcube_computations}. The cube framework provides a blueprint for checking the second-order rigidity results from \Cref{section:second-order-theory} on a concrete example: When realized on the sphere, the cube is not first-order rigid, as it has 12 edges, leading to the deficient Maxwell condition $|E|=12<13=2\cdot 8 - \frac{2\cdot 3}{2}$, which means that the framework is underconstrained. Indeed, a simple computation reveals that it possesses two nontrivial infinitesimal flexes and one self-stress $\omega=(1,\dots,1)\in \mathbb{R}^{12}$, which is known as the \struc{Wachspress stress} \cite{wintercpf}  (see Appendix Section \ref{section:cube:rig_matrix}). 

\begin{figure}[h!]
	\centering
	\includegraphics[width=0.35\linewidth]{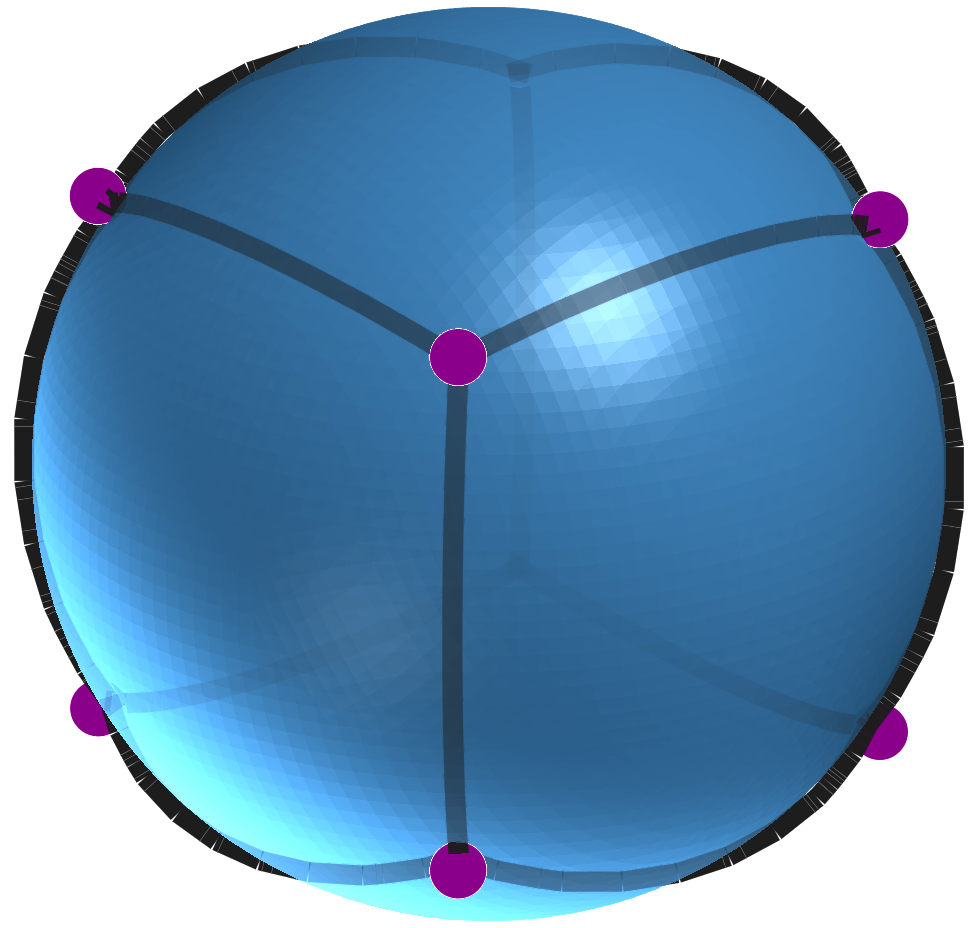}
	\caption{The regular cube embedded on the sphere, with geodesic arcs as edges.}
	\label{fig:regcube}
\end{figure}

At a vertex $p_i\in \mathbb{S}^2$, the nontrivial infinitesimal flexes are $u^{(1)}$ and $u^{(2)}$, which can be obtained from a basis of the linear subspace generated from the three vectors $v^{(1)}=p_{i,1}\cdot \left(e_x \times p_i\right)$, $v^{(2)}=p_{i,2}\cdot \left(e_y \times p_i\right)$ and $v^{(3)}= p_{i,3}\cdot \left(e_z \times p_i\right)$ with the standard unit basis $(e_x,e_y,e_z)$ of $\mathbb{R}^3$. Geometrically, these vectors represent a twist of two opposite faces. Since $\pm e_x, \pm e_y$ and $\pm e_z$ are no vertices of the cube, we can simply choose $u^{(1)}= v^{(1)}$ and $u^{(2)}= v^{(2)}$ as a basis for the space of nontrivial infinitesimal flexes  (cf.\ Appendix Section \ref{section:cube_inf_flexes}). 

Consider the curve $c(t)=\langle p_i(t),\,p_j(t)\rangle$ with geodesics $p_k(t)$ on $\mathbb{S}^2$ that satisfy $p_k(0)=p_k$ and ${p_k}'(0)=u_k$ for an infinitesimal flex $u$ and $k\in\{i,j\}$. The geodesic differential equation for a curve $\gamma$ on $\mathbb{S}^2$ becomes $\|\dot{\gamma}^2\|+\langle \ddot{\gamma},\,\gamma\rangle=0$. In addition, the vector field $\gamma''(t)$ must be orthogonal to $T_{\gamma(t)}\mathbb{S}^2$ for any geodesic (see Definition \ref{def:geodesics}) and for every $t$, which turns the geodesic differential equation into $\gamma''(t)=-\|\gamma'(t)\|^2\cdot \gamma(t)$. Therefore, for our curves $p_k$ this expression becomes ${p_k}''(0)=-\|u_k\|^2 \cdot p_k$ at $t=0$.
The first two derivatives of the curve $c$ then are  
\[c'(0)=\langle u_i,\,p_j\rangle +\langle p_i,\,u_j\rangle=0 \quad\text{ and }\quad c''(0)=2\langle u_i,u_j\rangle - \langle p_i,p_j\rangle \cdot (\|u_i\|^2+\|u_j\|^2).\] 
If we now choose $h(c)=\frac{1}{2}\arccos(c)^2$, then $h(c(t))=\frac{1}{2}d_{\mathbb{S}^2}(p_i(t),p_j(t))^2$ with the derivative $h'(c)=-\frac{\arccos(c)}{\sin(\arccos(c))}$. At $t=0$, this function satisfies the equation 
\[\frac{\partial^2}{\partial t^2}h(c(t))\bigg\lvert_{t=0}=h''(c(0))\cdot \underbrace{\left(c'(0)\right)^2}_{=0}+h'(c(0))\cdot c''(0)=h'(c(0))\cdot c''(0).\] 
Consequently, the spherical edge Hessians have the form
\begin{eqnarray}\label{eq:spherical_edge_hessian}
	H_{ij}(u,u)=-\frac{\arccos(\langle p_i,\, p_j\rangle)}{\sin(\arccos(\langle p_i,\, p_j\rangle))}\cdot \left(2\langle u_i,\,u_j\rangle- \langle p_i,p_j\rangle\cdot \left(\|u_i\|^2+\|u_j\|^2\right)\right)
\end{eqnarray}
In Appendix Section \ref{section:cube:edge_hessian}, we use $H_{ij}$ to derive an expression for the stress Hessian $Q_\omega$ in terms of the general nontrivial flex $u=\lambda_1\cdot u^{(1)}+\lambda_2\cdot u^{(2)}$ with variables $\lambda_1$, $\lambda_2$:
\[Q_\omega(u,u)~=~{16}\cdot \frac{\arccos\left(\frac{1}{3}\right)}{\sqrt{2}\cdot 3} \cdot \left[\left(\lambda_1-\frac{\lambda_2}{2}\right)^2+\frac{3}{4}{\lambda_2}^2\right].\]
Therefore, $Q_\omega(u,u)>0$ for all $(\lambda_1,\lambda_2)\in \mathbb{R}^2\setminus\{0\}$ and the cube is prestress stable according to Definition \ref{def:prestress}. Consequently, this framework is rigid by Corollary \ref{cor:prestress-local}, but not infinitesimally rigid.

\subsection{The two-sheeted hyperboloid}

Consider the two-sheeted hyperboloid $\mathcal{H}$ in $\mathbb{R}^3$ cut out by the implicit equation $x^2+y^2-z^2=-1$ together with the restriction of the Minkowski metric $g_{2,1}=dx^2+dy^2-dz^2$, which is also known as the standard hyperbolic Riemannian metric on $\mathcal{H}$. This Riemannian manifold is depicted in \Cref{fig:hyperboloid}. This metric produces a constant Gaussian curvature on $\mathcal{H}$. We deliberately use the two-sheeted hyperboloid instead of the one-sheeted version which is more common in the literature, as the latter is not a Riemannian manifold when equipped with the Minkowski metric.

\begin{figure}[h!]
	\centering
	\includegraphics[width=0.35\linewidth]{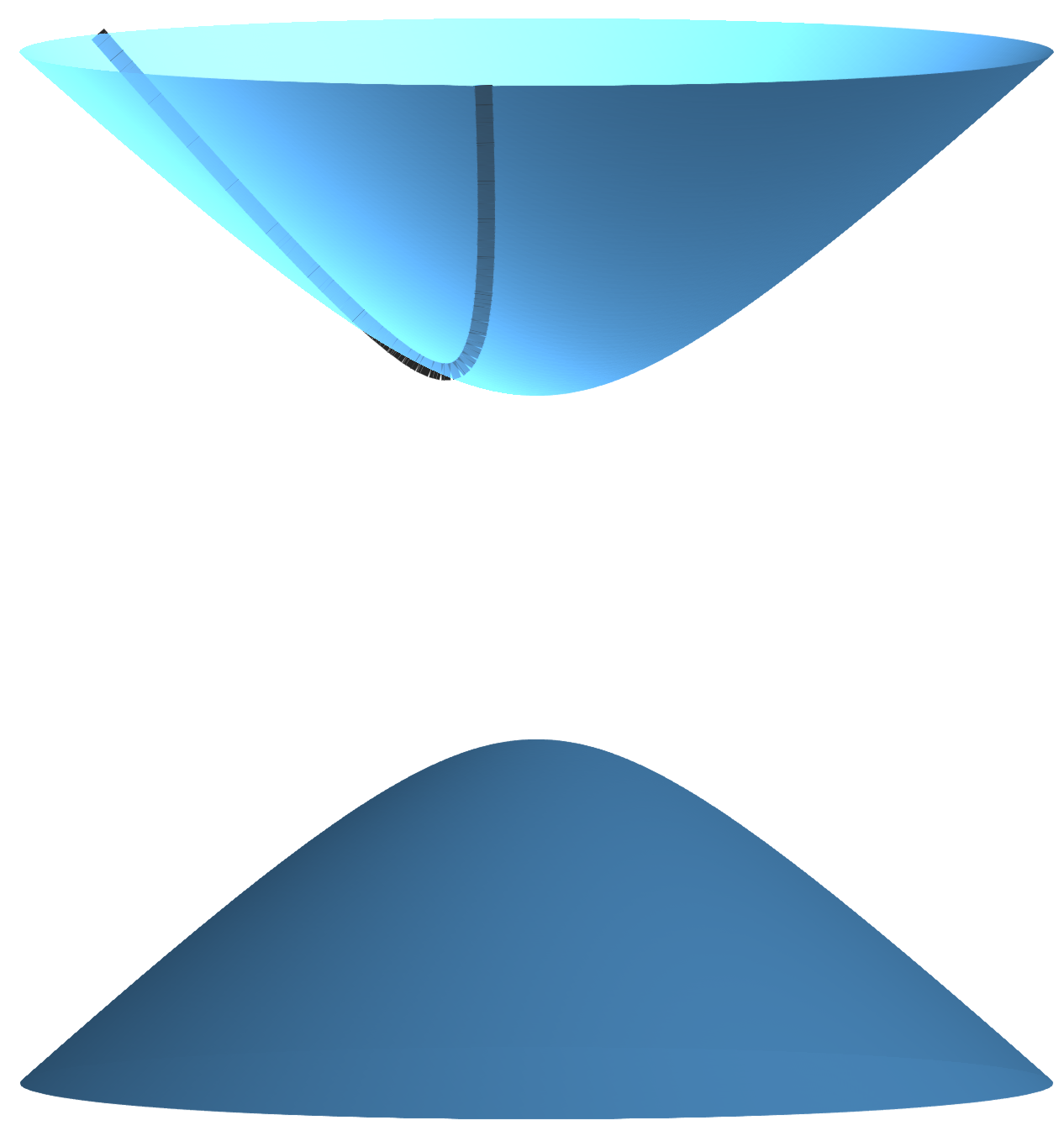}
	\hspace*{20mm}
	\includegraphics[width=0.32\linewidth]{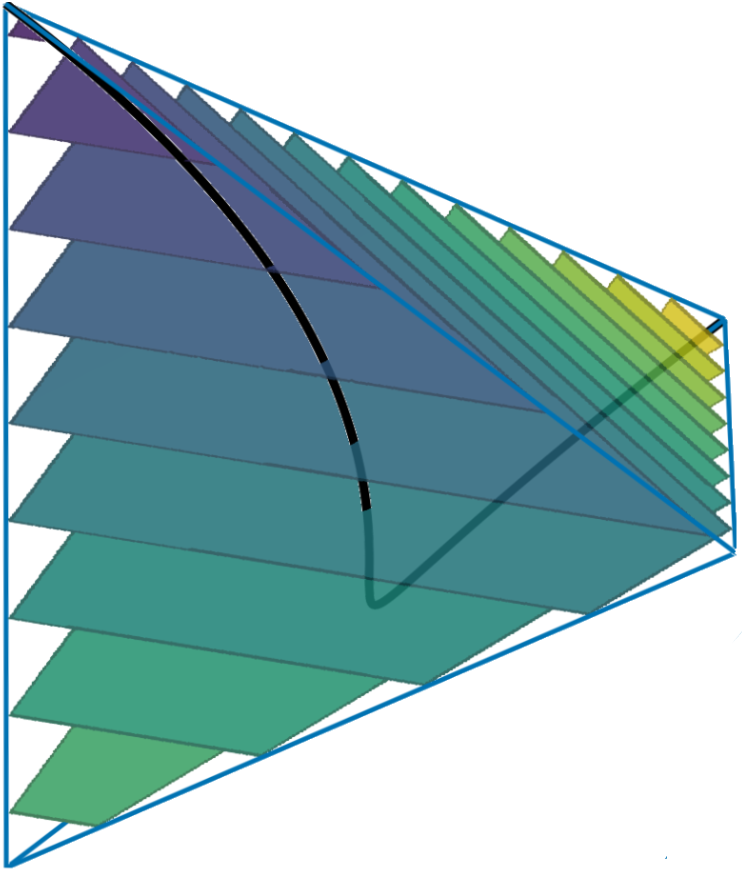}
	\caption{\textbf{(l.)} The two-sheeted hyperboloid with a geodesic. \textbf{(r.)} Normal spaces of the twisted cubic $\rho\mapsto (\rho^3,\,3\rho^2(1-\rho),\,3\rho(1-\rho)^2,\,(1-\rho)^3))$ for $\rho\in[0,1]$ with its normal spaces inside a projection of the probability simplex $\Delta_3$ to $\mathbb{R}^3$.}
	\label{fig:hyperboloid}
\end{figure}

On the hyperboloid, the geodesic distance between the points $p$ and $q$ that lie on the same sheet is given by $d_{2,1}(p,q)=\mathrm{arccosh}(-\langle p,q\rangle_{2,1})$ and the geodesics are given by intersections of either of the hyperboloid's sheets with 2-dimensional linear subspaces through the origin \cite[Prop.\ 5.28]{Lee2018}. Therefore, it holds that 
\[F_G(p)=\left(\frac{1}{2} d_{2,1}(p_i,p_j)^2\right)_{ij\in E}=\left(\frac{1}{2}\mathrm{arccosh}(-\langle p_i,p_j\rangle_{2,1})^2\right)_{ij\in E}.\]
Differentiating this expression provides the identity
\[\frac{\mathrm{arccosh}(-\langle p_i,p_j\rangle_{2,1})}{\sqrt{(\langle p_i,p_j\rangle_{2,1})^2-1}}\cdot \left(\langle u_i,p_j\rangle_{2,1}+\langle p_i,u_j\rangle_{2,1}\right)=0,\]
which can be used for finding infinitesimal flexes $u$. This expression simplifies to the following: $\langle u_i,p_j\rangle_{2,1}+\langle p_i,u_j\rangle_{2,1}=0$ in case $\langle p_i,p_j\rangle_{2,1}\neq -1$, which is always true as long as $p_i \neq p_j$ lie on the same sheet. This observation is consistent with existing results from the literature (cf.\ \cite{SaliolaWhiteley2007}).

\subsection{The statistical probability simplex}

The \struc{open statistical probability simplex} $\Delta_{d-1}\subset \mathbb{R}^d_{>0}$ is defined by
\[\Delta_{d-1}=\left\{(x_1,\dots,x_d)\in \mathbb{R}^d_{>0}~\Bigg\lvert~\sum_{i=1}^d x_i=1\right\}.\]
It can be turned into a Riemannian manifold when considering the \struc{Fisher metric} $g=\sum_{i=1}^d \frac{1}{x_i} (dx_i)^2$ on $\Delta_{d-1}$ (cf.\ \cite{informationgeometry}). If we consider a framework $(G,p)$ on the statistical probability simplex, the corresponding Fisher distance on the edges is given by
\[d_\Delta(p,q)=2\arccos\left(\sum_{i=1}^d \sqrt{p_i\cdot q_i}\right).\]
If we now differentiate the squared distance function $F_G(p)=\left(\frac{1}{2} d_\Delta (p_i,p_j)^2\right)_{ij \in E}$, we obtain the rigidity operator 
\begin{eqnarray}
	\label{eqn:fisher-rigidity-operator}
	\hspace*{12mm}\left(R_{\Delta_{d-1}}(G,p)u\right)_{ij} \,=\, -\frac{2\arccos(\sum_{k=1}^d\sqrt{p_{i,k} \cdot p_{j,k}})}{\sqrt{1-\left(\sum_{k=1}^d{\sqrt{p_{i,k}\cdot p_{j,k}}}\right)^2}}\cdot \left( \sum_{k=1}^d \sqrt{\frac{p_{j,k}}{p_{i,k}}}u_{i,k}+\sum_{k=1}^d\sqrt{\frac{p_{i,k}}{p_{j,k}}}u_{j,k}\right)
\end{eqnarray}
for an infinitesimal flex $u\in \mathbb{R}^{nd}$ that additionally satisfies the tangency condition $0=\sum_{k=1}^d u_{i,k}$ at any fixed vertex $i\in V(G)$, where $u_{i,k}$ denotes the $k$-th component of the vector $u_i$. Since $u$ additionally is supposed to lie in the kernel of the rigidity operator, we can set the above equation to $0$ and ignore the factor. 

Now, we choose square root coordinates of the simplex: For $p=(p_1,\dots,p_n)\in \Delta_{d-1}$ with $\sum_{k=1}^d p_{i,k}=1$ we choose $x_i=2\sqrt{p_i}$ so that $\sum_{i=1}^d {x_i}^2=4\sum_{k=1}^d p_{i,k}=4$ and $x_i\in 2\cdot \mathbb{S}^{d-1}$. If now $p(t)$ is a continuously differentiable curve in the simplex with $u_{i,k}=\dot{p}_{i,k}$, we can differentiate in the new coordinates to obtain $\dot{x}_{i,k}=\frac{u_{i,k}}{\sqrt{p}_{i,k}}$. Thus, the tangency condition becomes $x_i^\top \dot{x}_i=0$ and Equation \eqref{eqn:fisher-rigidity-operator} becomes $x_i^\top\dot{x}_j+x_j^\top \dot{x}_i=0$ up to multiplication with scalar factors.

\begin{remark}
	The condition $x_i^\top\dot{x}_j+x_j^\top \dot{x}_i=0$ for infinitesimal flexes looks identical to the spherical condition \ref{eq:spherical_condition}. However, contrary to the sphere, the isometry group of $\Delta_{d-1}$ is discrete. Therefore, for $d\geq 2$ every finite intrinsic framework on the open probability simplex is locally flexible, as in square root coordinates sufficiently small infinitesimal rotations on the positive orthant of a sphere preserve the geodesic edge lengths. Since the global isometries of $\Delta_{d-1}$ are coordinate permutations, these infinitesimal rotations are nontrivial and locally extend to smooth motions. Consequently, while the rigidity operator presented in this section is a useful differential formula, it does not produce locally rigid frameworks after factoring out the Lie group of isometries. For that specific reason, Remark \ref{rem:local_isometry_group} discusses using local isometries instead, which come with several major theoretical drawbacks. 
\end{remark}

\section{Concluding Remarks}
In this article, we develop an intrinsic rigidity theory on Riemannian manifolds. This encompasses an analogue of the local rigidity of an intrinsic bar-and-joint framework, as well as an infinitesimal and second-order rigidity theory for these objects. 

For an admissible intrinsic framework $(G,p)$ on a Riemannian manifold $(\M,g)$, the constructions from this article may be summarized as follows:
\begin{align*}
	F_G(p)_{ij}=\frac12\ell_g(p_i,p_j)^2,&\quad
	R_\M(G,p)=d(F_G)_p,\\
	\cK_p=\{(A(p_1),\dots,A(p_n))\,:\,A\in\mathfrak{isom}(\M,g)\},&\quad \mathcal{O}_p=\textsc{Isom}_{\mathrm{id}}(\M,g)\cdot p,\\[4mm]
	\hline\\[-1mm]
	(G,p)\text{ infinitesimally rigid}
	&\Longleftrightarrow
	\ker R_\M(G,p) =\cK_p,\\
	(G,p) \text{ rigid} &\Longleftrightarrow \exists\, U(p)\subset \M^n:~F_G^{-1}(F_G(p))\cap U(p)\subset \mathcal{O}_p\\
	(G,p) \text{ flexible + additional assumptions} & \Longrightarrow \exists \text{ nontrivial motion},\\
	\forall\, \text{nontrivial flex }u~\exists\,\text{self-stress } \omega:~Q_\omega (u,u)\neq 0&\Longleftrightarrow (G,p) \text{ second-order rigid},\\
	\exists\,\text{self-stress } \omega~\forall\, \text{nontrivial flex }u:~Q_\omega (u,u)\neq 0&\Longleftrightarrow (G,p) \text{ prestress stable}.\\[4mm]
	\hline\\[-1mm]
	\text{infinitesimally rigid}
	\Longrightarrow
	\text{prestress stable}
	&\Longrightarrow
	\text{second-order rigid}
	\Longrightarrow
	\text{locally rigid}.
\end{align*}
This formulation is invariant under change of coordinates and, in the Euclidean case, reduces exactly to the classical rigidity theory for bar-and-joint frameworks. In this article, we also separate the genuine Riemannian input, i.e.\ the variation of squared geodesic distance, from the graph-theoretic structure of the rigidity problem.

Since for general manifolds, the Lie group of isometries is usually low-dimensional, the infinitesimal theory is more often than not insufficient. This makes the development of the second-order rigidity theory particularly valuable and we are able to apply this theory to prove the rigidity of multiple examples that are not infinitesimally rigid, yet second-order rigid (see Examples \ref{ex:hyperboloid}, \ref{ex:second-order-comp} and \Cref{section:regular-cube}).

\begin{remark}
Under the square-root transformation, the Fisher probability simplex is isometric to the positive orthant of a round sphere. Small spherical rotations of a finite configuration preserve its intrinsic pairwise distances without necessarily extending to global isometries of the simplex. Consequently, even complete frameworks may be locally flexible under Definition \ref{def:local_rigidity}, further illustrating the distinction between global and local isometries discussed in Remark \ref{rem:local_isometry_group}.
\end{remark}

\subsection{Degenerate metrics}

The preceding definitions require $g$ to be a Riemannian metric, i.e.\ a positive-definite bilinear form on the manifold's tangent bundle.  Suppose instead that a continuously differentiable symmetric tensor $\widetilde g$ is positive definite away from a discrete set and vanishes at these points.  In that case, $\widetilde g$ is a degenerate or singular metric rather than a Riemannian metric.  Several ingredients used above may fail at the degenerate points:
\begin{enumerate}[label=(\roman*)]
 \item The pointwise norm on the tangent space may vanish on nonzero vectors, which may lead to ``invisible'' tangent directions.
 \item The Riemannian logarithm need not be defined by the usual inverse-function argument for the exponential map. It may still be possible to define the Riemannian logarithm via another approach though (see e.g.\ \cite[Def.\ 10.20]{boumal2020intromanifolds} or \Cref{section:generalization}).
 \item The induced distance does not need to be smooth, or even locally comparable to a smooth coordinate distance. Instead, distances may scale like a nonlinear power of the coordinate difference or distinct points may even have zero pseudodistance: For a smooth Riemannian metric, the squared distance $\ell_g(p,q)^2$ behaves quadratically near $p=q$. If the metric degenerates, that quadratic term can disappear in singular tangent directions, so that  $\ell_g(p,q)^2$ behaves like $|x|^{2k}$ for some $k>1$ and the differential $d(F_G)_p$ consequently vanishes to higher order than 1, meaning that the rigidity operator's kernel is not a meaningful object to consider anymore. 
\end{enumerate}
Consequently, the rigidity operator should not be used without further analysis in the degenerate setting.  A natural problem is to develop a higher-order or nonsmooth rigidity theory adapted to such singular metrics.  In particular, if a conformal factor vanishes at the framework vertices, first-order edge constraints may lose rank even when higher-order distance terms are nontrivial.

In this context, an interesting object of study could be \struc{pseudo-Riemannian manifolds}, solely requiring that the corresponding bilinear form is non-degenerate \cite[p.\ 42]{Lee2018}. Several properties of Riemannian manifolds persist: There still exists a unique Levi-Civita connection \cite[p.\ 61]{semiriemanniangeometry}, which enables concepts such as geodesics (cf.\ Definition \ref{def:geodesics}).

Alternatively, one could slightly perturb the metric by setting $\widetilde g_\varepsilon=\widetilde g+\varepsilon\cdot h$ for any Riemannian metric $h$ on $\M$ to study the resulting Riemannian metric $\widetilde g_\varepsilon$. It is likely that this metric behaves similarly to $\widetilde g$ and allows the use of the tools that are developed throughout this article.

\subsection{Generalization of admissible placements}\label{section:generalization}

The constructions from this article also require that the geodesic arcs connecting two vertices are length-minimizing. This assumption has the consequence that the geodesic between two admissible points on the manifold is uniquely determined and that the rigidity operator is completely determined by the vertex positions alone. Ultimately, this lets us define the Riemannian logarithm as the inverse function of the exponential map, which is a convenient tool to work with, as we demonstrate throughout. 

But what if we relaxed the assumption that a placement $p\in \mathcal{M}^n$ is admissible for the graph $G$? In that case, we would allow pairs of points to lie outside of the region $\mathcal{U}_\mathrm{Log}$ for which the exponential map is not necessarily injective, so the Riemannian logarithm is not well-defined. In such points, the Riemannian distance may fail to be differentiable. Consider for instance a great circle on the sphere that is depicted in \Cref{fig:greatcircle}. There are two distinct geodesic arcs between two distinct points on the great circle. However, this 2-gon may still be a reasonable framework to consider and clearly, it is rigid when the points are not antipodal. 

\begin{figure}[h!]
	\includegraphics[width=0.335\linewidth]{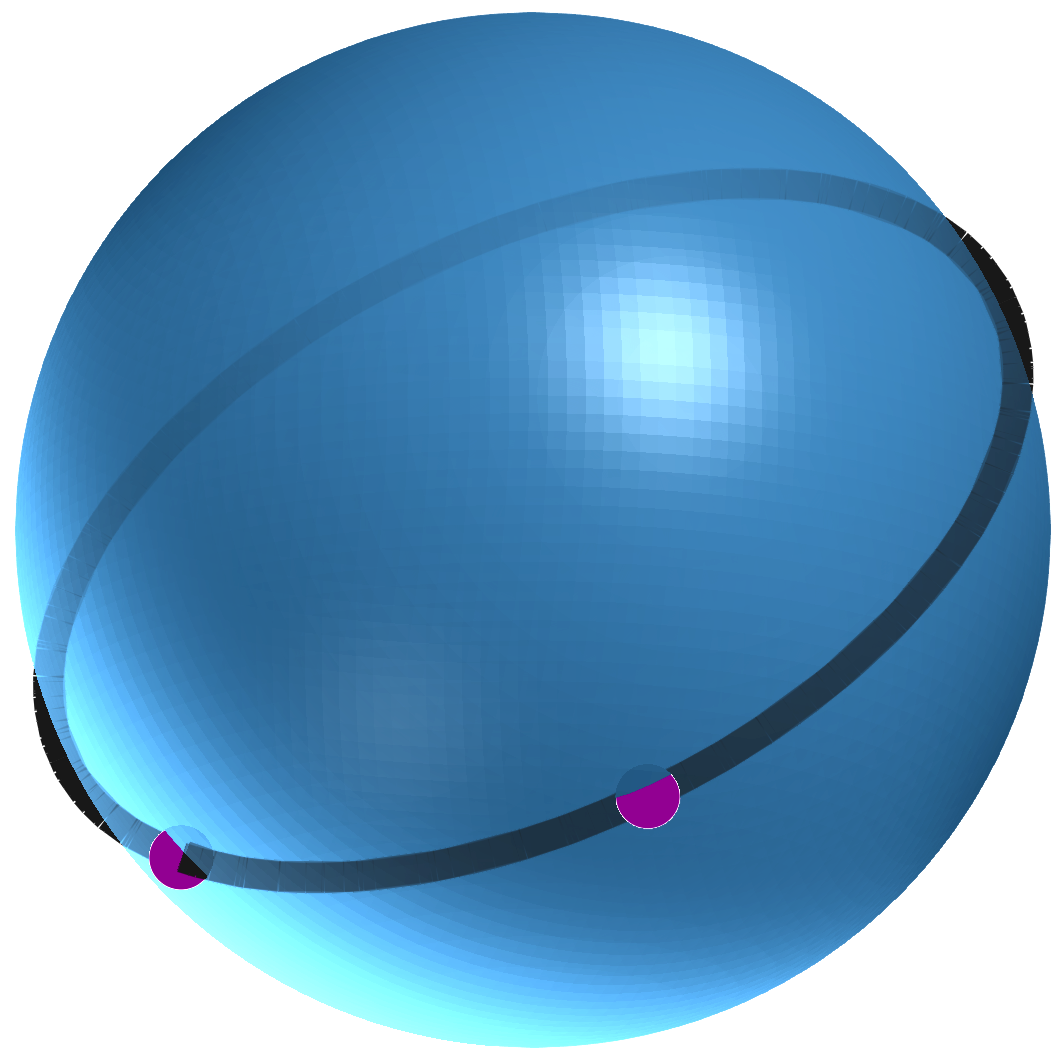}
	\caption{Two different geodesic arcs on the sphere connecting two distinct points.}
	\label{fig:greatcircle}
\end{figure}

For that reason, it is only natural to generalize the theory from this article even further. We could consider multigraphs $G=(V,E)$ with a placement $p:V\rightarrow \M$ so that the multiedges $e\in E$ are given by triples $(i,j,v_{ij})$. In the rigidity theory literature, such graphs are called \struc{gain graphs}, when the edge label $v_{ij}$ describes a certain group action \cite{dewar2026rigiditycompactsurfaceshyperbolic}. 

Here, $i,j\in V$ and $v_{ij}\in T_{p_i}\M$ is the initial velocity of $\gamma_{ij}(t)=\Exp_{p_i}(t\cdot v_{ij})$ so that the geodesic arc $\gamma_{ij}:[0,1]\rightarrow \M$ connects $p_i$ and $p_j$ via $\gamma_{ij}(0)=p_i$ and $\gamma_{ij}(1)=p_j$ with the additional property $\gamma'_{ij}(0)=v_{ij}$. In that case, the geodesic edge length is given by the curve length $L_g(\gamma_{ij})$ instead of the Riemannian distance $\ell_g(p_i,p_j)$. Since geodesics have constant speed, it holds that $L_g(\gamma_{ij})=\|v_{ij}\|_{p_i}$. We could thus more accurately call the graph $G$ a ``geodesically-labeled multigraph''.

Crucially, this construction does not require $\gamma_{ij}$ to be a minimizing geodesic. We can thus weaken the definition of an admissible placement to the invertibility of the differential $d(\Exp_{p_i})_{v_{ij}}$. Then, the Inverse Function Theorem locally guarantees the invertibility of $(x,v)\mapsto (x,\Exp_x(v))$, which provides a locally smooth \struc{branch logarithm} $\mathrm{Log}_{ij}(x,y)$ for the edge $(i,j,v_{ij})$ with the property that $\Exp_x(\mathrm{Log}_{ij}(x,y))=y$ and $\mathrm{Log}_{ij}(p_i,p_j)=v_{ij}$. Contrary to the Riemannian logarithm, the branch logarithm may select a non-minimizing preimage of the exponential map. This lets us define the components of the constraint map as

\[(F_G(q))_{(i,j,v_{ij})} ~=~ \frac{1}{2}\cdot \| \mathrm{Log}_{ij}(q_i,q_j) \|_{q_i}^2.\]

The first-order variation formula for geodesics does not require the geodesics to be minimizing, implying that the rigidity operator has the form

\[\left(d(F_G)_p[u]\right)_{(i,j,v_{ij})}\,=\,-\langle v_{ij},\,u_i\rangle_{p_i} -\langle v_{ji},\,u_j\rangle_{p_j}\]

for $v_{ji}:h=-\gamma'_{ij}(1)$ and $u\in \bigoplus_{i=1}^n T_{p_i}\M$. Notice that 
this expression has the same form as Equation \eqref{eq:row-formula}, only that the Riemannian logarithm $\mathrm{Log}_{p_i}(p_j)$ has been replaced by the branch logarithm $v_{ij}=\mathrm{Log}_{ij}(p_i,p_j)$. Similarly, the Riemannian Hessian can be defined in this setting and conceivably, the local, infinitesimal and second-order constructions from Sections \ref{section:inf_theory}--\ref{section:second-order-theory} can be generalized for the branch logarithm. 

The main disadvantage of this generalization is that the underlying graph needs to take into account the underlying geometry, which means that the rigidity characterization cannot be purely combinatorial and depends on the particular realization. For now, this generalization remains the object of future research.

\section*{Acknowledgements}
I want to thank Klara Stokes for helpful discussions, suggestions and for posing a related problem that motivated me to write this article. I also want to thank Martin Winter for helpful discussions and comments and for his continued interest in this problem. 

\section*{AI Disclosure Statement}
During the preparation of this manuscript, the author used ChatGPT 6 Astra (OpenAI) to assist with drafting parts of the Sections \ref{section:inf_theory}--\ref{section:second-order-theory}. All AI-assisted material was critically reviewed and revised by the author, who takes full responsibility for the content of the manuscript.

\bibliographystyle{alphaurl}
\bibliography{references}

\appendix
	\section{Smooth Manifolds}\label{section:smooth-manifolds}
	We start by introducing a basic definition about the tangent space of smooth manifolds (for a general reference, see e.g.\ \cite{Lee-manifolds}). 
	
	\begin{definition}[Tangent bundle]
		\label{def:tangent-space-and-local-frame}
		Let $\M$ be a smooth manifold. We denote open intervals in $\mathbb{R}$ that contain 0 by $\mathcal{I}_0$. For every point $p\in \mathcal{M}$, consider the set of smooth curves passing through $p$ at $t=0$:
		\[C_p = \{c~\big\lvert ~ c:\mathcal{I}_0\rightarrow \M \text{ is smooth and } c(0)=p\}.\]
		Let $(U,\varphi)$ be a chart of $\M$ around $p$ and consider $c_1,c_2\in C_p$. We define an equivalence relation:
		\[c_1\sim c_2 ~~\Longleftrightarrow ~~ (\varphi \circ c_1)'(0)=(\varphi\circ c_2)'(0).\]
		A \struc{tangent vector at $p$} is an equivalence class $[c]_\sim$ of curves under the equivalence relation $\sim$. 
		The set of all tangent vectors at $p$ is called the \struc{tangent space} $T_p\mathcal{M}=C_p~/\sim$ and is an $\mathbb{R}$-vector space. Its dimension is called the \struc{dimension} $\dim \M$ of $\M$. The disjoint union of all tangent spaces on $\mathcal{M}$ is called the \struc{tangent bundle} $T\mathcal{M} = \bigsqcup_{p\in\mathcal{M}} T_p\mathcal{M}$.
	\end{definition}
	
	\begin{definition}[Local frame]\label{def:local_frame}
		Denote the set of \struc{smooth vector fields} on a manifold $\M$ by $\mathfrak{X}(\M)$:
		\[\mathfrak{X}(\M)~=~\{E:\M\rightarrow T\M~\big\lvert~E \text{ is smooth and } E\lvert_p \,\in T_p\M \text{ for every } p\in\M \}.\]
		A \struc{local frame} for $T\mathcal{M}$ is an ordered collection of smooth vector fields $(E_1,\dots, E_n)$ defined on an open set $U\subset \mathcal{M}$ whose values at each $p\in U$ constitute a basis for $T_p\mathcal{M}$. Given $p\in U$ and a chart $(U,\varphi)$ , we define the \struc{coordinate vector fields}
		\[\partial_i\big\lvert_x ~=~ [t\mapsto \varphi^{-1}(\varphi(x)+t\cdot e_i)]_\sim\]
		for $i=1,\dots, n$ and the standard unit basis $(e_1,\dots, e_n)$ of $\mathbb{R}^n$. We also call these vector fields the \struc{local frame induced by the chart}.
	\end{definition}
	
	Due to the gluing property of charts on a smooth manifold, the coordinate vector fields are smooth. In \Cref{fig:tangent_space}, a visualization of the tangent space to a manifold together with a local frame is provided. It can be thought of as the linearization of the manifold in a given point and we will later see the relation between tangent vectors and velocities of curves on manifolds. 
	
	\begin{figure}[h!]
		\centering
		\includegraphics[width=0.425\linewidth]{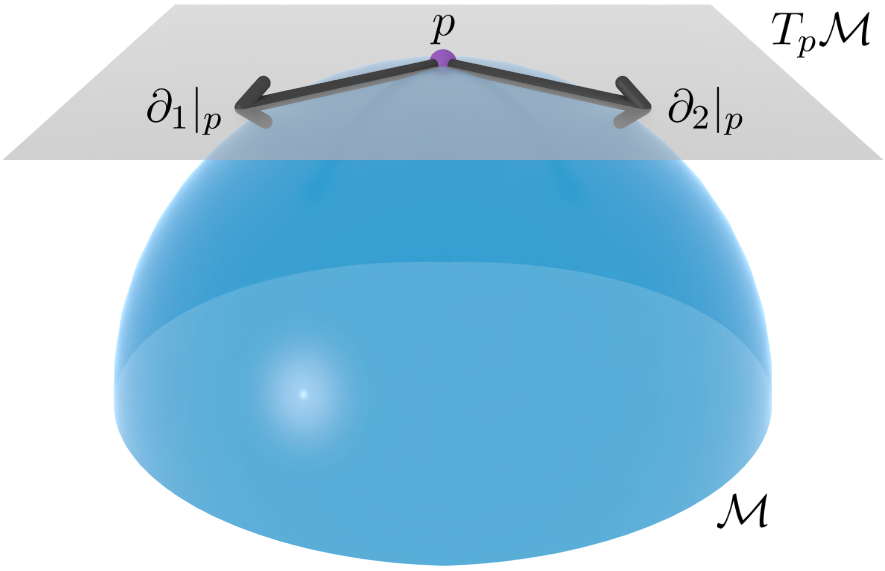}
		\caption{Representation of the tangent space $T_p\mathcal{M}$ at a point $p$ on a manifold $\mathcal{M}$ of dimension 2 together with the induced local frame $(\partial_1,\partial_2)$.}
		\label{fig:tangent_space}
	\end{figure}
	
	Often, a notion of differentiability is useful and we use the concept of a differential in later sections to develop an intrinsic rigidity theory. 
	
	\begin{definition}[Differential]\label{def:differential}
		The \struc{differential} of a smooth function $f:\mathcal{M}\rightarrow \mathbb{R}$ at $p\in \M$ is the linear map $df_p:T_p\M\rightarrow \R$ defined by $df_p(v)=v(f)$. Equivalently, if $v\in T_p\M$ is represented by a smooth curve $c:(-\varepsilon,\varepsilon)\rightarrow \M$ with $c(0)=p$ and $c'(0)=v$ (cf.\ Proposition \ref{prop:tangent_space}), then
		\[df_p(v)=\frac{\partial}{\partial t}\bigg\lvert_{t=0} f(c(t)).\]
	\end{definition}

\section{Riemannian Manifolds}\label{app:riemannian manifolds}
This section introduces several notions from Riemannian geometry. The basic idea of this area of mathematics is to measure local properties on manifolds well-known from Euclidean geometry, such as angles, distances, areas, and curvature. For the necessary differential-geometric background, refer to Appendix \Cref{section:smooth-manifolds}.

\begin{definition}[Riemannian manifold]
	\label{def:riemannian-manifold}
	Let $\mathcal{M}$ be a smooth manifold. Define an inner product $g_p : T_p\mathcal{M}\times T_p \mathcal{M}\rightarrow \mathbb{R}$ for each $p\in \mathcal{M}$. Assume that $g$ is smoothly varying, meaning that for any smooth coordinate chart $(U,x)$ of $\mathcal{M}$ the $(\dim{\mathcal{M}})^2$ functions $g_x( \partial _i\big\lvert_x,\,\partial_j\big\lvert_x): U \rightarrow \mathbb{R}$ are smooth for the induced local frame $(\partial_1,\dots, \partial_n)$. Then, we call $g$ a \struc{Riemannian metric} and the pair $(\mathcal{M}, g)$ is called a \struc{Riemannian manifold}.
\end{definition}
In fact, every smooth manifold admits a Riemannian metric \cite[Prop.\ 2.4]{Lee2018}. Often, we use the notation $\langle \text{-},\text{-}\rangle_p$ for the the Riemannian metric $g_p$ on the tangent space $T_p\mathcal{M}$ and $\|v\|_p$ for the corresponding Riemannian norm given by $\sqrt{\langle v,v\rangle_p}$ with $v\in T_p\M$. An important class of examples for Riemannian manifolds comes from submanifolds of Euclidean space, as they automatically become Riemannian submanifolds by inheriting the metric of the ambient manifold through restriction to the tangent space of the submanifold.

\begin{definition}[Submanifolds]
	\label{def:riemannian-submanifold}
	Suppose $(\mathcal{M},g)$ is a Riemannian manifold and let $\iota: \mathcal{S} \hookrightarrow \mathcal{M}$ be an embedded submanifold. The \struc{pullback metric} $\iota^\star g$ makes $(\mathcal{S},\iota^\star g)$ a Riemannian submanifold. Here, the pullback metric is pointwise defined as the function
	\[(\iota^\star g)_p(X,Y)\, = \,g_p(d\iota_p(X),\,d\iota_p(Y))\]
	for $X,Y\in T_p\mathcal{S}$ with the differential $d\iota_p$ at $p$ mapping 
	$T_pS\rightarrow T_p\mathcal{M}$. 
\end{definition}

Having introduced a differentiable structure on Riemannian (sub-)manifolds, it only seems reasonable to provide analogues to directional derivatives. If the Riemannian metric is the same at every point -- as it is the case in Euclidean space -- we can simply calculate the ordinary gradient and project it onto any submanifold's corresponding tangent space. For general Riemannian metrics, this is not possible, since the local frame generally varies between any two points (cf. Definition \ref{def:tangent-space-and-local-frame}). To make up for this variation, the derivative on Riemannian manifolds needs to carry additional information. This is addressed by defining \struc{connections} $\nabla$ and \struc{covariant derivatives} $\nabla_UV$ along vector fields $U,V\in\mathfrak{X}(\mathcal{M})$ (see \cite[Def. 5.1]{boumal2020intromanifolds}), and by proving that for any Riemannian manifold there exists a unique connection with additional, desirable properties called the \struc{Levi-Civita connection} (cf.\ \cite[Thm. 5.6]{boumal2020intromanifolds}). We can use this theory to define covariant derivatives along curves.

The next step is differentiating vector fields $V\in \mathfrak{X}(c)$ along a curve $c:I\rightarrow \mathcal{M}$, which is a difficult task, because at points $c(t_0)$ and $c(t_1)$ on the curve, the vectors $V(t_0)$ and $V(t_1)$ live in different tangent spaces, so that the difference quotient usually used for defining the derivative does not make sense intrinsically. For that, we define the \struc{covariant derivative along $c$}, which is the unique operator $D_t : \mathfrak{X}(c)\rightarrow \mathfrak{X}(c)$ defined by any connection $\nabla$ which is $\mathbb{R}$-linear and satisfies the Leibniz and Chain rule {\cite[Thm. 5.29]{boumal2020intromanifolds}}. For the Levi-Civita connection $\nabla$, $D_t$ additionally satisfies the product rule. From now on, we will only consider Riemannian manifolds together with their Levi-Civita connection.

A particularly important vector field is given by the tangent vector field of $c$. For that, we define the \struc{velocity} $c'(t)$ at $t$ as the curve's tangent field, i.e. (cf.\ Definition \ref{def:tangent-space-and-local-frame}) \[c'(t)=[s\mapsto c(t+s)]_\sim\in T_{c(t)}\M.\]
The velocity $c'(t)$ consequently is the tangent vector of $c$ at $t$ and we denote the corresponding tangent field by $c'\in \mathfrak{X}(c)$. In the local frame induced by a chart $(U,\varphi)$ that comes with the componentwise maps $\varphi=(\varphi^{(1)},\dots,\varphi^{(n)}):U\rightarrow \mathbb{R}^n$, the tangent vector field is represented by
\[c'(t)=\sum_{i=1}^n\left(\varphi^{(i)}\circ c\right)'(t)\cdot \partial_i\big\lvert_{c(t)}.\]
Staying in line with the physical terminology, we also introduce the acceleration of a curve, which gives rise to a special class of curves.

\begin{definition}[Geodesics]
	\label{def:geodesics}
	Let $c:\mathcal{I}\rightarrow \mathcal{M}$ denote a smooth curve defined on an open interval $\mathcal{I}$. Its \struc{velocity} is the vector field $c'\in \mathfrak{X}(c)$. Its \struc{acceleration} is the smooth vector field $c''\in \mathfrak{X}(c)$ defined by $c''=D_t\,c'$. If $\mathcal{M}$ is a Riemannian manifold with covariant derivative $D_t$ induced by the Levi-Civita connection, a \struc{geodesic} is a smooth curve such that $c''(t)=0$ for all $t\in \mathcal{I}$. A geodesic is called \struc{maximal} if it cannot be extended to a larger interval. A \struc{geodesic arc} is given by $\gamma([a,b])\subset \M$ for a closed interval $[a,b]\subset \mathcal{I}$.
\end{definition} 

\begin{figure}[h!]
	\includegraphics[width=0.35\linewidth]{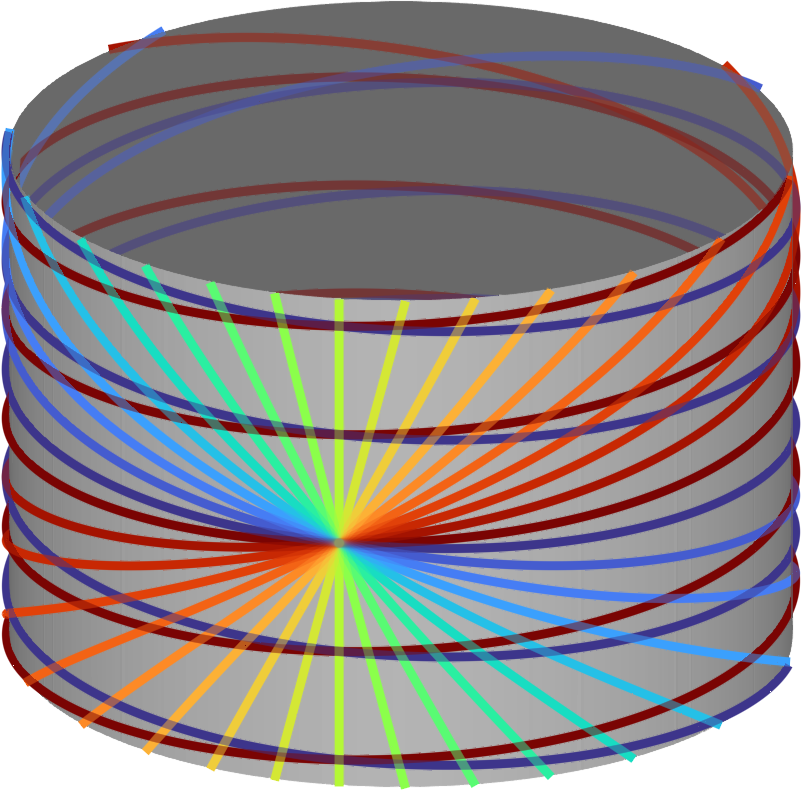}\hspace*{20mm}
	\includegraphics[width=0.35\linewidth]{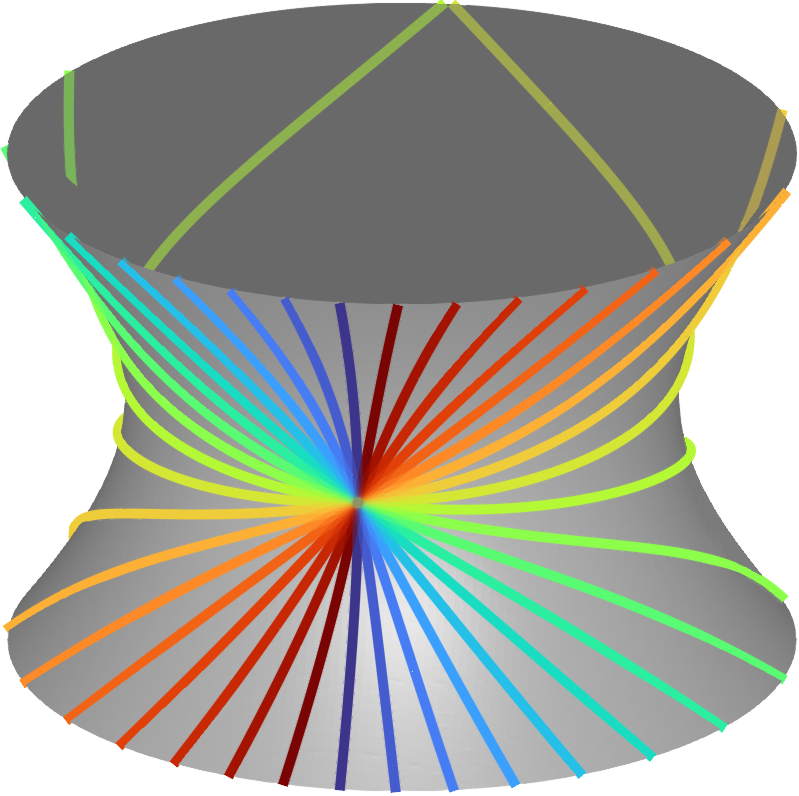}
	\caption{Geodesics through a point on a \textbf{(l.)} cylinder and the \textbf{(r.)} one-sheeted hyperboloid, both equipped with the induced Euclidean metric.}
	\label{fig:geodesics}
\end{figure}

In \Cref{fig:geodesics}, we depict two examples of Riemannian manifolds together with the corresponding geodesics through a fixed point. The cylinder and the one-sheeted hyperboloid are equipped with the Euclidean pullback metric from $\mathbb{R}^3$ (see also \cite[p.\ 70f.]{dissertationhimmelmann}).

Formally, the language of geodesics is not necessary to define the tangent bundle on a manifold (see Definition \ref{def:tangent-space-and-local-frame}). However, it provides a convenient tool to characterize the tangent space, as can be seen in the following.

\begin{proposition}[Characterization of the tangent space]\label{prop:tangent_space}
	Fix a Riemannian manifold $(\mathcal{M},g)$ with Levi-Civita connection $\nabla$. Then, it holds that
	\[T_p\mathcal{M}=\{	c'(0)~\big\lvert ~ c:(-\varepsilon_c, \varepsilon_c)\rightarrow \mathcal{M}\text{ is a geodesic with } c(0)=p \text{ for some } \varepsilon_c>0\}.\] 
	\begin{proof}
		Fix $p\in \mathcal{M}$ and any $v\in T_p\mathcal{M}$. According to  {\cite[Cor.\ 4.28]{Lee2018}} there exists a unique maximal geodesic $\gamma:\mathcal{I}\rightarrow \mathcal{M}$ defined on an open interval $\mathcal{I}\subset \mathbb{R}$ with $0\in \mathcal{I}$ such that $\gamma(0)=p$ and $\gamma'(0)=v$. Fix $\varepsilon>0$ so that $(-\varepsilon,\varepsilon)\subset \mathcal{I}$. By {\cite[Thm.\ 4.27]{Lee2018}}, any geodesic $\Tilde{\gamma}:(-\varepsilon, \varepsilon)\rightarrow \mathcal{M}$ with $\Tilde{\gamma}(0)=p$ and $\Tilde{\gamma}'(0)=v$ agrees with the restriction of $\gamma$ to $(-\varepsilon,\varepsilon)$. Therefore, the tangent space is contained in the set of geodesic velocities. 
		
		For the other inclusion, we first recall {\cite[Thm.\ 4.27]{Lee2018}}, stating that any two geodesics $\gamma, \Tilde{\gamma}$ that pass through $p$ at $t=0$ and have the same velocity at $t=0$ agree on a sufficiently small interval that contains $0$. Hence, the operation $\sim$ is well-defined on the set of geodesic velocities. Since any geodesic is a also smooth curve, the right-hand-side is therefore contained in $T_p\M$. 
	\end{proof}
\end{proposition}

Let us now use the Riemannian metric to define the distance between points on $\mathcal{M}$.

\begin{definition}[Riemannian distance]\label{def:riemannian-distance}
	Let $(\mathcal{M},g)$ be a Riemannian manifold. A smooth curve $\gamma:[a,b]\rightarrow \mathcal{M}$ is \struc{admissible} if there exists a partition of $[a,b]$ into finite sequence of points $a=a_0<a_1<\dots <a_k=b$ so that $\gamma'(t)\neq 0$ for all $a_i<t<a_{i+1}$ and $0\leq i \leq k-1$. For an admissible curve $\gamma$, we define the length of $\gamma$ as 
	\[L_g(\gamma)=\int_{a}^b \|\gamma'(t)\|_{\gamma(t)}\,dt.\]
	The \struc{Riemannian distance} between $p,q\in \mathcal{M}$ is then $\ell_g(p,q)=\inf_\gamma L_g(\gamma)$ where the infimum ranges over all admissible curves.
\end{definition}

It can be shown that every length-minimizing curve is a geodesic \cite[Thm.\ 6.4]{Lee2018}, and conversely, any geodesic locally minimizes the length functional restricted to admissible curves on $\M$ \cite[Thm.\ 6.15]{Lee2018}. However, not all geodesic arcs are minimizers of $L_g(\gamma)$ (see, e.g., \Cref{section:generalization}).

Maximal geodesics can also be used to move on the manifold $\mathcal{M}$, as we show in the following.

\begin{definition}[Exponential map]
	\label{def:exponential-map}
	First, we construct the set \[\mathcal{E}=\{(p,v)\in T\mathcal{M}\,:\,\text{The maximal geodesic }\gamma_v \text{ is defined on an interval containing }[0,1]\}.\] The \struc{exponential map} $\mathrm{Exp}:\mathcal{E}\rightarrow \mathcal{M}$ is then defined by $\mathrm{Exp}(p,v) = \mathrm{Exp}_p(v)=\gamma_v(1)$. The restriction $\mathrm{Exp}_p$ is defined on $\mathcal{E}_p =  \left(\{p\}\times T_p\mathcal{M}\right) \cap \mathcal{E}$. 
\end{definition}

The exponential map is smooth on its domain $\mathcal{E}$, which is an open set in the tangent bundle $T\M$ (cf.\ \cite[Prop.\ 10.17]{boumal2020intromanifolds}) and all tangent space origins are contained in $\mathcal{E}$:
\[\{(x,0)\in T\mathcal{M}~\lvert~x\in \M\}\subset \mathcal{E}.\]
For a point $x\in \M$ and a vector $v\in T_p\M$ with sufficiently short length, the exponential map produces a new point $y=\Exp_p(v) \in \M$. Conversely, given $x,y\in \M$ one may wonder whether it possible to recover the vector $v$. For that, we introduce the following.

\begin{definition}[Injectivity radius {\cite[Def.\ 10.19]{boumal2020intromanifolds}}]\label{def:injectivity-radius}
	The \struc{injectivity radius} $\mathrm{inj}(p)$ of a Riemannian manifold $(\M,g)$ at a point $p\in\M$ is the supremum over radii $r>0$ such that $\mathrm{Exp}_x$ is defined and a diffeomorphism on the open ball $B_x(r)=\{v\in T_x\M~\lvert ~ \|v\|_x<r\}.$
\end{definition}

By the Inverse Function Theorem, it holds that $\mathrm{inj}(x)>0$ \cite[p.\ 255]{boumal2020intromanifolds} and moreover, that $\mathrm{inj}:\M\rightarrow (0,\infty)$ is a continuous function \cite[Prop.\ 10.24]{boumal2020intromanifolds}. For the open ball $U_x=B_x(\mathrm{inj}(x))$ in $T_x\M$, the image $\mathrm{Exp}_x(U_x)$ is a neighborhood of $x$ in $\M$, on which $\mathrm{Exp}_x$ is a diffeomorphism. In particular, this map possesses a smooth inverse function and $v:={\mathrm{Exp}_x}^{-1}(y)$ is the unique shortest tangent vector at $x$ such that $\mathrm{Exp}_x(v)=y$ \cite[p.\ 255]{boumal2020intromanifolds}. This motivates the definition of the Riemannian logarithm at $p\in \mathcal{M}$, which produces an element of the tangent space $T_p\mathcal{M}$.

\begin{definition}[Riemannian logarithm {\cite[Def.\ 10.20]{boumal2020intromanifolds}}]\label{def:riemannian-logarithm}
	For $p\in\M$, the \struc{Riemannian logarithm} is given by
	\[\mathrm{Log}_p(q)\,=\,\mathrm{argmin}_{v\in B_p(\mathrm{inj}(p))} \|v\|_p ~\text{ s.t. }~\mathrm{Exp}_p(v)=q\]
	whose domain is chosen as $\Exp_p(B_p(\mathrm{inj}(p)))\subset \M$. 
\end{definition}

If $\|v\|_x<\mathrm{inj}(x)$, the geodesic $\gamma(t)=\mathrm{Exp}_x(tv)$ on $[0,1]$ is the minimizing curve connecting $x$ to $y=\mathrm{Exp}_x(v)$, which is unique up to reparametrization, with the additional property that $\ell_g(x,y)=\|v\|_x$ and $\mathrm{Log}_x(y)=v$ \cite[Prop.\ 10.22]{boumal2020intromanifolds}.
In particular, let us consider the open set 
\[\mathcal{ID}=\{(x,v)\in T\M~ | ~ \|v\|_x<\mathrm{inj}(x)\}.\]
As previously argued, the exponential map is a diffeomorphism from $\mathcal{ID}$ to 
\begin{eqnarray}\label{eq:U_log}
	\mathcal{U}_{\mathrm{Log}}=\{(x,y)\in \M\times\M ~|~\ell_g(x,y)<\mathrm{inj}(x)\}
\end{eqnarray}
and its inverse $(x,y)\mapsto (x,\mathrm{Log}_x(y))$ is a smooth map from $\mathcal{U}_{\mathrm{Log}}$ to $\mathcal{ID}$ \cite[Cor.\ 10.25]{boumal2020intromanifolds}.

Another useful concept is given by the parallel transport, which yields a way to transfer a tangent vector along a smooth curve to a different tangent space. 

\begin{definition}[Parallel Transport {\cite[Def.\ 10.33--10.35]{boumal2020intromanifolds}}]\label{def:parallel-transport}
	The \struc{parallel transport} along a smooth curve $c:[t_0,t_1]\rightarrow\mathcal{M}$, moving the tangent vectors at $c(t_0)$ to the tangent space at $c(t_1)$ along $c$ is a linear map $\mathcal{T}^c_{t_1\leftarrow t_0}:T_{c(t_0)}\mathcal{M} \rightarrow T_{c(t_1)}\mathcal{M}$ defined by the $\mathcal{T}^c_{t_1\leftarrow t_0}(u)=Z(t_1)$, where $Z\in \mathfrak{X}(c)$ denotes the unique \struc{parallel} vector field such that $Z(t_0)=u$. In other words, it holds that $D_tZ=0$.
\end{definition}

More specifically, $\mathcal{T}$ is the identity when $c$ is constant and preserves inner products between tangent vectors (cf.\ \cite{loclipschitzlinesearch}). 

A particularly important vector field defined along geodesics is given by the following.

\begin{definition}[Jacobi field]\label{def:jacobi-field}
	Let $(\M,g)$ be a Riemannian manifold, let $\gamma$ be a geodesic in $\M$ and let $J$ be a vector field along $\gamma$. For the Levi-Civita connection $\nabla$, we define the \struc{Riemannian curvature tensor} via the equation
	\[R(X,Y)Z=\nabla_X\nabla_Y Z-\nabla_Y\nabla_X Z-\nabla_{[X,Y]}Z\]
	for vector fields $X,Y,Z\in \mathfrak{X}(\M)$ and the \struc{Lie bracket} $[X,Y]=XY-YX$. Then, $J$ is called a \struc{Jacobi field} if it satisfies the \struc{Jacobi equation}
	\[D_t(D_t J)+R(J,\gamma')\gamma'=0.\]
\end{definition}
For a geodesic $\gamma:\mathcal{I}\rightarrow \M$ with $t_0\in \mathcal{I}$, $p=\gamma(t_0)$ every pair of vectors $v,w\in T_p\M$ there exists a unique Jacobi field $J$ (cf.\ \cite[Prop.\ 10.2]{Lee2018}) which satisfies 
\[J(t_0)=v\quad\text{ and }\quad D_t J(t_0)=w.\]
Moreover, if $0\in \mathcal{I}$ and $\gamma(0)=p$, then for every $w\in T_p\M$, the Jacobi field $J$ such that $J(0)=0$ and $D_tJ(0)=w$ is given by
\[J(t)=d(\Exp_p)_{tv}(tw),\]
where $v=\gamma'(0)$ \cite[Prop.\ 10.10]{Lee2018}. 

We also define the concept of a gradient and of a Hessian on Riemannian manifolds (cf.\ {\cite[Def.\ 8.38 and 8.65]{boumal2020intromanifolds}}). In fact, Jacobi fields provide a useful tool for studying the Riemannian Hessian. 

\begin{definition}[Riemannian gradient and Hessian]\label{def:hessian}
	Let $f:\mathcal{M}\rightarrow \mathbb{R}$ be a smooth function on a Riemannian manifold $(\mathcal{M},\langle \text{-},\text{-}\rangle)$. The \struc{Riemannian gradient} $\mathrm{grad}\, f$ is the unique vector field on $\mathcal{M}$ so that 
	\[(f\circ c)'(0)=\langle v,\,\mathrm{grad}\, f\rangle_x\]
	for any smooth curve $c:(-\varepsilon,\varepsilon)\rightarrow \M$ with $c'(0)=v$ and all $(x,v)\in T\M$.
	
	The \struc{Riemannian Hessian}  is the bilinear form $\mathrm{Hess}\,f(x):T_x\mathcal{M}\times T_x\M\rightarrow \mathbb{R}$ defined by 
	\[\mathrm{Hess}\,f(x)[u,v]=\big\langle\nabla_u\mathrm{grad}\,f(x),\,v\rangle_{x}\]
	where $\nabla$ is the Levi-Civita connection on $\M$.
\end{definition}

The Riemannian gradient and Hessian, for instance, find application in Riemannian optimization, as they provide an analogue for the Euclidean gradient and Hessian and can thus be used in local optimization schemes on Riemannian manifolds (cf.\ \cite[§4 and §6]{boumal2020intromanifolds}).

As the last theoretical concept from differential geometry, we define isometries, which will enable us to define a notion of congruence for geometric objects on Riemannian manifolds.

\begin{definition}[Isometries]\label{def:isometries}
	Let $(\M,g)$ and $(\widetilde{\M},\widetilde{g})$ be Riemannian manifolds. An \struc{isometry from $(\M,g)$ to $(\widetilde{\M},\widetilde{g})$} is a diffeomorphism $\varphi: \M\rightarrow \widetilde{\M}$ such that the corresponding pullback metric satisfies $\varphi^*\widetilde{g}=g$. The \struc{set of isometries of $\mathcal{M}$} is then defined as \[\textsc{Isom}(\mathcal{M},g)=\{\varphi:\mathcal{M}\rightarrow\mathcal{M}~\lvert~\varphi \text{ diffeomorphism and } \varphi^*g=g \}.\]
\end{definition}

The set of isometries forms a Lie group and acts properly on $\M$ (cf.\ \cite[Thm.\ 9]{MyersSteenrod1939}). The corresponding \struc{Lie algebra} is denoted by $\mathfrak{isom}(\M,g)$ and contains the infinitesimal isometries of $\M$. Formally, it is given by the tangent space at the identity element $\mathrm{id}$ of the Lie group: $\mathfrak{isom}(\M,g)=T_{\mathrm{id}}\textsc{Isom}(\M,g)$. The dimension of the Lie algebra is at most $\frac{1}{2}n(n+1)$, where $n=\dim\M$ (cf.\ \cite[Thm.\ 3.3]{kobayashinomizu}).

Every element $A\in \mathfrak{isom}(\M,g)$ determines a unique one-parameter subgroup parametrized by 
\[\Phi^A:\mathbb{R}\rightarrow \textsc{Isom}(\M,g),\quad t\mapsto \Phi^A_t\] satisfying $\Phi_0^A=\mathrm{id}$ and $\Phi^A_{s+t}=\Phi^A_{s}\circ\Phi^A_{t}$ for all $s,t\in \mathbb{R}$, namely the exponential map \mbox{$\Phi^A_t=\mathrm{Exp}_{\textsc{Isom}(\M,g)}(tA)$} of the Lie group $\textsc{Isom}(\M,g)$ (cf.\ \cite[Prop.\ 20.8]{Lee-manifolds}). This map has the property that $\frac{\partial}{\partial t}\Phi^A_t\big\lvert_{t=0}=A$.

\section{Explicit prestress stability computation for the regular cube}\label{section:regcube_computations}
Let $S^2\subset\R^3$ be the unit sphere, with cube vertices
\begin{align*}
	p_1&=( 1, 1, 1)/\sqrt3,
	& p_2&=(-1, 1, 1)/\sqrt3,\\
	p_3&=( 1,-1, 1)/\sqrt3,
	& p_4&=(-1,-1, 1)/\sqrt3,\\
	p_5&=( 1, 1,-1)/\sqrt3,
	& p_6&=(-1, 1,-1)/\sqrt3,\\
	p_7&=( 1,-1,-1)/\sqrt3,
	& p_8&=(-1,-1,-1)/\sqrt3.
\end{align*}
Order the twelve edges by
\[
E=(12,34,56,78,13,24,57,68,15,26,37,48).
\]
For $p_i=(a_i,b_i,c_i)/\sqrt3$, with $a_i,b_i,c_i\in\{-1,1\}$,
choose the \emph{orthogonal but not normalized} tangent basis
\[
e_i^{(1)}=(b_i,-a_i,0),
\qquad
e_i^{(2)}=(a_ic_i,b_ic_i,-2).
\]
Write each tangent velocity as
\[
u_i=x_i e_i^{(1)}+y_i e_i^{(2)},\qquad
\mathbf{u}=(x_1,y_1,x_2,y_2,\ldots,x_8,y_8)^\top.
\]

\subsection{Rigidity matrix}\label{section:cube:rig_matrix}
All cube edges have spherical length
\[
\theta=\arccos\left(\frac13\right),
\qquad
\sin\theta=\frac{2\sqrt2}{3}.
\]
For an edge $ij$, the intrinsic first-variation formula gives
\[
(R_{\mathbb{S}^2}(G,p)u)_{ij}
=-\frac{\theta}{\sin\theta}
\bigl(p_j\cdot u_i+p_i\cdot u_j\bigr).
\]
Consequently, in the specified tangent-coordinate and edge ordering,
\[
R_{\mathbb{S}^2}(G,p)=-\sqrt{\frac32}\,\theta\,B,
\]
where the following integer matrix is displayed at a reduced size
to fit the page:
\[
\resizebox{\textwidth}{!}{$
	\displaystyle
	B=
	\left(
	\begin{array}{rrrrrrrrrrrrrrrr}
		-1&-1& 1&-1& 0& 0& 0& 0& 0& 0& 0& 0& 0& 0& 0& 0\\
		0& 0& 0& 0& 1&-1&-1&-1& 0& 0& 0& 0& 0& 0& 0& 0\\
		0& 0& 0& 0& 0& 0& 0& 0&-1& 1& 1& 1& 0& 0& 0& 0\\
		0& 0& 0& 0& 0& 0& 0& 0& 0& 0& 0& 0& 1& 1&-1& 1\\
		1&-1& 0& 0&-1&-1& 0& 0& 0& 0& 0& 0& 0& 0& 0& 0\\
		0& 0&-1&-1& 0& 0& 1&-1& 0& 0& 0& 0& 0& 0& 0& 0\\
		0& 0& 0& 0& 0& 0& 0& 0& 1& 1& 0& 0&-1& 1& 0& 0\\
		0& 0& 0& 0& 0& 0& 0& 0& 0& 0&-1& 1& 0& 0& 1& 1\\
		0& 2& 0& 0& 0& 0& 0& 0& 0&-2& 0& 0& 0& 0& 0& 0\\
		0& 0& 0& 2& 0& 0& 0& 0& 0& 0& 0&-2& 0& 0& 0& 0\\
		0& 0& 0& 0& 0& 2& 0& 0& 0& 0& 0& 0& 0&-2& 0& 0\\
		0& 0& 0& 0& 0& 0& 0& 2& 0& 0& 0& 0& 0& 0& 0&-2
	\end{array}
	\right)
	$}
\]
The column order is $(x_1,y_1,\ldots,x_8,y_8)$, and the rows follow
the displayed edge order. A symbolic row reduction yields
\[
\operatorname{rank}B=11,
\qquad
\dim\ker B=16-11=5.
\]
\subsection{Infinitesimal flexes and self-stresses}\label{section:cube_inf_flexes}
A basis of the kernel is given by the columns of the $16\times5$ matrix
\[
K=\left(
\begin{array}{rrrrr}
	1& 1&1& 1& 0\\
	-1& 1&0& 0& 1\\
	-1& 1&1& 1& 0\\
	-1&-1&0& 0&-1\\
	1&-1&1& 1& 0\\
	1& 1&0& 0&-1\\
	-1&-1&1& 1& 0\\
	1&-1&0& 0& 1\\
	-1&-1&1&-1& 0\\
	-1& 1&0& 0& 1\\
	1&-1&1&-1& 0\\
	-1&-1&0& 0&-1\\
	-1& 1&1&-1& 0\\
	1& 1&0& 0&-1\\
	1& 1&1&-1& 0\\
	1&-1&0& 0& 1
\end{array}
\right)
=\begin{pmatrix} k_x&k_y&k_z&f_1&f_2\end{pmatrix}.
\]
Thus,
\[
BK=0,\qquad \operatorname{rank}K=5,\qquad
\ker R_{\mathbb{S}^2}(G,p)=\operatorname{im}K.
\]
A compact description of these columns is
\begin{align*}
	k_x &: (x_i,y_i)=(a_i c_i,-b_i),\\
	k_y &: (x_i,y_i)=(b_i c_i,a_i),\\
	k_z &: (x_i,y_i)=(1,0),\\
	f_1 &: (x_i,y_i)=(c_i,0),\\
	f_2 &: (x_i,y_i)=(0,a_i b_i).
\end{align*}
The first three vectors are infinitesimal ambient rotations
(up to scaling). Therefore the trivial infinitesimal flex space is given by
\[
\mathcal{K}_p=\operatorname{span}\{k_x,k_y,k_z\},
\]
and a complementary nontrivial flex space is
\[
\ker R_{S^2}(G,p)
=\mathcal{K}_p\oplus\operatorname{span}\{f_1,f_2\}.
\]
The (left) nullspace is one-dimensional:
\[
	\ker R_{\mathbb{S}^2}(G,p)^*
	=\operatorname{span}\{\omega\},
	\qquad
	\omega=
	(\begin{array}{*{12}{c}}
		1&1&1&1&1&1&1&1&1&1&1&1
	\end{array})^{\!\top}.
\]
Equivalently, $B^\top\omega=0$.
Although the tangent bases are not orthonormal, the self-stress
condition is still the ordinary left-nullspace condition,
because the metric on tangent-coordinate space is nondegenerate.
In summary,
\[
	\operatorname{rank}B=11,\quad
	\dim\ker B=5,\quad
	\dim\ker B^\top=1.
\]

\subsection{The stress Hessian}\label{section:cube:edge_hessian}
	The spherical edge Hessians $H_{ij}$ for $ij\in E$ are derived in Equation \eqref{eq:spherical_edge_hessian}. This expression can be further simplified for our example by observing that $\langle p_i,p_j\rangle=\frac{1}{3}$ for the cube's edges $ij$ so that $\frac{\arccos(\langle p_i,\, p_j\rangle)}{\sin(\arccos(\langle p_i,\, p_j\rangle))}=\arccos\left(\frac{1}{3}\right)\cdot\frac{3}{2\sqrt{2}}$. Consequently, for a general nontrivial flex $u=\lambda_1\cdot u^{(1)}+\lambda_2\cdot u^{(2)}$ for $u^{(1)}=\frac{f_1-f_2}{6}$ and $u^{(2)}=\frac{f_1+f_2}{6}$ and the self-stress $\omega$, the stress Hessian $Q_\omega$ has the form
	\begin{eqnarray*}
		\frac{2\sqrt{2}}{\arccos\left(\frac{1}{3}\right)\cdot 3} \cdot Q_\omega(u,u)&=&\frac{2\sqrt{2}}{\arccos\left(\frac{1}{3}\right)\cdot 3}\cdot \sum_{ij\in E}\omega_{ij}\cdot H_{ij}(u,u)\\ 
		&=& \frac{2\sqrt{2}}{\arccos\left(\frac{1}{3}\right)\cdot 3}\cdot \sum_{ij\in E} H_{ij}(u,u)\\
		&=&\sum_{ij\in E} \left(\frac{1}{3}\left( \|u_i\|^2+\|u_j\|^2\right)-2\langle u_i, u_j\rangle \right)\\
		&=& \sum_{i=1}^n\|u_i\|^2-2\sum_{ij\in E} \langle u_i,u_j\rangle~=~(*)\\
	\end{eqnarray*}
	Now, we insert the parametric expression for $u$ to obtain 
	\begin{eqnarray*}
		(*)&=& \sum_{i=1}^n\|\lambda_1 u_i^{(1)}+\lambda_2 u_i^{(2)}\|^2 -  2\sum_{ij\in E} \langle \lambda_1 u_i^{(1)}+\lambda_2 u_i^{(2)},~\lambda_1 u_j^{(1)}+\lambda_2 u_j^{(2)}\rangle  \\
		&=&  \frac{16}{9}\left({\lambda_1}^2-\lambda_1\lambda_2+{\lambda_2}^2\right)+{2\cdot \frac{8}{9}} \left({\lambda_1}^2-\lambda_1\lambda_2+{\lambda_2}^2\right)\\
		&=& \frac{32}{9} \cdot \left[\left(\lambda_1-\frac{\lambda_2}{2}\right)^2+\frac{3}{4}{\lambda_2}^2\right].
	\end{eqnarray*}
\end{document}